\documentclass[reqno]{amsart}

\usepackage{amsmath}
\usepackage{amssymb}
\usepackage{amsthm}
\usepackage{dsfont}
\overfullrule=1pt

\numberwithin{equation}{section}

\newtheorem{theorem}{Theorem}[section]
\newtheorem{proposition}[theorem]{Proposition}
\newtheorem{corollary}[theorem]{Corollary}
\newtheorem{lemma}[theorem]{Lemma}

\theoremstyle{definition}

\theoremstyle{definition} 
\newtheorem{remark}[theorem]{Remark}





\newcommand{\beq}{\begin{equation}}
\newcommand{\eeq}{\end{equation}}
\newcommand{\bea}{\begin{eqnarray}}
\newcommand{\eea}{\end{eqnarray}}

\newcommand{\rr}{\mathbb R}

\newcommand{\R}{\mathbb{R}}
\newcommand{\K}{\mathbb{K}}

\newcommand{\EE}{\mathbb E}
\newcommand{\FF}{\mathbb F}
\newcommand{\LL}{\mathbb L}
\newcommand{\NN}{\mathbb N}

\newcommand{\cA}{{\mathcal A}}
\newcommand{\cB}{{\mathcal B}}

\newcommand{\cD}{{\mathcal D}}
\newcommand{\cE}{{\mathcal E}}

\newcommand{\cG}{{\mathcal G}}
\newcommand{\cL}{{\mathcal L}}
\newcommand{\cP}{{\mathcal P}}
\newcommand{\cR}{{\mathcal R}}

\newcommand{\cMH}{{\mathcal MH}}
\newcommand{\cPM}{{\mathcal PM}}

\newcommand{\cN}{{\mathcal N}}
\newcommand{\q}{q}

\newcommand{\cxx}{\sf x}
\newcommand{\cy}{\sf y}



\newcommand{\Li}{\mathcal L}

\newcommand{\compemb}{\lhook\hspace{-0.151cm}{-}\hspace{-0.3cm}\hookrightarrow}

\newcommand{\ve}{\varepsilon}

\DeclareMathSymbol{\complement}{\mathord}{AMSa}{"7B}


\def\vv<#1>{\langle #1\rangle}
\def\Vv<#1>{\bigl\langle #1\bigr\rangle}


\begin{document}
\setlength{\arraycolsep}{0.5ex}

\title[Two-Phase Navier-Stokes Equations]
{Qualitative Behaviour of Solutions\\ for the
Two-Phase Navier-Stokes Equations\\ with  Surface Tension }

\author[Matthias K\"ohne]{Matthias K\"ohne}
\address{Center for Smart Interfaces \& International Research Training Group "Mathematical Fluid Dynamics"\\
          Technical University Darmstadt\\
         Petersenstr.~32\\
         D-64287 Darmstadt, Germany}
\email{koehne@csi.tu-darmstadt.de}

\author[Jan Pr\"uss]{Jan Pr\"uss}
\address{Institut f\"ur Mathematik  \\
         Martin-Luther-Universit\"at Halle-Witten\-berg\\
         Theodor-Lieser-Str.~5\\
         D-06120 Halle, Germany}
\email{jan.pruess@mathematik.uni-halle.de}

\author[Mathias Wilke]{Mathias Wilke}
\address{Institut f\"ur Mathematik  \\
         Martin-Luther-Universit\"at Halle-Witten\-berg\\
         Theodor-Lieser-Str.~5\\
         D-06120 Halle, Germany}
\email{mathias.wilke@mathematik.uni-halle.de}

\dedicatory{In memory of Professor Tetsuro Miyakawa\ \dag}

\begin{abstract}
The two-phase free boundary value problem for the isothermal Navier-Stokes system
is studied for general bounded geometries in absence of phase transitions,
external forces and boundary contacts. It is shown that the problem is
well-posed in an $L_p$-setting, and that it generates a local semiflow
on the induced phase manifold.
If the phases are connected, the set of equilibria of the system forms a
$(n+1)$-dimensional manifold,
each equilibrium is stable, and it is shown that global solutions
which do not develop singularities converge to an equilibrium as time
goes to infinity. The latter is proved by means of the energy functional combined with the {\it generalized
principle of linearized stability}.
\end{abstract}
\maketitle
{\small\noindent
{\bf Mathematics Subject Classification (2000):}\\
Primary: 35R35, Secondary: 35Q30, 76D45, 76T10.\vspace{0.1in}\\
{\bf Key words:} Two-phase Navier-Stokes equations, surface tension,
well-posedness, stability, compactness, generalized principle of linearized stability, convergence. \vspace{0.1in}\\
\date{today}}

\section{Introduction}\label{sect-intro}
In this paper we consider a free boundary problem
that describes the motion of two isothermal, viscous, incompressible
 Newtonian fluids in $\R^3$.
The fluids are separated by an interface
that is unknown and has to be determined as part of the problem.

More precisely, we consider two fluids that fill a region $\Omega\subset\R^3$.
Let $\Gamma_0\subset\Omega$ be a given surface which bounds
the region $\Omega_{1}(0)$ occupied by a viscous incompressible
fluid, $fluid_{1}$, the {\em dispersed phase}
and let $\Omega_{2}(0)$ be the complement of
the closure of $\Omega_{1}(0)$ in $\Omega$, corresponding
to the region occupied by a second incompressible viscous fluid,
$fluid_{2}$, the {\em continuous phase}. Note that the dispersed phase
is assumed not to be in contact with the boundary $\partial\Omega$ of $\Omega$.
We assume that the two fluids are immiscible, and that no phase transitions occur.
The velocity of the fluids is
denoted by $u(t,x)$, and the pressure field by $\pi(t,x)$.

Let $\Gamma(t)$ denote the position of $\Gamma_{0}$
at time $t$. Thus, $\Gamma(t)$ is a sharp interface
which separates the fluids occupying the
regions $\Omega_1(t)$ and $\Omega_2(t)$, respectively.
We denote the normal field on $\Gamma(t)$,
pointing from $\Omega_1(t)$ into $\Omega_2(t)$, by $\nu_\Gamma(t,\cdot)$.
Moreover,
$V_\Gamma(t,\cdot)$ and $H_\Gamma(t,\cdot)$ mean the normal velocity and
the curvature of $\Gamma(t)$ with respect to $\nu_\Gamma(t,\cdot)$,
respectively. Here the curvature $H_\Gamma:=-{\rm div}_\Gamma \nu_\Gamma$ is  negative
when $\Omega_1(t)$ is convex in a neighborhood of $x\in\Gamma(t)$, in particular the
curvature of a sphere $S_R(x_0)$ is $-(n-1)/R$.
The motion of the  fluids is governed by the following system of
equations for $i=1,2.$
\begin{equation}
\label{NS-2phase}
\begin{aligned}
\rho_i\big(\partial_tu+(u|\nabla)u\big)
                   -\mu_i\Delta u+\nabla \pi
                       & = 0 &\ \hbox{in}\quad &\Omega_i(t),\\
    {\rm div}\,u & = 0 &\ \hbox{in}\quad &\Omega_i(t), \\
           -{[\![S(u,\pi)\nu_\Gamma]\!]}& = \sigma H_\Gamma \nu_\Gamma &\
\hbox{on}\quad &\Gamma(t),\\
            {[\![u]\!]}& = 0 &\ \hbox{on}\quad &\Gamma(t), \\
u & = 0 &\ \hbox{on}\quad & \partial\Omega, \\
                      V_\Gamma& = (u| \nu_\Gamma) &\ \hbox{on}\quad &\Gamma(t),\\
             u(0)& = u_0&\ \hbox{in}\quad &\Omega\backslash\Gamma_0,\\
              \Gamma(0)& = \Gamma_0\,.\\
\end{aligned}
\end{equation}
Here, $S$ is the stress tensor defined by
\begin{equation*}
S=S(u,\pi)=\mu_i\big(\nabla u+[\nabla u]^{\sf T}\big)-\pi I =2\mu_i E -\pi I
\quad \text{in}\quad \Omega_i(t),
\end{equation*}
and
\begin{equation*}
[\![\phi]\!](t,x)=\lim_{h \rightarrow 0+}\Big(\phi(t,x + h\nu_\Gamma(x)) - \phi(t,x - h\nu_\Gamma(x))\Big), \quad x \in \Gamma(t)
\end{equation*}
denotes the jump of the quantity $\phi$, defined on the respective
domains $\Omega_i(t)$, across the interface $\Gamma(t)$.
\smallskip\\
Given are the initial position $\Gamma_{0}$
of the interface and the initial velocity
$u_0:\Omega\backslash\Gamma_0\to \R^{3}$.
The unknowns are the velocity field
$u(t,\cdot):\Omega\backslash\Gamma(t)\to \R^{3}$,
the pressure field $\pi(t,\cdot):\Omega\backslash\Gamma(t)\to \R$,
and the free boundary $\Gamma(t)$.
\\
The constants $\rho_i>0$ and $\mu_i>0$ denote the densities
and the viscosities of the respective fluids,
and the constant $\sigma>0$ stands for the surface tension. In the sequel
we drop the index $i$ since there is no danger of confusion; however, we keep in mind
that $\mu$ and $\rho$ have jumps across the interface, in general.

System \eqref{NS-2phase} comprises the
{\em two-phase Navier-Stokes equations with surface tension}.
The corresponding one-phase problem is obtained by setting $\rho_2=\mu_2=0$
and discarding $\Omega_2$. Here we concentrate the discussion on
the two-phase problem.

There are several papers in the literature dealing with problem  \eqref{NS-2phase};
cf.\ \cite{Deni91,Deni93,Deni94,DS91,DS95,ShSh09,Tanaka93b,Tanaka93}. All of them employ
Lagrangian coordinates to obtain local well-posedness. This way it seems difficult
to establish smoothing of the unknown interface, and this method is hardly useful
in case phase transitions have to be taken into account.
Here we employ a different approach, namely the  {\em Direct Mapping Method} via a {\em Hanzawa transform},
which has been quite efficient in the study of Stefan problems,
i.e.\ phase transitions involving temperature, only.

In a recent paper \cite{PrSi09a} we have shown that
problem \eqref{NS-2phase} is locally well-posed
in an $L_p$-setting provided $\Omega=\R^n$ and the initial interface $\Gamma_0$
is sufficiently close to a flat configuration. In addition, the interface as well as
the solution are proved to become instantaneously real analytic. This result is based
on a careful analysis of the underlying linear problem. Building on the latter results
we show in this paper local well-posedness for  arbitrary bounded geometries
as described above. This induces a local semiflow on a well-defined
nonlinear phase manifold.

It is known that the set $\cE$ of equilibria of the system are zero velocities,
constant pressures in the components of the phases and the dispersed phase is a
union of disjoint balls.
Concentrating on the case of connected phases, we prove that equilibria are stable
and any solution starting in
a neighbourhood of such a steady state exists globally and converges to another equilibrium.

The energy of the system serves as a strict Ljapunov functional, hence the limit sets
of the solutions are contained in the set of equilibria $\cE$. Combining these results
we show that any solution which does not develop singularities converges
to an equilibrium  in the topology of the phase manifold.

\section{ Transformation to a Fixed Domain}\label{sect-tfd}
Let $\Omega\subset\R^n$ be a bounded domain with boundary $\partial\Omega$ of class $C^2$, and suppose
$\Gamma\subset\Omega$ is a hypersurface of class $C^2$,
i.e.\ a $C^2$-manifold which is the boundary of a bounded domain
$\Omega_1\subset\Omega$; we then set
$\Omega_2=\Omega\backslash\bar{\Omega}_1$. Note that $\Omega_2$ is connected,
but $\Omega_1$ maybe  disconnected, however, it consists of finitely
many components only, since $\partial\Omega_1=\Gamma$ by assumption is a manifold, at least  of class $C^2$.
Recall that the {\em second order bundle} of $\Gamma$ is given by
$$\cN^2\Gamma:=\{(p,\nu_\Gamma(p),\nabla_\Gamma\nu_\Gamma(p)):\, p\in\Gamma\}.$$
Here $\nabla_\Gamma$ denotes the surface gradient on $\Gamma$. Recall also the
{\em Haussdorff distance} $d_H$ between the two closed subsets $A,B\subset\R^m$,
defined by
$$d_H(A,B):= \max\{\sup_{a\in A}{\rm dist}(a,B),\sup_{b\in B}{\rm dist}(b,A)\}.$$
Then we may approximate $\Gamma$ by a real analytic hypersurface $\Sigma$,
in the sense that the Haussdorff distance of the second order bundles of
$\Gamma$ and $\Sigma$ is as small as we want. More precisely, for each $\eta>0$ there is a
real analytic closed hypersurface $\Sigma$ such that
$d_H(\cN^2\Sigma,\cN^2\Gamma)\leq\eta$. If $\eta>0$ is small enough, then $\Sigma$
bounds a domain $G_1$ with  $\overline{G_1}\subset\Omega$, and we set $G_2=\Omega\backslash\bar{G}_1$.

It is well known that such a hypersurface $\Sigma$ admits a tubular neighbourhood,
which means that there is $a>0$ such that the map
\begin{equation*}
\Lambda: \Sigma \times (-a,a)\to \R^n, \quad \quad
\Lambda(p,r):= p+r\nu_\Sigma(p), \quad p \in \Sigma,\ |r| < a
\end{equation*}
is a diffeomorphism from $\Sigma \times (-a,a)$
onto $\cR(\Lambda)$. The inverse
$$\Lambda^{-1}:\cR(\Lambda)\mapsto \Sigma\times (-a,a)$$ of this map
is conveniently decomposed as
$$\Lambda^{-1}(x)=(\Pi(x),d(x)),\quad x\in\cR(\Lambda).$$
Here $\Pi(x)$ means the orthogonal projection of $x$ to $\Sigma$ and $d(x)$ the signed
distance from $x$ to $\Sigma$; so $|d(x)|={\rm dist}(x,\Sigma)$ and $d(x)<0$ iff
$x\in G_1$. In particular we have $\cR(\Lambda)=\{x\in \R^n:\, {\rm dist}(x,\Sigma)<a\}$.

Note that on the one hand, $a$ is determined by the curvatures of $\Sigma$, i.e.\ we must have
$$0<a<\min\{1/|\kappa_j(p)|: j=1,\ldots,n-1,\; p\in\Sigma\},$$
where $\kappa_j(p)$ mean the principal curvatures of $\Sigma$ at $p\in\Sigma$.
But on the other hand, $a$ is also connected to the topology of $\Sigma$,
which can be expressed as follows. Since $\Sigma$ is a compact manifold of
dimension $n-1$ it satisfies the ball condition, which means that
there is a radius $r_\Sigma>0$ such that for each point $p\in \Sigma$
there are $x_j\in G_j$, $j=1,2$, such that $B_{r_\Sigma}(x_j)\subset G_j$, and
$\bar{B}_{r_\Sigma}(x_j)\cap\Sigma=\{p\}$. Choosing $r_\Sigma$ maximal,
we then must also have $a<r_\Sigma$.

Setting $\Gamma(0)=\Gamma_0$, we may use the map $\Lambda$ to parametrize the unknown free
boundary $\Gamma(t)$ over $\Sigma$ by means of a height function $h$ via
$$\Gamma(t) = \cR(p\mapsto p+ h(t,p)\nu_\Sigma(p),\quad p\in\Sigma),\quad t\geq0,$$
for small $t\geq0$, at least.
Extend this diffeomorphism to all of $\bar{\Omega}$ by means of
$$ \Theta_h(t,x) = x +\chi(d(x)/a)h(t,\Pi(x))\nu_\Sigma(\Pi(x))=:x+\theta_h(t,x).$$
Here $\chi$ denotes a suitable cut-off function; more precisely, $\chi\in\cD(\R)$,
$0\leq\chi\leq 1$, $\chi(r)=1$ for $|r|<1/3$, and $\chi(r)=0$ for $|r|>2/3$.
This way $\Omega\backslash\Gamma(t)$ is transformed to the fixed domain
$\Omega\backslash\Sigma$. Note that $\Theta_h(t,x)=x$ for $|d(x)|>2a/3$, and
$$\Theta_h^{-1}(t,x)= x-h(t,\Pi(x))\nu_\Sigma(\Pi(x))\quad \mbox{ for }\; |d(x)|<a/3,$$
in particular,
$$\Theta_h^{-1}(t,x)= x-h(t,x)\nu_\Sigma(x)\quad \mbox{ for }\; x\in\Sigma.$$
Now we define the transformed quantities
\begin{eqnarray*}&&\bar{u}(t,x)= u(t,\Theta_h(t,x)),\\
&& \bar{\pi}(t,x)=\pi(t,\Theta_h(t,x)),\quad t>0,\; x\in\Omega\backslash\Sigma,
\end{eqnarray*}
the {\em pull backs} of $u$ and $\pi$.
This gives the following problem for $\bar{u},\bar{\pi},h$.
\begin{equation}\label{tfbns1}
\begin{array}{rcll}
\rho\partial_t \bar{u} -\mu\cA(h)\bar{u} +\cG(h)\bar{\pi}
& = & \cR(\bar{u},h) & \quad \mbox{ in }\Omega\backslash\Sigma,\\[0.25em]
(\cG(h)|\bar{u})
& = & 0 & \quad \mbox{ in }\Omega\backslash\Sigma,\\[0.25em]
{-[\![\mu([\cG(h)\bar{u}]+[\cG(h)\bar{u}]^{\sf{T}})-\bar{\pi}]\!]}\nu_\Gamma(h)
& = & \sigma H_\Gamma(h) \nu_\Gamma(h) & \quad \mbox{ on } \Sigma,\\[0.25em]
{[\![\bar{u}]\!]}
& = & 0 & \quad \mbox{ on } \Sigma,\\[0.25em]
\bar{u}
& = & 0 & \quad \mbox{ on }\partial\Omega,\\[0.25em]
\partial_t h - (\bar{u}|\nu_\Sigma)
& = & -(\bar{u}|\alpha(h)) & \quad \mbox{ on }\Sigma, \\[0.25em]
\bar{u}(0) = \bar{u}_0,\ \mbox{in}\ \Omega\backslash\Sigma,\ h(0) &=& h_0  &\quad \mbox{on}\ \Sigma.
\end{array}
\end{equation}
Here $\cA(h)$, $\cG(h)$ and $H_\Gamma(h)$ denote the transformed Laplacian, gradient and curvature, respectively. More precisely, we have
$$\Theta_h^\prime = I + \theta_h^\prime, \quad \quad \Theta_h^{\prime-1} = I - {[I + \theta_h^\prime]}^{-1}\theta_h^\prime$$
and
\begin{align*}
[\nabla\pi]\circ\Theta_h
&= \cG(h)\bar{\pi}\\
&=  [{(\Theta_h^{-1})}^{\prime{\sf T}}\circ\Theta_h]\nabla\bar{\pi}
 =  \nabla\bar{\pi} - \theta_h^{\prime\sf{T}}{[I + \theta_h^\prime]}^{-\sf{T}}\nabla\bar{\pi}\\
&=: (I - M_1(h))\nabla\bar{\pi}\\
[\mbox{div}\,u]\circ\Theta_h
&= (\cG(h)|\bar{u})\\
&=  ([{(\Theta_h^{-1})}^{\prime{\sf T}}\circ\Theta_h]\nabla|\bar{u})
 =  (\nabla|\bar{u}) - (\theta_h^{\prime\sf{T}}{[I + \theta_h^\prime]}^{-\sf{T}}\nabla|\bar{u})\\
&=  ((I - M_1(h))\nabla|\bar{u})
\end{align*}
and
\begin{align*}
[\Delta u]\circ\Theta_h
&= \cA(h)\bar{u}\\
&= [{(\Theta_h^{-1})}^{\prime}{(\Theta_h^{-1})}^{\prime\sf{T}}\circ\Theta_h]:\nabla^2\bar{u}
+ ([\Delta\Theta_h^{-1}]\circ\Theta_h|\nabla)\bar{u}\\
&= \Delta \bar{u} - M_4(h):\nabla^2\bar{u} - M_2(h)\nabla\bar{u}
\end{align*}
with
\begin{align*}
-M_2(h)\nabla\bar{u}
&:=([\Delta\Theta_h^{-1}]\circ\Theta_h|\nabla)\bar{u}\\
M_4(h):\nabla^2\bar{u}
&:= [2\textrm{sym}(\theta_h^{\prime\sf{T}}{[I + \theta_h^\prime]}^{-\sf{T}})
    - {[I + \theta_h^\prime]}^{-1}\theta_h^\prime \theta_h^{\prime\sf{T}}{[I + \theta_h^\prime]}^{-\sf{T}}]:\nabla^2\bar{u}.
\end{align*}
Note that
\begin{align*}
[\partial_t u]\circ\Theta_h
&=  \partial_t\bar{u} - \bar{u}^\prime[(\partial_t\Theta_h^{-1})\circ\Theta_h]
 =  \partial_t\bar{u} - \bar{u}^\prime\Theta_h^{\prime-1}\partial_t\Theta_h\\
&=  \partial_t\bar{u} - \bar{u}^\prime{[I + \theta_h^\prime]}^{-1}\theta_h^\prime\partial_t\theta_h
 =: \partial_t\bar{u} - M_3(h)\nabla\bar{u},
\end{align*}
hence
$$R(\bar{u},h)=-\rho(\bar{u}\cdot\cG(h)\bar{u})+M_3(h)\nabla\bar{u}.$$
With the curvature tensor $L_\Sigma$ and the surface gradient
 $\nabla_\Sigma$ we have
\begin{eqnarray*}
\nu_\Gamma(h)= \beta(h)(\nu_\Sigma-\alpha(h)),&& \alpha(h)= M_0(h)\nabla_\Sigma h,\\
M_0(h)=(I-hL_\Sigma)^{-1},&& \beta(h) = (1+|\alpha(h)|^2)^{-1/2},
\end{eqnarray*}
and
$$V=(\partial_t\Theta|\nu_\Gamma) = \partial_t h (\nu_\Gamma|\nu_\Sigma)=
\beta(h)\partial_t h.$$
Employing this notation, we have
\begin{align*}
\theta_h^\prime(t,x)
&= \nu_\Sigma(\Pi(x))\otimes M_0(d(x))\nabla_\Sigma h(t,\Pi(x))\\
&-h(t,\Pi(x))L_\Sigma(\Pi(x))M_0(d(x))\cP_\Sigma
\quad \mbox{ for }\; |d(x)|<a/3,\\
\theta_h^\prime(t,x)&=0\quad \mbox{ for }\; |d(x)|>2a/3,
\end{align*}
and
\begin{align*}
\theta_h^\prime(t,x)&= \frac{1}{a}\chi'(d(x)/a)h(t,\Pi(x))\nu_\Sigma(\Pi(x))\otimes\nu_\Sigma(\Pi(x))\\
& + \chi(d(x)/a)\nu_\Sigma(\Pi(x))\otimes M_0(d(x))\nabla_\Sigma h(t,\Pi(x))\\
&-\chi(d(x)/a)h(t,\Pi(x))L_\Sigma(\Pi(x))M_0(d(x))\cP_\Sigma\\
&\quad \mbox{ for }\; a/3<|d(x)|<2a/3,
\end{align*}
where $\cP_{\Sigma}= I -\nu_{\Sigma}\otimes\nu_{\Sigma}$ denotes the projection
onto the tangent space of $\Sigma$. Thus, $[I + \theta_h^\prime]$ is boundedly invertible, if $|h|_\infty$ and $|\nabla_\Sigma h|_\infty$ are sufficiently small.
The curvature $H_\Gamma(h)$ becomes
$$ H_\Gamma(h) = \beta(h)\{ {\rm tr} [M_0(h)(L_\Sigma+\nabla_\Sigma \alpha(h))]
-\beta^2(h)(M_0(h)\alpha(h)|[\nabla_\Sigma\alpha(h)]\alpha(h))\},$$
a differential expression involving second order derivatives of $h$ only linearly.
Its linearization is given by
$$H^\prime_\Gamma(0)= {\rm tr}\, L_\Sigma^2 +\Delta_\Sigma.$$
Here $\Delta_\Sigma$ denotes the Laplace-Beltrami operator on $\Sigma$.

It is convenient to decompose the stress boundary condition into tangential and normal parts.
Multiplying the stress interface condition with $\nu_\Sigma/\beta$ we obtain
$$[\![\bar{\pi}]\!]- \sigma H_\Gamma(h)= ([\![\mu([\cG(h)\bar{u]}+[\cG(h)\bar{u}]^{\sf T})]\!](\nu_\Sigma-M_0(h)\nabla_\Sigma h|\nu_\Sigma),$$
for the normal part of the stress boundary condition, and
\begin{align*}&-\cP_\Sigma[\![\mu([\cG(h)\bar{u}]+[\cG(h)\bar{u}]^{\sf T})]\!](\nu_\Sigma-M_0(h)\nabla_\Sigma h)\\
&\qquad= \left([\![\mu([\cG(h)\bar{u}]+[\cG(h)\bar{u}]^{\sf T})]\!](\nu_\Sigma-M_0(h)\nabla_\Sigma h)|\nu_\Sigma\right)M_0(h)\nabla_\Sigma h,\end{align*}
for the tangential part. Note that the latter neither contains the pressure jump nor the curvature!

We rewrite this problem in quasilinear form, dropping the bars and collecting its principal linear part
on the left hand side.
\begin{eqnarray}
\label{tfbns2}
&&\rho\partial_tu -\mu\Delta u+\nabla \pi= F(h,u)\nabla u + M_4(h):\nabla^2 u + M_1(h)\nabla\pi \quad
\mbox{ in }\Omega\backslash\Sigma,\nonumber\\
&&\mbox{div}\,u= M_1(h):\nabla u\quad
\mbox{ in }\Omega\backslash\Sigma,\nonumber\\
&&\cP_\Sigma[\![-\mu(\nabla u+\nabla u^\textsf{T})]\!]\nu_\Sigma=G_\tau(h)\nabla u,\\
&&-([\![-\mu(\nabla u+\nabla u^\textsf{T})]\!]\nu_\Sigma|\nu_\Sigma) +[\![\pi]\!]-\sigma H_\Gamma^\prime(0) h
=G_\nu(h)\nabla u + G_\gamma(h)
\quad \mbox{ on } \Sigma,\nonumber\\
&&[\![u]\!]=0\quad \mbox {on } \Sigma,\nonumber\\
&&u=0\quad \mbox{ on } \partial\Omega,\nonumber\\
&&\partial_th-(u|\nu_\Sigma)=(M_0(h)\nabla_\Sigma h| u)\quad \mbox{ on } \Sigma,\nonumber\\
&&u(0)=u_0 \; \mbox{ in }\Omega\backslash\Sigma,\quad h(0)  =  h_0  \; \mbox{ on }\Sigma.\nonumber
\end{eqnarray}
The right-hand sides in this problem are either lower order terms or are
of the same order appearing on the left, but carrying a factor $h$ or $\nabla_\Sigma h$,
which are small by construction. In fact, since  $\Gamma_0$ is approximated
by $\Sigma$ in the second order bundle we have smallness of $h_0$, $\nabla_\Sigma h_0$,
and even of $\nabla_\Sigma^2 h_0$, uniformly on $\Sigma$. All terms on the right-hand side
are at least quadratic. More precisely, besides the $M_j(h)$ which have been introduced before,
the nonlinearities have the following form:
\begin{align*}
F(h,u)\nabla u =
& -(u|\nabla u) + [M_1(h) + M_2(h) + M_3(h)]\nabla u,\\
G_\nu(h)\nabla u =
& - ([\![\mu([\nabla u] +[\nabla u]^{\sf T})]\!]M_0(h)\nabla_\Sigma h|\nu_\Sigma)\\
& - ([\![\mu([M_1(h)\nabla u]+[M_1(h) \nabla u]^{\sf T})]\!](\nu_\Sigma-M_0(h)\nabla_\Sigma h)|\nu_\Sigma),\\
G_\tau(h)\nabla u =
& ([\![\mu([(I-M_1(h))\nabla u]+[(I-M_1(h))\nabla u]^{\sf T})]\!](\nu_\Sigma - M_0(h)\nabla_\Sigma h)|\nu_\Sigma)\cdot\\
&\cdot M_0(h)\nabla_\Sigma h\\
& - \cP_\Sigma[\![\mu([(I-M_1(h))\nabla u]+[(I-M_1(h))\nabla u]^{\sf T})]\!]M_0(h)\nabla_\Sigma h\\
& - \cP_\Sigma[\![\mu([M_1(h)\nabla u]+[M_1(h)\nabla u]^{\sf T})]\!]\nu_\Sigma,\\
G_\gamma(h) =
& \ \sigma(H_\Gamma(h)-H_\Gamma^\prime(0)h).
\end{align*}
The idea of our approach can be described as follows.
We consider the transformed problem \eqref{tfbns2}.
Based on maximal $L_p$-regularity of the linear problem given by the left
hand side of \eqref{tfbns2}, we employ the contraction mapping principle to obtain local
well-posedness of the nonlinear problem.
The solutions of the transformed  problem will belong to the following class:
$$u\in H^1_p(J;L_p(\Omega)^n)\cap L_p(J;H^2_p(\Omega\backslash\Sigma)^n),\quad
\pi \in L_p(J; \dot{H}^1_p(\Omega\backslash\Sigma)),$$
$$[\![\pi]\!]\in W^{1/2-1/2p}_p(J;L_p(\Sigma))\cap L_p(J;W^{1-1/p}_p(\Sigma)),$$
$$h\in W^{2-1/2p}_p(J;L_p(\Sigma))\cap H^1_p(J;W^{2-1/p}_p(\Sigma))\cap
L_p(J;W^{3-1/p}_p(\Sigma)).$$
This program will be carried out in the next sections.


\section{The Linearized Problem}\label{sect-lp}
We consider now the inhomogeneous linear problem
\begin{equation}\label{linFB}
\begin{array}{rcll}
\rho\partial_tu -\mu\Delta u+\nabla \pi & = & \rho f & \; \mbox{ in }\Omega\backslash\Sigma,\\[0.25em]
{\rm div}\ u & = & f_d & \; \mbox{ in }\Omega\backslash\Sigma,\\[0.25em]
[\![-\mu(\nabla u+[\nabla u]^{\sf T})+I\pi]\!]\nu_\Sigma -\sigma(\Delta_\Sigma h) \nu_\Sigma & = & g & \; \mbox{ on } \Sigma,\\[0.25em]
[\![u]\!] & = & u_\Sigma & \; \mbox{ on } \Sigma,\\[0.25em]
u & = & u_b & \; \mbox{ on } \partial\Omega,\\[0.25em]
\partial_th-(u|\nu_\Sigma) +(b|\nabla_\Sigma h) & = & g_h & \; \mbox{ on } \Sigma,\\[0.25em]
u(0)=u_0 \; \mbox{ in }\Omega\backslash\Sigma,\quad h(0) & = & h_0 & \; \mbox{ on }\Sigma
\end{array}
\end{equation}
on a finite time-interval $J=[0,a]$.
We choose the same regularity classes for $u$ and $\pi$
as before, i.e.
$$u\in Z_u:= H^1_p(J;L_p(\Omega)^n)\cap L_p(J;H^2_p(\Omega\backslash\Sigma)^n),$$
and
$$\pi\in Z_\pi:= L_p(J;\dot{H}^1_p(\Omega\backslash\Sigma)).$$
Then
$$u_\Sigma\in W^{1-1/2p}_p(J;L_p(\Sigma)^n)\cap L_p(J;W^{2-1/p}_p(\Sigma)^n),$$
and
$$u_b\in W^{1-1/2p}_p(J;L_p(\partial\Omega)^n)\cap L_p(J;W^{2-1/p}_p(\partial\Omega)^n).$$
Therefore the equation for the height function
$h$ lives in the trace space for the components of $u$, i.e.
$$ g_h\in Y^0_u:=W^{1-1/2p}_p(J;L_p(\Sigma))\cap L_p(J;W^{2-1/p}_p(\Sigma)),$$
hence the natural space for  $h$ is given by
$$ h\in Z_h:=W^{2-1/2p}_p(J;L_p(\Sigma))\cap H^1_p(J;W^{2-1/p}_p(\Sigma))
\cap L_p(J;W^{3-1/p}_p(\Sigma)).$$
Here the last space comes from the curvature term in the stress boundary condition,
which induces an additional order in spatial regularity.
Assuming that $g$ belongs to the trace space of $\nabla u$, i.e.
$$g\in Y_u^1:= W^{1/2-1/2p}_p(J;L_p(\Sigma)^n)\cap L_p(J;W^{1-1/p}_p(\Sigma)^n),$$
we have the additional regularity $[\![\pi]\!]\in Y^1_u$ for the pressure jump across
the interface $\Sigma$. The function $b\in Y_u^0$
is given; we will choose $b$ appropriately in Section \ref{sect-lwp}.

There is another hidden regularity which comes from the divergence equation. To identify it, let $\phi\in\dot{H}^1_{p^\prime}(\Omega)$.
An integration by parts yields
\begin{align*}
(u|\nabla\phi)_\Omega&=-({\rm div}\, u|\phi)_\Omega+(u\cdot\nu_{\partial\Omega}|\phi)_{\partial\Omega}-([\![u\cdot\nu_\Sigma]\!]|\phi)_\Sigma \\
&=-(f_d|\phi)_\Omega +(u_b\cdot\nu_{\partial\Omega}|\phi)_{\partial\Omega}-(u_\Sigma\cdot\nu_\Sigma|\phi)_\Sigma .
\end{align*}
Set $ \widehat{H}^{-1}_p(\Omega)=(\dot{H}^1_{p^\prime}(\Omega))^*$ and define the functional $(f_d,u_b\cdot\nu_{\partial\Omega},u_\Sigma\cdot\nu_\Sigma)\in\widehat{H}^{-1}_p(\Omega)$ by means of
$$\langle(f_d,u_b\cdot\nu_{\partial\Omega},u_\Sigma\cdot\nu_\Sigma)|\phi\rangle :=-(f_d|\phi)_\Omega +(u_b\cdot\nu_{\partial\Omega}|\phi)_{\partial\Omega}-(u_\Sigma\cdot\nu_\Sigma|\phi)_\Sigma.$$
Then we have
$$\langle(f_d,u_b\cdot\nu_{\partial\Omega},u_\Sigma\cdot\nu_\Sigma)|\phi\rangle=(u|\nabla\phi)_\Omega.$$
Since $u\in H^1_p(J;L_p(\Omega)^n)$ this implies $(f_d,u_b\cdot\nu_{\partial\Omega},u_\Sigma\cdot\nu_\Sigma)\in H^1_p(J;\widehat{H}^{-1}_p(\Omega))$.
Observe that this condition contains the compatibility condition
$$\int_\Omega f_d \,dx = \int_{\partial\Omega}u_b\cdot\nu_{\partial\Omega}\,d{\partial\Omega}-\int_\Sigma u_\Sigma\cdot\nu_\Sigma \,d\Sigma,$$
which appears choosing $\phi\equiv 1$.

In the particular case $f_d=0$ we have $(f_d,u_b\cdot\nu_{\partial\Omega},u_\Sigma\cdot\nu_\Sigma)\in H^1_p(J;\widehat{H}^{-1}_p(\Omega))$
if and only if $u_b\cdot\nu_{\partial\Omega}\in H^1_p(J;\dot{W}^{-1/p}_p(\partial\Omega))$ and $u_\Sigma\cdot\nu_\Sigma \in H^1_p(J;\dot{W}^{-1/p}_p(\Sigma))$.

The main theorem of this section states that problem (\ref{linFB})
admits maximal regularity,
in particular, it defines an isomorphism between
the solution space and the space of data.

\begin{theorem}\label{linearproblem}
Let $p>n+2$, $\Omega\subset\R^n$ a bounded domain with $\partial\Omega\in C^3$, $\Sigma\subset\Omega$ a closed hypersurface of class $C^3$ and $\rho_j$, $\mu_j$, $\sigma$ be positive constants, $j=1,2$; set $J=[0,a]$, and suppose
$$b\in W^{1-1/2p}_p(J;L_p(\Sigma))^n\cap L_p(J;W^{2-1/p}_p(\Sigma))^n.$$
Then the two-phase Stokes problem (\ref{linFB}) admits a unique solution $(u,\pi,h)$
with regularity
$$u\in H^1(J;L_p(\Omega)^n)\cap L_p(J;H^2_p(\Omega\backslash\Sigma)^n),
\quad \pi\in L_p(J;\dot{H}^1_p(\Omega\backslash\Sigma)),$$
$$[\![\pi]\!]\in W^{1/2-1/2p}_p(J;L_p(\Sigma))\cap L_p(J;W^{1-1/2p}_p(\Sigma)),$$
$$ h\in W^{2-1/2p}_p(J;L_p(\Sigma))\cap H^1_p(J;W^{2-1/p}_p(\Sigma))
\cap L_p(J;W^{3-1/p}_p(\Sigma)),$$
if and only if the data $(u_0, h_0, u_b, u_\Sigma, f, f_d, g, g_h)$ satisfy the
following regularity and compatibility conditions:\\
(a) $f\in L_p(J\times\Omega)^n$, $u_0\in W^{2-2/p}_p(\Omega\backslash\Sigma)^n$,
and ${u_0}_{|_{\partial\Omega}}={u_b}_{|_{t=0}}$;\\
(b) $f_d\in  L_p(J; H^1_p(\Omega\backslash\Sigma))$, and ${\rm div}\, u_0 = {f_d}_{|_{t=0}}$;\\
(c) $u_b \in W^{1-1/2p}_p(J;L_p(\partial\Omega)^n)\cap L_p(J;W^{2-1/p}_p(\partial\Omega)^n)$,\\
(d) $u_\Sigma\in W^{1-1/2p}_p(J;L_p(\Sigma)^n)\cap L_p(J;W^{2-1/p}_p(\Sigma)^n)$;\\
(e) $(f_d,u_b\cdot\nu_{\partial\Omega},u_\Sigma\cdot\nu_\Sigma)\in H^1_p(J;\widehat{H}^{-1}_p(\Omega))$;\\
(f) $ g\in W^{1/2-1/2p}_p(J;L_p(\Sigma))^n\cap L_p(J;W^{1-1/p}_p(\Sigma))^n$;\\
(g) $[\![u_0]\!]={u_\Sigma}_{|_{t=0}}$, and
$\cP_\Sigma [\![\mu(\nabla u_0 +[\nabla u_0]^{\sf T})]\!] =\cP_\Sigma g_{|_{t=0}}$;\\
(h) $h_0\in W^{3-2/p}_p(\Sigma)$, and
$g_h\in W^{1-1/2p}_p(J;L_p(\Sigma))\cap L_p(J;W^{2-1/2p}_p(\Sigma))$.

The solution map $(u_0,h_0,u_b,u_\Sigma,f,f_d,g,g_h,b)\mapsto (u,\pi,[\![\pi]\!],h)$ is continuous
between the corresponding spaces.
\end{theorem}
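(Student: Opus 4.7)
The plan is to reduce the system, by extending and subtracting auxiliary data, to one with trivial initial and boundary traces, and then to solve the reduced problem via localization to the flat two-phase model problem from \cite{PrSi09a}, combined with interior and one-phase Stokes theory. The necessity of conditions (a)--(h) follows from standard trace and anisotropic embedding theorems applied to a solution in the asserted class; in particular, (e) is precisely the hidden regularity derived just before the theorem, and (g) encodes the compatibilities obtained by taking traces at $t=0$ of the divergence equation, the velocity jump condition, and the tangential part of the stress balance. The substance of the proof is sufficiency.

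I would first remove the initial data. Solve a two-phase heat equation for an auxiliary velocity $u_*\in Z_u$ with initial value $u_0$ and matching Dirichlet and jump traces; solve a linear fourth-order surface parabolic equation for an auxiliary height $h_*\in Z_h$ with initial value $h_0$; then replace $(u,h)$ by $(u-u_*,h-h_*)$ and update the data consistently. Next, reduce the boundary and interface data: construct an extension of $u_b$ and $u_\Sigma$ into $\Omega\backslash\Sigma$ with $Z_u$-regularity and prescribed divergence, which is possible precisely because of (e). Finally, eliminate $f_d$ by solving a weak two-phase divergence problem in $\dot H^1_p$ with the correct time regularity. After these reductions $u_0, h_0, f_d, u_b, u_\Sigma$ all vanish and only $f$, $g$, $g_h$, together with the transport coefficient $b$, drive the system.

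The reduced problem is solved by localization. Cover $\bar\Omega$ by a finite atlas of three kinds of charts: balls compactly contained in some $\Omega_i$, balls crossing $\Sigma$ which flatten it to a hyperplane, and balls near $\partial\Omega$ flattening the outer boundary. Using a subordinate partition of unity $\{\varphi_k\}$ one studies the triples $(\varphi_k u, \varphi_k \pi, \varphi_k h)$. After freezing the leading coefficients at a base point of each chart, the resulting model is either a full-space or interior Stokes problem, a one-phase Stokes problem in a half-space with Dirichlet condition, or the flat two-phase Stokes problem with surface tension solved in \cite{PrSi09a}. Each admits maximal $L_p$-regularity in the classes stated here. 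The commutators produced by localization, the frozen-coefficient error, and the lower-order transport term $(b|\nabla_\Sigma h)$ in the kinematic equation either carry a small factor (from small chart diameter or short time) or drop one derivative on the correct scale, so they can be absorbed on a short interval $[0,\delta]$ by a Neumann series; iteration then yields existence on all of $J=[0,a]$, and continuous dependence follows from the open mapping theorem applied to the isomorphism between solution space and data space.

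The main obstacle is the structural coupling through the surface tension term $\sigma\Delta_\Sigma h$ in the normal stress balance, which is what forces the unusual time regularity $W^{2-1/2p}_p(J;L_p(\Sigma))$ of $h$, the additional jump regularity $[\![\pi]\!]\in W^{1/2-1/2p}_p(J;L_p(\Sigma))\cap L_p(J;W^{1-1/2p}_p(\Sigma))$, and the trace compatibility (g). This coupling is invisible in single-phase or zero-surface-tension theory and is exactly what is resolved by the half-space model problem of \cite{PrSi09a}; once that result is in hand, the reductions and localization above are conceptually routine, though technically demanding in the bookkeeping of compatibility conditions and in tracking the hidden $\widehat{H}^{-1}_p$ regularity through each reduction.
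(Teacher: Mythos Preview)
Your outline is broadly correct and close in spirit to the paper's proof, but there is one genuine gap and one substantive methodological difference worth flagging.

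\textbf{The gap: pressure commutators.} When you localize with a cutoff $\varphi_k$, the momentum equation picks up the commutator term $(\nabla\varphi_k)\pi$. You assert that all commutators ``either carry a small factor or drop one derivative on the correct scale,'' but this term does neither: it is zeroth order in $\pi$, and since a priori $\pi$ lies only in $L_p(J;\dot H^1_p(\Omega\setminus\Sigma))$, there is no short-time smallness available. The paper addresses this by first proving (Corollary~3.2) that once the reductions have produced $f_d=0$, $u_b\cdot\nu_{\partial\Omega}=0$, $u_\Sigma\cdot\nu_\Sigma=0$, the pressure gains extra time regularity $\pi\in {_0H^\alpha_p}(J;L_p(\Omega))$ for every $\alpha<\tfrac12-\tfrac1{2p}$. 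This is what supplies the factor $a^\gamma$ needed to absorb the pressure commutator. Without this observation your Neumann-series absorption step does not close. You should also note that to exploit this one must first decompose $F_j(u,\pi)$ via a Helmholtz projection and solve an auxiliary transmission problem for a potential $\phi_j$; the paper carries this out explicitly.

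\textbf{The methodological difference: surjectivity.} You propose to obtain existence directly by a Neumann-series gluing of local solutions. The paper takes a different route: the localization is used only to prove an \emph{a priori} estimate $|z|_{\mathbb E}\le C|H|_{\mathbb F}$, which shows that the solution operator $L$ is injective with closed range, i.e.\ semi-Fredholm. Surjectivity is then obtained by a continuation argument: one introduces a homotopy parameter $\tau\in[0,1]$, replacing the kinematic equation by
\[
\partial_t h + \tau(-\Delta_\Sigma)^{1/2}h - (1-\tau)\{(u|\nu_\Sigma)-(b|\nabla_\Sigma h)\}=g_h,
\]
and simultaneously deforms $\rho_j,\mu_j$ to $1$. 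The a priori bounds are uniform along this path, and at the endpoint $\tau=1$, $\rho_1=\rho_2=\mu_1=\mu_2=1$ the height equation decouples, while the remaining system is a one-phase Stokes problem on $\Omega$ whose solvability is classical. The continuation method for semi-Fredholm operators then transports surjectivity back to the original problem. This avoids having to reconcile local solutions (in particular local pressures, which are only determined up to constants) across overlapping charts, which your direct approach would need to address.

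Finally, a minor point: the paper inserts an intermediate ``bent interface'' step (graph over a hyperplane with small gradient) between the flat model and the general geometry; your charts ``flattening $\Sigma$ to a hyperplane'' are essentially this, but you should be aware that the perturbation estimates there already require some care with the curvature commutator $[\Delta_\Sigma,\varphi_k]h$ and the Banach-algebra structure of the trace spaces.
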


The proof will be carried out in the following subsections.

\noindent
In general the pressure $\pi$ has no more regularity as stated in Theorem \ref{linearproblem}. However, there are situations where $\pi$ enjoys extra time-regularity, as stated in the following

\begin{corollary}\label{timeregpressure}
Assume in addition to the hypotheses of Theorem \ref{linearproblem} that
$$u_0=h_0=f_d=0,\quad {\rm div}\, f=0 \mbox{ in } \Omega\backslash\Sigma,$$
$$u_b\cdot\nu_{\partial\Omega}=0 \; \mbox{ on  } \;\partial\Omega,\quad  u_\Sigma\cdot\nu_\Sigma=0\;\mbox{ on  } \;\Sigma,$$
and
$$[\![(f|\nu_\Sigma)]\!]=0 \mbox{ on } \Sigma, \quad  (f|\nu_{\partial\Omega})=0 \mbox{ on } \partial\Omega.$$
Then $ \pi\in {_0H^\alpha_p}(J;L_p(\Omega))$, for each $\alpha\in(0,1/2-1/2p)$.
\end{corollary}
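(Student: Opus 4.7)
The plan is to recognize that, under the extra hypotheses, $\pi(\cdot,t)$ solves pointwise in time a weak two-phase Laplace transmission problem in $\Omega\setminus\Sigma$ whose data inherit their time regularity from velocity traces, from the curvature term, and from $g$. The extra time regularity of $\pi$ will then follow from the elliptic solution map together with the already-known time regularity of the data.

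First, I would derive the bulk equation. Applying the divergence to the momentum equation $\rho\partial_t u-\mu\Delta u+\nabla\pi=\rho f$ in each phase, and using $\operatorname{div} u=f_d=0$ together with $\operatorname{div} f=0$, one obtains $\Delta\pi=0$ in $\Omega\setminus\Sigma$. The normal part of the stress boundary condition on $\Sigma$ yields
$$[\![\pi]\!]=\sigma\Delta_\Sigma h+\mu[\![((\nabla u+\nabla u^{\sf T})\nu_\Sigma|\nu_\Sigma)]\!]-(g|\nu_\Sigma).$$
The weak formulation of the pressure problem is produced by testing the momentum equation with $\nabla\phi$, $\phi\in\dot H^1_{p'}(\Omega)$, and splitting the integral as $(\cdot,\cdot)_{\Omega_1}+(\cdot,\cdot)_{\Omega_2}$. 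Since $\partial_t u$, $f$, and $\Delta u$ are each divergence-free inside each phase, integration by parts reduces the volume integrals to surface integrals on $\partial\Omega\cup\Sigma$. The hypotheses $(u_b|\nu_{\partial\Omega})=0$, $(u_\Sigma|\nu_\Sigma)=0$, $[\![(f|\nu_\Sigma)]\!]=0$, and $(f|\nu_{\partial\Omega})=0$ kill the ``coefficient-continuous'' contributions, leaving only jump terms proportional to $\rho_1-\rho_2$ and $\mu_1-\mu_2$. In this way $\pi(\cdot,t)$ is exhibited as the solution of a weak two-phase Laplace transmission problem whose data are built linearly from traces of $u$, $\nabla u$, $f$, and $g$ on $\partial\Omega\cup\Sigma$, together with $\sigma\Delta_\Sigma h$.

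Second, I would invoke the $L_p$-theory for the scalar two-phase Laplace transmission problem with prescribed Neumann data on $\partial\Omega$ and prescribed jump and co-normal jump on $\Sigma$: the solution map $\{\text{data}\}\mapsto\pi\in L_p(\Omega)$ is bounded and time-independent, so
$$\|\pi\|_{X(J;L_p(\Omega))}\leq C\,\|\text{data}\|_{X(J;\cdot)}$$
for any reasonable function space $X$ in time. The data have time regularity $W^{1/2-1/2p}_p(J;\cdot)$: the traces $u|_\Sigma$ and $\nabla u|_\Sigma$ belong to this class by the trace theorem for the anisotropic space $H^1_p(J;L_p)\cap L_p(J;H^2_p)$; the datum $g$ is of class $W^{1/2-1/2p}_p(J;L_p(\Sigma))$ by hypothesis (f) of \thmref{linearproblem}; and $\sigma\Delta_\Sigma h$ is strictly better since $h\in Z_h$. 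Since $W^{1/2-1/2p}_p\hookrightarrow H^\alpha_p$ for $\alpha<1/2-1/2p$ (using $p\geq 2$), we obtain $\pi\in H^\alpha_p(J;L_p(\Omega))$, and the vanishing of all data at $t=0$ forces the zero initial trace, yielding $\pi\in {_0H^\alpha_p}(J;L_p(\Omega))$.

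The main obstacle is the second step: assembling the weak Laplace transmission problem so that the right-hand side is a bounded functional on $\dot H^1_{p'}(\Omega)$ built only from traces that are well defined in the anisotropic setting. The subtlety is that $\Delta u$ and $\partial_t u$ do not individually admit pointwise traces, yet when paired against $\nabla\phi$ their integrals can be reorganized into boundary functionals with the prescribed time regularity. Tracking the density and viscosity jumps carefully, and verifying that the resulting solution map really preserves $W^{1/2-1/2p}_p$ in time, is the main bookkeeping burden.
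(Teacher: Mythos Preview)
Your overall strategy is sound---recognize that $\pi$ is determined by an elliptic problem whose data inherit time regularity from $u$, $\nabla u$, $h$, and $g$---and this is essentially what the paper does. But there is a genuine gap in your execution of step one that the paper's argument circumvents by a duality trick.

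Your claim that testing $\rho\partial_t u-\mu\Delta u+\nabla\pi=\rho f$ against $\nabla\phi$ ``reduces the volume integrals to surface integrals'' with data of regularity $W^{1/2-1/2p}_p$ in time fails on two counts. First, the $(\rho\partial_t u|\nabla\phi)$ term: integrating by parts phase-wise and using the hypotheses, this becomes $-(\rho_2-\rho_1)\int_\Sigma\partial_t(u\cdot\nu_\Sigma)\,\phi\,d\Sigma$, and $\partial_t(u\cdot\nu_\Sigma)$ lies only in $W^{-1/2p}_p(J;L_p(\Sigma))$, not in $W^{1/2-1/2p}_p$. Second, for $(\mu\Delta u|\nabla\phi)$: using $\operatorname{div}\Delta u=0$ as you suggest leaves $\int(\mu\Delta u\cdot\nu)\phi$ on the boundary, but $\Delta u\cdot\nu$ has no trace in the $H^2_p$ class; integrating by parts only once leaves an irreducible volume term $-\int\mu\nabla u:\nabla^2\phi$. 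Your final paragraph acknowledges the trace issue and asserts that these integrals can be ``reorganized into boundary functionals with the prescribed time regularity''; neither term admits such a reorganization while keeping $\phi\in\dot H^1_{p'}(\Omega)$.

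The paper avoids both obstacles by a duality argument. For arbitrary $g\in L_{p'}(\Omega)$ it solves $\Delta\phi=\rho g$ with $[\![\phi]\!]=0$, $[\![\rho^{-1}\partial_\nu\phi]\!]=0$ on $\Sigma$ and $\partial_\nu\phi=0$ on $\partial\Omega$ (\thmref{thmtrans0}), so that $\phi\in H^2_{p'}(\Omega\setminus\Sigma)$. Writing $(\pi|g)_\Omega=(\pi/\rho|\Delta\phi)_\Omega$ and integrating by parts transfers derivatives onto $\phi$. The division by $\rho$ is decisive: it replaces $\nabla\pi$ by $\nabla\pi/\rho=f-\partial_t u+(\mu/\rho)\Delta u$, and since $\phi$ is continuous across $\Sigma$ the hypotheses give $(\partial_t u|\nabla\phi)_\Omega=(f|\nabla\phi)_\Omega=0$ directly, with no $(\rho_2-\rho_1)$ residue. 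For the Laplacian term one integrates by parts only once, retaining the volume term $\int_\Omega(\mu/\rho)\nabla u:\nabla^2\phi\,dx$; this is harmless because $\nabla u\in{_0H}^{1/2}_p(J;L_p)$ and $\nabla^2\phi\in L_{p'}$. The surviving surface terms involve only $[\![\pi]\!]$ and $\partial_{\nu_\Sigma}u$, both in ${_0W}^{1/2-1/2p}_p(J;L_p(\Sigma))$. Applying $\partial_t^\alpha$ and taking the supremum over $|g|_{L_{p'}}\le 1$ yields the claim.
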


\begin{proof}
Let $g\in L_{p^\prime}(\Omega)$
be given and solve the problem
\begin{align}\label{reducedf2}
\Delta\phi&=\rho g\quad \mbox{ in } \; \Omega\backslash\Sigma,\nonumber\\
[\![\phi]\!]&=0\quad \mbox{ on } \;\Sigma\nonumber,\\
[\![\rho^{-1}\partial_\nu\phi]\!]&=0\quad \mbox{ on } \;\Sigma,\\
\partial_\nu\phi&=0\quad \mbox{ on }\; \partial\Omega,\nonumber
\end{align}
by Theorem \ref{thmtrans0}. Since $(f|\nabla\phi)=(u|\nabla\phi)=0$ we obtain by integration by parts
\begin{align*}
(\pi|g)_\Omega &=\left(\frac{\pi}{\rho}|\Delta \phi\right)_\Omega = -\int_\Sigma[\![\frac{\pi}{\rho}\partial_{\nu_{\Sigma}} \phi]\!]d\Sigma - \left(\frac{\nabla\pi}{\rho}|\nabla \phi\right)_\Omega\\
&= -\int_\Sigma[\![\pi]\!]\frac{\partial_{\nu_{\Sigma}} \phi}{\rho}d\Sigma - \left(\frac{\mu}{\rho}\Delta u|\nabla \phi\right)_\Omega\\
&= \int_\Omega \frac{\mu}{\rho}\nabla u:\nabla^2 \phi dx +\int_\Sigma \{[\![\frac{\mu\partial_{\nu_{\Sigma}} u}{\rho}\nabla\phi]\!]-[\![\pi]\!]\frac{\partial_{\nu_{\Sigma}} \phi}{\rho}\}d\Sigma.
\end{align*}
Since $\nabla u\in {_0H}^{1/2}_p(J;L_p(\Omega)^{n\times n})$ and $[\![\pi]\!],\partial_ku_l\in {_0W_p}^{1/2-1/2p}(J;L_p(\Sigma))$, applying $\partial_t^\alpha$ to this identity, we obtain the estimate
$$ |\partial_t^\alpha \pi|_{L_p(J\times\Omega)}\leq C\{|\partial_t^\alpha \nabla u|_{L_p(J\times\Omega)}+
|\partial_t^\alpha [\![\pi]\!]|_{L_p(J\times\Sigma)}+|\partial_t^\alpha \partial_{\nu_{\Sigma}} u|_{L_p(J\times\Sigma)} \},$$
for each $\alpha\in(0,1/2-1/2p)$, hence $\pi\in {_0H^\alpha_p}(J;L_p(\Omega))$.
\end{proof}

\noindent
It is convenient to reduce the problem to the case $$u_0=h_0=f=f_d=u_\Sigma\cdot\nu_\Sigma=u_b\cdot\nu_{\partial\Omega}=0.$$ This
can be achieved as follows. Suppose $(u,\pi,h)$ is a solution of \eqref{linFB}.
We introduce a further dummy variable $q:=[\![\pi]\!]$; note that $q\in Z_q:=Y_u^1$.
We decompose $u=u_* + u_1$, $\pi=\pi_*+\pi_1$, $q=q_*+q_1$, $h=h_*+h_1$ where
    \begin{multline*}
    h_*(t) = [2e^{-(I-\Delta_\Sigma)^{1/2} t}-e^{-2(I-\Delta_\Sigma)^{1/2} t}]h_0 +\\
           [e^{-(I-\Delta_\Sigma) t}-e^{-2(I-\Delta_\Sigma) t}](I-\Delta_\Sigma)^{-1}\{(u_0|\nu_\Sigma)-(b|\nabla_\Sigma h)+g_h(0)\},
    \quad t\geq0.
    \end{multline*}
The function $h_*$ belongs to $Z_h$ and satisfies $h_*(0)=h_0$ and
$\partial_t h_*(0)=(u_0|\nu_\Sigma)-(b|\nabla_\Sigma h)+g_h(0)$. Then $h_1$ has initial value zero, and
also $\partial_th_1(0)=0$. We set $q_*(t)=e^{\Delta_\Sigma t}\q_0$ where
$$q_0:= ([\![\mu(\nabla{u_0}+[\nabla{u_0}]^{\sf T})]\!]\nu_\Sigma|\nu_\Sigma)
 + \sigma \Delta_\Sigma h_0+ (g(0)|\nu_\Sigma)$$
is determined by the data, and we define $\pi_*$ as the solution of
\begin{align*}
\Delta\pi_*&=0\quad \mbox{ in }\;\Omega\backslash\Sigma,\\
\partial_\nu \pi_*&=0 \quad \mbox{ on }\; \partial\Omega,\\
[\![\partial_{\nu_\Sigma}\pi_*]\!]&=0,\quad [\![\pi_*]\!]=q_*\quad \mbox{ on }\;\Sigma.
\end{align*}
Note that $q_*\in Y_u^1$ and $\pi_*\in Z_\pi$, by Theorem \ref{thmtrans}.
The function $u_*\in Z_u$ is defined as the solution of the parabolic problem
\begin{eqnarray}\label{linparab}
&&\rho\partial_tu -\mu\Delta u=-\nabla \pi_*+ \rho f\quad
\mbox{ in }\Omega\backslash\Sigma,\nonumber\\
&&u=u_b\quad \mbox{ on } \partial\Omega,\nonumber\\
&&[\![-\mu(\nabla u+[\nabla u]^{\sf T})]\!]\nu_\Sigma
 =g-q_*\nu_\Sigma+\sigma (\Delta_\Sigma h_*)\nu_\Sigma
\quad \mbox{ on } \Sigma,\\
&&[\![u]\!]=u_\Sigma\quad \mbox {on } \Sigma,\nonumber\\
&&u(0)=u_0,\quad\mbox{in}\ \Omega\backslash\Sigma,\nonumber
\end{eqnarray}
which is uniquely solvable since the appropriate Lopatinskii-Shapiro conditions are satisfied; see \cite{DHP07}.
Thus we may assume w.l.o.g. $u_0=h_0=f=0$ and that the time traces of $f_d$, $g$ and $g_h$ are zero at time zero.  Finally, to remove $f_d$, we solve the transmission problem
\begin{equation*}
\begin{aligned}
\Delta \psi &= \tilde{f}_d &\ \mbox{in}\quad &\Omega\backslash\Sigma,\\
\mbox{}[\![\rho\psi]\!] &= 0 &\ \mbox{on}\quad &\Sigma,\\
[\![\partial_{\nu_{\Sigma}}\psi]\!] &= 0 &\ \mbox{on}\quad &\Sigma,\\
\partial_{\nu_{\partial\Omega}}\psi &= 0 &\ \mbox{on}\quad &\partial\Omega,
\end{aligned}
\end{equation*}
according to Theorem \ref{thmtrans2}, where $\tilde{f}_d:=f_d-{\rm div}\ u_*$. Since $\partial\Omega\in C^3$, the solution satisfies $\nabla\psi\in Z_u$. Then setting $u_2=u_1-\nabla\psi$ and $\pi_2=\pi_1+\rho\partial_t\psi-\mu\Delta\psi$, $h_2=h_1$, we see that we may assume also $f_d=u_\Sigma\cdot\nu_\Sigma=u_b\cdot\nu_{\partial\Omega}=0$, the only non-vanishing data which remain are $g,g_h,u_\Sigma,u_b$; note that the time traces at $t=0$ of these functions are zero, and
$u_b\cdot\nu_{\partial\Omega}=0$ on $\partial\Omega$ and $u_\Sigma\cdot\nu_\Sigma=0$ on $\Sigma$.
\subsection{Flat Interface}
In this subsection we consider the linearized problem for a flat interface.
\begin{equation}
\label{linFBflat}
\begin{aligned}
\rho\partial_tu -\mu\Delta u+\nabla \pi&=\rho f
    &\ \hbox{in}\quad &\dot\R^{n},\\
{\rm div}\,u&= f_d&\ \hbox{in}\quad &\dot\R^{n},\\
-[\![\mu\partial_y v]\!] -[\![\mu\nabla_{x}w]\!]&=g_v
    &\ \hbox{on}\quad &\R^{n-1},\\
-2[\![\mu\partial_y w]\!] +[\![\pi]\!] -\sigma\Delta h &=g_w
    &\ \hbox{on}\quad &\R^{n-1},\\
[\![u]\!] &=u_\Sigma&\ \hbox{on}\quad &\R^{n-1},\\
\partial_th- w + (b|\nabla h)&=g_h &\ \hbox{on}\quad &\R^{n-1},\\
u(0)=u_0,\ h(0)&=h_0&\ \hbox{in}\quad&\dot{\R}^n,\ \hbox{on}\ \R^{n-1}.
 \end{aligned}
\end{equation}
Here we have identified $\R^{n-1}=\R^{n-1}\times\{0\}$ and
$\dot{\R}^n=\R^n\backslash\R^{n-1}$. It is convenient to split $u=(v,w)$, $f=(f_v,f_w)$,
$g=(g_v,g_w)$ into tangential and normal components.

The  following result, which is implied by \cite[Theorem 3.1]{PrSi09b}, states that problem
(\ref{linFBflat}) admits maximal regularity,
in particular defines an isomorphism between the solution space $Z:=Z_u\times Z_\pi\times Z_q\times Z_h$
and the product-space of data $(u_0,h_0,u_\Sigma,f,f_d,g,g_h,b)$ which we denote for short by $Y$.
\begin{proposition}
\label{proplinflat}
Let $p>n+2$ be fixed, and assume that $\rho_j$, $\mu_j$, $\sigma$
are positive constants for $j=1,2$, and let $J=[0,a]$. Suppose
$$b_0\in \R^{n-1},\quad b_1\in  W^{1-1/2p}_p(J;L_p(\R^{n-1}))^{n-1}\cap L_p(J;W^{2-1/p}_p(\R^{n-1}))^{n-1},$$
and set $b=b_0+b_1$.
Then the Stokes problem with flat boundary \eqref{linFBflat}
admits a unique solution \\ $(u,\pi,h)$ with regularity
$$u\in H^1_p(J;L_p(\R^{n})^n)\cap L_p(J;H^2_p(\dot{\R}^{n})^n),\quad
\pi\in L_p(J;\dot{H}^1_p(\dot{\R}^{n})),$$
$$[\![\pi]\!]\in W^{1/2-1/2p}_p(J;L_p(\R^{n-1}))\cap L_p(J;W^{1-1/p}_p(\R^{n-1})),$$
$$h\in W^{2-1/2p}_p(J;L_p(\R^{n-1}))\cap H^1_p(J;W^{2-1/p}_p(\R^{n-1}))
\cap L_p(J;W^{3-1/p}_p(\R^{n-1}))$$
if and only if the data
$(f,f_d,g,g_h,u_0,h_0,u_\Sigma)$
satisfy the following regularity and compatibility conditions:
\begin{itemize}
\item[(a)]
$f\in L_p(J\times\R^{n})^n,\;u_\Sigma\in  W^{1-1/2p}_p(J;L_p(\R^{n-1})^n)\cap L_p(J;W^{2-1/p}_p(\R^{n-1})^n)$,
\vspace{1mm}
\item[(b)]
$f_d\in  L_p(J; H^1_p(\dot{\R}^{n}))$, $(f_d,u_\Sigma\cdot\nu_\Sigma)\in H^1_p(J;\widehat{H}^{-1}_p(\R^n))$,
\vspace{1mm}
\item[(c)]
$g=(g_v,g_w)\in W^{1/2-1/2p}_p(J;L_p(\R^{n-1}))^n\cap L_p(J;W^{1-1/p}_p(\R^{n-1}))^n$,
\vspace{1mm}
\item[(d)]
$g_h\in W^{1-1/2p}_p(J;L_p(\R^{n-1}))\cap L_p(J;W^{2-1/p}_p(\R^{n-1}))$,
\vspace{1mm}
\item[(e)]
$u_0\in W^{2-2/p}_p(\dot{\R}^{n})^n$, $h_0\in W^{3-2/p}_p(\R^{n-1})$,
\vspace{1mm}
\item[(f)]
${\rm div}\, u_0={f_d}|_{t=0}$ in $\,\dot\R^{n}$ and $[\![u_0]\!]={u_\Sigma}|_{t=0}$
on $\,\R^{n-1}$,
\vspace{1mm}
\item[(g)]
$-[\![\mu\partial_y v_0]\!] -[\![\mu\nabla_{x}w_0]\!] ={g_v}|_{t=0}$ on
$\,\R^{n-1}$.
\end{itemize}
The solution map $[(u_0,h_0,u_\Sigma,f,f_d,g,g_h,b)\mapsto (u,\pi,[\![\pi]\!],h)]$ is continuous between the corresponding spaces.
\end{proposition}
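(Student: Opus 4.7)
The proof rests on \cite[Theorem 3.1]{PrSi09b}, which establishes precisely the maximal regularity statement of \propref{proplinflat} in the case $b\equiv 0$. Denote the corresponding solution operator by $\mathcal{L}_0$; it is an isomorphism from the space of data $(u_0,h_0,u_\Sigma,f,f_d,g,g_h)$ satisfying (a)--(g) onto the solution space $Z:=Z_u\times Z_\pi\times Z_q\times Z_h$. Moving the drift $(b|\nabla h)$ to the right-hand side of the kinematic condition, $(u,\pi,[\![\pi]\!],h)\in Z$ solves \eqref{linFBflat} if and only if
\[
(u,\pi,[\![\pi]\!],h)=\mathcal{L}_0\bigl(u_0,h_0,u_\Sigma,f,f_d,g,g_h-(b|\nabla h)\bigr).
\]
Projecting onto the $h$-slot yields a fixed-point equation $h=h_*+\mathcal{K}\bigl[(b|\nabla h)\bigr]$ in $Z_h$, where $\mathcal{K}\in\mathcal{B}(Y^0_u,Z_h)$ is the $h$-marginal of $\mathcal{L}_0$ restricted to the $g_h$-input.

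\textbf{Treatment of the constant and variable parts.} The constant piece $b_0\in\R^{n-1}$ does not destroy maximal regularity: inserting the scalar multiplier $i(b_0|\xi)$ into the $\partial_th$-entry of the Lopatinskii matrix is a lower-order modification of the symbol analyzed in \cite{PrSi09b}, which affects neither the $\mathcal{R}$-sectoriality of the principal part nor the Lopatinskii-Shapiro determinant, so that the same argument carries through with $b_0$ built in (alternatively, one absorbs $b_0$ into a parabolic shift after an exponential time weight $e^{-\omega t}$ with $\omega$ large). With $b_0$ handled we may concentrate on the $b_1\in Y^0_u$ part. The crucial product estimate is
\[
\|(b_1|\nabla h)\|_{Y^0_u}\le C(a_0)\,\|b_1\|_{Y^0_u}\,\|h\|_{Z_h},\qquad C(a_0)\to 0\ \text{as}\ a_0\to 0,
\]
valid for $h\in Z_h([0,a_0])$ with vanishing time-trace at $t=0$. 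The spatial component $L_p(J_0;W^{2-1/p}_p)$ of the target is controlled by the Banach-algebra property of $W^{2-1/p}_p(\R^{n-1})$, which holds since $p>n+2$; the Slobodeckii component $W^{1-1/2p}_p(J_0;L_p)$ is controlled by combining the embedding $Z_h\hookrightarrow C(J_0;W^{2-1/p}_p(\R^{n-1}))$ with the surplus of one order of time regularity of $h$ over that of the target, the small factor $C(a_0)$ arising from the zero initial trace via an elementary Hardy/interpolation bound. Consequently, on $[0,a_0]$ the map $h\mapsto\mathcal{K}[(b_1|\nabla h)]$ is a strict contraction on the subspace of functions with vanishing time-trace, solving the fixed-point problem and thereby the full system.

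\textbf{Iteration and main obstacle.} Once solvability has been established on a short interval $[0,a_0]$, the solution is extended to the full interval $J=[0,a]$ by finitely many successive steps, at each step solving on $[ka_0,(k+1)a_0]$ with initial data $(u(ka_0),h(ka_0))\in W^{2-2/p}_p(\dot\R^n)^n\times W^{3-2/p}_p(\R^{n-1})$ inherited from the previous step; the compatibility conditions in (f), (g) propagate via continuity of the time-trace maps. The continuity of the inverse map then follows from the quantitative version of the contraction argument and the continuity of $\mathcal{L}_0$. The principal technical difficulty is producing the contractive factor $C(a_0)\to 0$ in the Slobodeckii half of the product estimate above, since the norm of $b_1$ itself is not small on $J_0$; this is where the zero time-trace hypothesis and the one-order surplus of temporal regularity of $Z_h$ over $Y^0_u$ are used in an essential way.
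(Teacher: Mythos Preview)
The paper does not give an independent proof of this proposition; it simply records that the statement ``is implied by \cite[Theorem 3.1]{PrSi09b}'' and moves on. Your proposal therefore cannot be compared against a detailed argument in the paper, but it can be checked for internal correctness as a route from the $b\equiv0$ case to the general one.

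Your treatment of the constant part $b_0$ is fine: either absorbing $i(b_0|\xi)$ into the boundary symbol or using an exponential time weight works, and this is indeed the reason the flat-interface statement singles out a constant $b_0\in\R^{n-1}$ (a nonzero constant is not in $L_p(\R^{n-1})$, so it must be built into the principal part rather than perturbed away).

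For the variable part $b_1$ your scheme (Neumann series on a short interval, then iterate) is the right idea, but the source of the small factor is misidentified. You claim
\[
\|(b_1|\nabla h)\|_{Y^0_u([0,a_0])}\le C(a_0)\,\|b_1\|_{Y^0_u}\,\|h\|_{Z_h},\qquad C(a_0)\to 0,
\]
with $C(a_0)\to0$ coming from the zero time-trace of $h$ and ``one order surplus of temporal regularity''. This is not correct: the map $\nabla_x:{_0}Z_h\to{_0}Y^0_u$ has operator norm bounded \emph{uniformly} in $a_0$ but not tending to zero. Indeed, the $L_p(J;W^{2-1/p}_p)$-component of $\|\nabla h\|_{Y^0_u}$ is controlled only by $\|h\|_{L_p(J;W^{3-1/p}_p)}$, which is one of the defining pieces of $\|h\|_{Z_h}$ and carries no factor of $a_0$; there is no extra order to spend here (the ``surplus'' you invoke is present only in the time-regularity half of the norm). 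The Banach-algebra estimate $\|(b_1|\nabla h)\|_{Y^0_u}\le C\|b_1\|_{Y^0_u}\|\nabla h\|_{Y^0_u}$ therefore gives at best a constant times $\|b_1\|_{Y^0_u([0,a_0])}\|h\|_{Z_h}$.

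The fix is simple but different from what you wrote: the smallness must come from $b_1$, not from $h$. For a \emph{fixed} $b_1\in Y^0_u([0,a])$ one has $\|b_1\|_{Y^0_u([0,a_0])}\to0$ as $a_0\to0$, by absolute continuity of the $L_p$-integrals and of the Gagliardo double integral defining the $W^{1-1/2p}_p$-seminorm. This yields the required contraction on $[0,a_0]$ for $a_0=a_0(b_1)$ small; the same reasoning applies on each subsequent interval $[ka_0,(k+1)a_0]$ (after splitting off a reference solution carrying the new initial data so that the fixed point is again sought in the zero-trace space), and finitely many steps reach $a$. Continuity in $b$ then follows from the explicit bounds. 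With this correction your argument is complete.
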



\subsection{Bent Interfaces}
Next we consider the case of a bent interface. By this we mean a situation
where the interface $\Sigma$ is given as a graph of a function $\phi:\R^{n-1}\to\R$
of class $BC^3$; thus $\Sigma=\{ (x,\phi(x)):\, x\in \R^{n-1}\}$. The normal $\nu_\Sigma$
is then given by
$$\nu_\Sigma(x)= \beta(x)\left[\begin{array}{c} -\nabla_x\phi(x)\\1\end{array}\right],
\quad \beta(x) =1/\sqrt{1+|\nabla_x\phi(x)|^2},$$
and the Laplace-Beltrami operator for such a surface
with $$\bar{h}(t,x)= h(t,(x,\phi(x)))$$ reads as
$$\Delta_\Sigma h = \Delta \bar{h} - \beta^2(\nabla^2\bar{h}\nabla\phi|\nabla\phi)
 -\beta^2[\Delta \phi -\beta^2 (\nabla^2\phi\nabla\phi|\nabla\phi)]
(\nabla\phi|\nabla \bar{h}).$$
We may assume by the reduction explained above that $u_0=h_0=f=f_d=u_\Sigma\cdot\nu_\Sigma=0$.
Set
\begin{align*}
\bar{u}(t,x,y)&= u(t,x,y+\phi(x)), \quad \bar{\pi}(t,x,y)= \pi(t,x,y+\phi(x)),
\end{align*}
for $t\in J=[0,a]$, $x\in\R^{n-1}$, $y\neq0$, and observe
$$\nabla u= \nabla\bar{u} - \nabla\phi\otimes\partial_y \bar{ u}.$$
Then we obtain for the
new variables $(\bar{u},\bar{\pi},\bar{h})$ the following problem.
For convenience we drop the bars, and split $u=(v,w)$ and $g=(g_v,g_w)$ as before.
\begin{equation}
\label{linFBpert}
\begin{aligned}
\rho\partial_tu -\mu\Delta u+\nabla \pi&=\mu B_1(u,\pi)
    &\ \hbox{in}\quad &\dot\R^{n},\\
{\rm div}\,u&=   B_2u&\ \hbox{in}\quad &\dot\R^{n},\\
-[\![\mu\partial_y v]\!] -[\![\mu\nabla_{x}w]\!]&=g_v + B_3(u,[\![\pi]\!],h)
    &\ \hbox{on}\quad &\R^{n-1},\\
-2[\![\mu\partial_y w]\!] +[\![\pi]\!] -\sigma\Delta_x h &=[g_w/\beta] +B_4(u,h)
    &\ \hbox{on}\quad &\R^{n-1},\\
[\![u]\!] &=u_\Sigma &\ \hbox{on}\quad &\R^{n-1},\\
\partial_th- w+ (b|\nabla h) &=g_h +B_5u+B_6 h &\ \hbox{on}\quad &\R^{n-1},\\
u(0)=0,\ h(0)&=0&\ \hbox{in}\quad&\dot{\R}^n,\ \hbox{on}\ \R^{n-1}.
 \end{aligned}
\end{equation}
Here we have set
\begin{align*}
B_1(u,\pi)&= |\nabla\phi|^2\partial_y^2u-2(\nabla\phi|\nabla_x\partial_y u)
+(\nabla\phi)\partial_y\pi -(\Delta\phi)\partial_yu \\
B_2u&= (\nabla\phi|\partial_y u),\\
B_3(u,[\![\pi]\!],h)&=-[\![\mu(\nabla_x v+[\nabla_x v]^{\sf T})]\!]\nabla\phi
- [\![\mu\partial_y v]\!]|\nabla\phi|^2\\
&+\{-[\![\mu (\partial_y v|\nabla\phi)]\!]+ [\![\pi]\!] -[\![\mu\partial_y w]\!]
-\sigma \Delta_\Sigma h\}\nabla\phi\\
B_4(u,h)&= -([\![\mu(\partial_y v+\nabla_x w)]\!]|\nabla\phi)
-[\![\mu\partial_y w]\!]|\nabla\phi|^2 +\sigma(\Delta_\Sigma h-\Delta h)\\
B_5u&=(\beta-1)w-\beta(\nabla\phi|v)= -\frac{\beta^2|\nabla\phi|^2}{1+\beta}w -\beta(\nabla\phi|v)\\
B_6h&=\beta^2[(b|\nabla \phi)-(b|e_n)|\nabla\phi|^2](\nabla\phi|\nabla {h}).
\end{align*}
Now suppose $(u,\pi,h)$ belongs to the maximal regularity class. We estimate
the perturbations $B_j$ as follows:
\begin{align*}|B_1(u,\pi)|_{L_p}&\leq \|\nabla\phi\|_{L_\infty}
[(2+|\nabla\phi|_{L_\infty})|\nabla^2u|_{L_p}+|\nabla\pi|_{L_p}]\\
&\quad+|\Delta\phi|_{L_\infty}|\nabla u|_{L_p},\\
|B_2u|_{L_p(H^1_p)}&\leq |\nabla \phi|_{L_\infty} |\nabla^2u|_{L_p}+
(|\nabla^2\phi|_{L_\infty}+|\nabla\phi|_{L_\infty})|\nabla u|_{L_p},\\
|\partial_tB_2u|_{L_p(H^{-1}_p)}&\leq |\nabla\phi\partial_t u|_{L_p(L_p)}
\leq |\nabla\phi|_{L_\infty}|\partial_t u|_{L_p},\\
|B_3(u,[\![\pi]\!],h)|_{W^{s}_p(L_p)}&\leq C|\nabla\phi|_{L_\infty}
(1+|\nabla\phi|_{L_\infty})
[| \nabla u|_{W^{s}_p(L_p)}+
|\nabla^2h|_{W^{s}_p(L_p)}]\\
&\quad + C|\nabla\phi|_{L_\infty}[|[\![\pi]\!]|_{W^{s}_p(L_p)}+|\nabla^2\phi|_{L_\infty}
|\nabla h|_{W^s_p(L_p)}],\\
|B_4(u,h)|_{W^{s}_p(L_p)}&\leq C|\nabla\phi|_{L_\infty}
(1+\|\nabla\phi\|_{L_\infty})
[| \nabla u|_{W^{s}_p(L_p)}+ |\nabla^2h|_{W^{s}_p(L_p)}]\\
&\quad+ C|\nabla^2\phi|_{L_\infty}|\nabla\phi|_{L_\infty}|\nabla h|_{W^s_p(L_p)},\\
|B_5u|_{W^{1-1/2p}_p(L_p)}&\leq 2|\nabla\phi|_{L_\infty}|u|_{W^{1-1/2p}_p(L_p)}.\\
\end{align*}
Here $C$ denotes a constant only depending on the parameters $\mu$ and $\sigma$,
and we have set $s=1/2-1/2p$. For the estimations in $L_p(J;W^{1-1/p}_p(\R^{n-1}))$ we
observe that
$$|\psi|_{W^s_p}= |\psi|_{L_p} +[\psi]_{s,p},
\quad [\psi]_{s,p}^p=\int_{|h|\leq 1}\int_{\R^{n-1}}|\psi(x+h)-\psi(x)|^p
\frac{dx\, dh}{|h|^{n-1+sp}},$$
defines a norm on $W^s_p(\R^{n-1})$. This implies
$$|a\psi|_{W^s_p}\leq |a|_{L_\infty}|\psi|_{W^s_p}+
c_{s,p}|\nabla a|_{L_\infty}|\psi|_{L_p},$$
for $a\in W_\infty^1(\R^{n-1})$, with some constant $c_{s,p}$ which only depends on $s\in(0,1)$,
$p\in(1,\infty)$ and on $n$. With this observation we have
\begin{align*}
|B_3(u,[\![\pi]\!],h)|_{L_p(W^{2s}_p)}&\leq C(1+|\nabla\phi|_{L_\infty})
\{|\nabla\phi|_{L_\infty}\cdot\\
&\cdot[|\nabla u|_{L_p(W^{2s}_p)} +|[\![\pi]\!]|_{L_p(W^{2s}_p)}
+|\nabla^2h|_{L_p(W^{2s}_p)}]\\
&+ |\nabla^2\phi|_{L_\infty}[|\nabla u|_{L_p} +|[\![\pi]\!]|_{L_p}
+|\nabla h|_{L_p(H^1_p)}]\}\\
&+C(|\nabla^3\phi|_{L_\infty}+|\nabla^2\phi|^2_{L_\infty})|\nabla\phi|_{L_\infty}
|\nabla h|_{L_p},\\
|B_4(u,h)|_{L_p(W^{2s}_p)}&\leq C(1+|\nabla\phi|_{L_\infty})
\{|\nabla\phi|_{L_\infty}|\nabla u|_{L_p(W^{2s}_p)}\\
&+|\nabla\phi|_{L_\infty}|\nabla^2 h|_{L_p(W^{2s}_p)}
+|\nabla^2\phi|_{L_\infty}[|\nabla u|_{L_p}+|\nabla h|_{L_p(H^1_p)}]\\
&+C(|\nabla^3\phi|_{L_\infty}+|\nabla^2\phi|^2_{L_\infty})|\nabla\phi|_{L_\infty}
|\nabla h|_{L_p}\}.\\
|B_5u|_{L_p(W^{1+2s}_p)}&\leq C|\nabla\phi|_{L_\infty}\{|u|_{L_p(W^{1+2s}_p)}
+|\nabla^2\phi|_{L_\infty}|u|_{L_p(H^1_p)}\}\\
&+C(|\nabla^3\phi|_{L_\infty}+|\nabla^2\phi|^2_{L_\infty})|u|_{L_p}.
\end{align*}
Here $C$ denotes a constant only depending on  $\mu$, $\sigma$, $p$, and $2s=1-1/p$.
To estimate $B_6 h$ we note that $Y_u^0$ is a Banach algebra since $p>n+2$. This yields
$$|B_6 h|_{Y^0_u}\leq C |\nabla\phi|_{L_\infty}(|b_0|+|b_1|_{Y_u^0})|\nabla h|_{Y_u^0}\leq C |\nabla\phi|_{L_\infty}(|b_0|+|b_1|_{Y_u^0})| h|_{Z_h}.$$

To solve the problem \eqref{linFBpert}, let $z=(u,\pi,[\![\pi]\!],h)\in {_0Z}$, where ${_0Z}$ means the solution space with
zero time trace at $t=0$,
$F:=(0,0,g_v,g_w/\beta,u_\Sigma,g_h)\in {_0Y}$,  the space of data with zero time trace,
and let $B:{_0Z}\to {_0Y}$ defined by
$$Bz=(B_1(u,\pi),B_2u,B_3(u,[\![\pi]\!],h),B_4(u,h),0,B_5u+B_6h).$$
Denoting the isomorphism from
${_0Z}$ to  ${_0Y}$ defined by the left hand side of \eqref{linFBpert} by $L$
we may rewrite problem \eqref{linFBpert} in abstract form as
\begin{equation}\label{LBF} Lz = Bz +F.\end{equation}
The above estimates for the components of $B$  imply
$$|Bz|_Y \leq C|\nabla\phi|_{L_\infty}|z|_Z+ M [|u|_{L_p(H^1_p)}+
|[\![\pi]\!]|_{L_p} + |\nabla h|_{W^s_p(L_p)\cap L_p(H^1_p)}],
$$
with some constants $C>0$ depending only on the parameters and $M>0$, which depends also
on $|\nabla\phi|_{BUC^2}$. Let $\eta>0$ be given and suppose
$|\nabla\phi|_{L_\infty}<\eta$. By means of an interpolation argument we find a
constant $\gamma>0$, depending only on $p$ such that there is a constant $M(\eta)>$ such that
$$|Bz|_Y \leq C[2\eta +a^\gamma M(\eta)]|z|_Z,\quad z\in{_0Z}.$$
Choosing first $\eta>0$ and then $a>0$ small enough, we can solve \eqref{LBF}
by a Neumann series argument for $J=[0,a]$.

Since problem \eqref{linFBpert} is time-invariant, we may repeat these arguments
finitely many times, including the reduction procedure,  to solve \eqref{linFBpert} for $J=[0,a]$,
where now $a>0$ is arbitrary.

\subsection{General Bounded Geometries}
Here we use the method of localization.
By assumption, $\partial\Omega$ is of class $C^3$ and $\Sigma$ will even be real analytic, so in particular of class $C^4$.
Therefore we may cover $\Sigma$ by $N$ balls $B_{r/2}(x_j)$ with radius $r>0$ and centers
$x_j\in \Sigma$ such that $\Sigma\cap B_r(x_j)$ can be parameterized over the
tangent space $T_{x_j}\Sigma$ by a function
$\theta_j\in C^4$ such that $|\nabla\theta_j|_{L_\infty}\leq \eta$, with $\eta>0$ defined
as in the previous subsection. We extend these functions $\theta_j$ to all of
$T_{x_j}\Sigma$  retaining the bound on $\nabla\theta_j$. This way we have created $N$
bent half-spaces $\Sigma_j$ to which the result proved in the previous subsection applies. We also
suppose that $B_r(x_j)\subset\Omega$ for each $j$.  Set
$U:=\Omega\backslash \bigcup_{j=1}^N \bar{B}_{r/2}(x_j)$ and $U_j=B_r(x_j)$, $j=1,\ldots,N$.
The open set $U$ consists of one component $U_0$ characterized by
$\partial\Omega\subset\bar{U}_0$ and an open set say $U_{N+1}$,
which is interior to $\Sigma$, i.e.\ $U_j\subset\Omega_1$. Fix a partition of unity
$\{\varphi_j\}_{j=0}^{N+1}$ subject to the covering $\{U_j\}_{j=0}^{N+1}$ of $\Omega$, i.e.\
$\varphi_j\in\cD(\R^n)$, $0\leq\varphi_j\leq 1$, and $\sum_{j=0}^{N+1}\varphi_j \equiv 1$.
Note that $\varphi_0=1$ in a neighborhood of $\partial\Omega$. Let $\tilde{\varphi}_j$ denote cut-off functions with support in $U_j$ such that $\tilde{\varphi}_j=1$ on the support of $\varphi_j$, and set $b_j=b\tilde{\varphi}_j$.

Let $z:=(u,\pi,q,h)$ with $q=[\![\pi]\!]$ be a solution of \eqref{linFB} where we assume w.l.o.g.\ $u_0=h_0=h_1=f=f_d=0$, and $(u_b|\nu_{\partial\Omega})=[\![(u_\Sigma|\nu_\Sigma)]\!]=0$.
We then set $u_j=\varphi_j u$, $\pi_j=\varphi_j \pi$, $q_j=\varphi_jq$, $h_j=\varphi_j h$,
as well as $u_{bj}=\varphi_j u_b$, $u_{\Sigma j}=\varphi_j u_\Sigma$, $g_j=\varphi_j g$, and $g_{hj}=\varphi_j g_h$. Then for $j=1,\ldots,N$, the quadruples
$z_j:=(u_j,\pi_j,q_j,h_j)$ satisfy the problems
\begin{equation}\label{linFBj}
\begin{array}{rcll}
\rho\partial_tu_j -\mu\Delta u_j+\nabla \pi_j & = & F_j(u,\pi) & \mbox{ in }\R^n\backslash\Sigma_j,\\[0.25em]
{\rm div}\ u_j & = & (\nabla\varphi_j|u) & \mbox{ in }\R^n\backslash\Sigma_j,\\[0.25em]
[\![-\mu([\nabla u_j]+[\nabla u_j]^{\sf T}) + q_j]\!]\nu_{\Sigma_j} - \sigma(\Delta_{\Sigma_j} h_j) \nu_{\Sigma_j} & = & g_j+G_j(u) & \mbox{ on }\Sigma_j,\\[0.25em]
[\![u_j]\!] = u_{\Sigma j},\quad q_j & = & [\![\pi_j]\!] & \mbox { on }\Sigma_j,\\[0.25em]
\partial_th_j-(u_j|\nu_{\Sigma_j})+(b_j|\nabla_\Sigma h_j) & = & g_{hj}+G_{hj}(h) & \mbox{ on }\Sigma_j,\\[0.25em]
u_j(0)=0 \; \mbox{ in }\R^n\backslash\Sigma_j, \quad h_j(0) & = & 0 & \mbox{ on }\Sigma_j.
\end{array}
\end{equation}
Here we used the abbreviations
$$F_j(u,\pi)=[\nabla\varphi_j]\pi
-\mu[\Delta,\varphi_j]u,$$
$$G_j(u)=[\![-\mu(\nabla \varphi_j\otimes u+ u\otimes\nabla\varphi_j)]\!] \nu_{\Sigma_j}
-\sigma[\Delta_\Sigma,\varphi_j]h\nu_{\Sigma_j},$$
and
$$ G_{hj}(h) = (b_j|\nabla_{\Sigma} \varphi_j)h.$$
For $j=0$ we have the standard one-phase Stokes problem with parameters
$\rho_2,\mu_2$ on $\Omega$ with Dirichlet boundary conditions on $\partial\Omega$, i.e.
\begin{equation*}
\begin{aligned}
\rho_2\partial_tu_0 -\mu_2\Delta u_0+\nabla \pi_j&=  F_0(u,\pi)&\quad
\mbox{ in }&\Omega,\\
{\rm div}\ u_0&=  (\nabla\varphi_0|u),&\quad
\mbox{ in }&\Omega,\\
u_0&=u_{b0} &\quad \mbox{ on }&\partial\Omega,\\
u_0(0)&=0&\quad\mbox{ in }&\Omega.
\end{aligned}
\end{equation*}
For $j=N+1$ we obtain the one-phase Stokes problem on
$\R^n$ with parameters $\rho_1,\mu_1$, i.e.
\begin{equation*}
\begin{aligned}
\rho_1\partial_tu_{N+1} -\mu_1\Delta u_{N+1}+\nabla \pi_{N+1}&=  F_{N+1}(u,\pi)&\quad
\mbox{ in }&\R^n,\\
{\rm div}\ u_{N+1}&=  (\nabla\varphi_{N+1}|u)&\quad
\mbox{ in }&\R^n,\\
u_{N+1}(0)&=0&\quad \mbox{ in }&\R^n.
\end{aligned}
\end{equation*}

Concentrating on $j=1,\ldots,N$, we first note that
$[\Delta,\varphi_j]$ are differential operators of order 1, hence if $u\in {_0Z}_u$ then
$$[\Delta,\varphi_j]u\in {_0H_p^{1/2}}(J;L_p(\R^n)^n)\cap L_p(J; H^1_p(\R^n\backslash\Sigma_j)^n).$$
Since $f=f_d=0$ the pressure $\pi$ belongs to
$$\pi \in {_0H}^\alpha_p(J;L_p(\R^n))\cap L_p(J; H^1_p(\R^n\backslash\Sigma_j)),$$
by Corollary \ref{timeregpressure}, hence we have
$$F_j(u,\pi)\in {_0H}^\alpha_p(J;L_p(\R^n))^n\cap L_p(J; H^{1}_p(\R^n\backslash\Sigma_j))^n,$$
for some fixed $0<\alpha<\frac{1}{2}-\frac{1}{2p}.$
 Similarly we have
$$\nabla\varphi_j(u|\nu_{\Sigma_j}) +(\nabla\varphi_j|\nu_{\Sigma_j})u \in
{_0W}^{1-1/2p}_p(J;L_p(\Sigma_j)^n)\cap L_p(J; W^{2-1/p}_p(\Sigma_j)^n),$$
and since $[\Delta_{\Sigma_j},\varphi_j]$ is of order 1 as well, we obtain
$$[\Delta_\Sigma,\varphi_j]h\in {_0H}^1_p(J;W^{1-1/p}_p(\Sigma_j))
\cap L_p(J; W^{2-1/p}_p(\Sigma_j)).$$
This shows that we  have
$$G_j(u)\in {_0W}^{1-1/2p}_p(J;L_p(\Sigma_j)^n)\cap L_p(J; W^{2-1/p}_p(\Sigma_j)^n).$$
The terms $G_{hj}(h)$ do not have more regularity, however, the Banach algebra property yields the estimate
$$|G_{hj}(h)|_{Y^0_u}\leq C |b|_{Y^0_u}|h|_{Y^0_u}\leq C|b|_{Y^0_u}a^\gamma|h|_{Z_h},$$
with an appropriate exponent $\gamma>0$.
Next we decompose
$$F_j(u,\pi)= \tilde{F}_j(u,\pi) +\nabla\psi_j,$$
such that ${\rm div} \tilde{F}_j(u,\pi)=0$ in $\R^n\backslash\Sigma_j$ and $([\![\tilde{F}_j(u,\pi)]\!]|\nu_{\Sigma_j})=0$ on $\Sigma_j$.
Thus $\tilde{F}_j(u,\pi)$ is the Helmholtz projection of $F_j(u,\pi)$ in $\R^n$.
Then
$$\tilde{F}_j(u,\pi)\in {_0H}^\alpha_p(J;L_p(\R^n))^n\cap L_p(J; H^{2\alpha}_p(\R^n))^n.$$
Also, we decompose $u_j=\tilde{u}_j +\nabla\phi_j$, where  $\phi_j$ solves the transmission problem
\begin{equation*}
\begin{aligned}
\Delta\phi_j&=(\nabla\varphi_j|u)&\quad \mbox{ in }&\R^n\backslash\Sigma_j,\\
[\![\rho\phi_j]\!]&=0&\quad \mbox{ on }&\Sigma_j,\\
[\![\partial_{\nu_{\Sigma_j}}\phi_j]\!]&=0&\quad \mbox{ on }&\Sigma_j.
\end{aligned}
\end{equation*}
Note that
\begin{equation}\label{reg-nabla-phi}
\nabla\phi_j\in {_0H}^1_p(J; H^1_p(\R^n\backslash\Sigma_j)^n)\cap L_p(J;H^3_p(\R^n\backslash\Sigma_j)^n),
\end{equation}
by Theorems \ref{thmtrans0} and \ref{thmtrans2}, since $\Sigma_j$ is smooth. The jump of its trace on $\Sigma_j$ then belongs to
$$[\![\nabla\phi_j]\!]\in {_0H}^1_p(J; W^{1-1/p}_p(\Sigma_j)^n)\cap L_p(J;W^{3-1/p}_p(\Sigma_j)^n),$$
and its normal part vanishes, by construction. Further we have
$$[\![\mu\nabla^2\phi_j]\!]\in {_0W}^{1-1/2p}_p(J; L_p(\Sigma_j)^{n\times n})\cap L_p(J;W^{2-1/p}_p(\Sigma_j)^{n\times n}).$$
Then we set
$$\tilde{\pi}_j=\pi_j -\psi_j +\rho\partial_t \phi_j -\mu\Delta\phi_j,$$
and we observe that on $\Sigma_j$
$$\tilde{q}_j:=[\![\tilde{\pi}_j]\!]=[\![\pi_j]\!] -[\![\mu\Delta\phi_j]\!]=[\![\pi_j]\!] -[\![\mu(\nabla\varphi_j|u)]\!],$$
since by construction $\psi_j$ and $\rho\phi_j$ have no jump across $\Sigma_j$.
Now the quadrupel $\tilde{z}_j:=(\tilde{u}_j,\tilde{\pi}_j,\tilde{q}_j,h_j)$ satisfies the problem
\begin{equation}\label{modlinFBj}
\begin{array}{rcll}
\rho\partial_t\tilde{u}_j -\mu\Delta \tilde{u}_j+\nabla \tilde{\pi}_j & = & \tilde{F}_j(u,\pi) & \mbox{ in }\R^n\backslash\Sigma_j,\\[0.25em]
{\rm div}\ \tilde{u}_j & = & (\nabla\varphi_j|u) & \mbox{ in }\R^n\backslash\Sigma_j,\\[0.25em]
[\![-\mu([\nabla \tilde{u}_j]+[\nabla \tilde{u}_j]^{\sf T}) + \tilde{q}_j]\!]\nu_{\Sigma_j} - \sigma(\Delta_{\Sigma_j} h_j) \nu_{\Sigma_j} & = & g_j+\tilde{G}_j(u) & \mbox{ on }\Sigma_j,\\[0.25em]
[\![\tilde{u}_j]\!] = u_{\Sigma j} - [\![\nabla\phi_j]\!],\quad \tilde{q}_j & = & [\![\tilde{\pi}_j]\!] & \mbox { on }\Sigma_j,\\[0.25em]
\partial_th_j-(\tilde{u}_j|\nu_{\Sigma_j})+(b_j|\nabla_\Sigma h_j) & = & g_{hj}+\tilde{G}_{hj}(h) & \mbox{ on }\Sigma_j,\\[0.25em]
\tilde{u}_j(0)=0,\quad \mbox{in}\ \R^n\backslash\Sigma_j,\quad h_j(0) & = & 0 & \mbox{ on }\Sigma_j.
\end{array}
\end{equation}
Here $\tilde{G}_j$ and $\tilde{G}_{hj}$ are given by
$$\tilde{G}_j(u)= G_j(u)+2[\![\mu\nabla^2\phi_j]\!]\nu_{\Sigma_j}-[\![\mu(\nabla\varphi_j|u)]\!]\nu_{\Sigma_j}$$
and
$$\tilde{G}_{hj}(h) = G_{hj}(h) + \partial_{\nu_{\Sigma_j}}\phi_j.$$
For the remaining charts with index $j=0,N+1$, i.e.\ the one-phase problems, the procedure is similar. In case $j=0$ we use the regularity
$$\nabla\phi_0\in H^1_p(J;H^1_p(\Omega)^n)\cap H^{1/2}_p(J;H^2_p(\Omega)^n)$$
instead of \eqref{reg-nabla-phi}.

We write \eqref{modlinFBj} abstractly as
$$ L_j \tilde{z}_j = H_j + B_j z,$$
and by Theorem \ref{linearproblem} for bent interfaces we obtain an estimate of the form
$$|\tilde{z}_j|_{\EE}\leq C_0(|H_j|_{\FF} + |B_j z|_{\FF}),$$
with some constant $C_0$ independent of $j$. Here $\EE$ means the space of solutions and $\FF$ the space of data. Since all components of $B_jz$ (except for $G_{hj}(h)$) have some extra regularity, there is an exponent $\gamma>0$ and a constant $C_1$ independent of $j$ such that
$$|B_j z|_{\FF}\leq a^\gamma C_1|z|_{\EE}.$$
In addition, by Corollary \ref{timeregpressure} we obtain
$$|\tilde{\pi}_j|_{H^\alpha_p(J;L_p(\Omega))} \leq C_2 (|H_j|_{\FF} + |B_j z|_{\FF})\leq C_2 (|H_j|_{\FF} + a^\gamma C_1C_2|z|_{\EE}).$$
This in turn implies
$$|\partial_t\phi_j|_{H^\alpha_p(J;L_p(\Omega))}\leq  C_2 |H_j|_{\FF} + a^\gamma C_3|z|_{\EE},$$
and then also
$$|z_j|_{\EE}\leq C_4|H_j|_{\FF} + a^\gamma C_5|z|_{\EE}.$$
Summing over all $j$ yields $z=\sum_j z_j$, hence
$$|z|_{\EE}\leq C_6|H|_{\FF} + a^\gamma C_7|z|_{\EE}.$$
Therefore, choosing the length $a$ of the time interval small enough, we obtain the a priori estimate
\begin{equation}
\label{linFBapriori}
|z|_{\EE}\leq C_8|H|_{\FF}.
\end{equation}
Since the equations under consideration are time invariant, repeating this argument finitely many times we may conclude that the operator
$L:{_0\EE}\to{_0\FF}$ which maps solutions to their data is injective and has closed range, i.e.\ $L$ is a semi-Fredholm operator.

\bigskip

It remains to prove surjectivity of $L$. For this we employ the continuation method for semi-Fredholm operators. The estimates are uniform in the densities $\rho_j$ and the viscosities $\mu_j$, as long as these parameters are bounded and bounded away from zero. Hence $L=L(\rho_1,\rho_2,\mu_1,\mu_2)$ is surjective, if $L(1,1,1,1)$ has this property. Next we introduce an artificial continuation parameter $\tau\in[0,1]$ by replacing the equation for the free boundary $h$ with
$$\partial_t h+ \tau(-\Delta_\Sigma)^{1/2}h -(1-\tau)\{(u|\nu_\Sigma)-(b|\nabla_\Sigma h)\}=g_h\quad \mbox{on}\ \Sigma.$$
The arguments in \cite{PrSi09a,PrSi09b} show that the corresponding problem is well-posed for each $\tau\in[0,1]$ in the case of a flat interface, with bounds independent of $\tau\in[0,1]$. Therefore the same is true for bent interfaces and then by the above estimates also for a general geometry.
Thus we only need to consider the case $\rho_1=\rho_2=\mu_1=\mu_2=\tau=1$.

To prove surjectivity in this case, note that the equation for $h$ is decoupled from those for $u$ and $\pi$, and it is uniquely solvable in the right regularity class because of maximal regularity for the Laplace-Beltrami operator. So we may set now $h=0$. Next we solve the parabolic transmission problem to remove the jump of $u$ across $\Sigma$ and the inhomogeneity $g$ in the stress boundary condition. The remaining problem is a one-phase Stokes problem on the domain $\Omega$, which is well-known to be solvable. This shows that we have surjectivity in the case $\rho_1=\rho_2=\mu_1=\mu_2=\tau=1$, hence also for arbitrary $\rho$, $\mu$ and $\tau=0$ and the proof of Theorem~\ref{linearproblem} is complete.

\bigskip

We close this section with a remark on the situation, which occurs in the treatment of two-phase flows with variable surface tension.
To be precise, Theorem~\ref{linearproblem} may be generalized to this situation by means of the following
\begin{corollary}
\label{variable-st}
The statements of Theorem~\ref{linearproblem} remain valid, if the surface tension is not a constant but a function $\sigma \in C^{0,1/2}(J, BC(\Sigma, \rr_+)) \cap BC(J, C^{0,1}(\Sigma, \rr_+))$.
\end{corollary}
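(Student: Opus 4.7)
The plan is to treat the variable surface tension as a perturbation of a constant one and reduce to Theorem~\ref{linearproblem} via a Neumann series, combined with iteration in time. Fix a reference constant $\sigma_* := \sigma(0,p_0) > 0$ for some $p_0 \in \Sigma$. Writing $\sigma = \sigma_* + (\sigma - \sigma_*)$, the stress interface condition in \eqref{linFB} takes the form
$$[\![-\mu(\nabla u+[\nabla u]^{\sf T})+I\pi]\!]\nu_\Sigma - \sigma_*(\Delta_\Sigma h)\nu_\Sigma = g + (\sigma-\sigma_*)(\Delta_\Sigma h)\nu_\Sigma,$$
which has the structure of \eqref{linFB} with constant surface tension $\sigma_*$ and modified inhomogeneity $\tilde g := g + (\sigma-\sigma_*)(\Delta_\Sigma h)\nu_\Sigma$.

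The key technical step is a multiplier estimate on the trace space $Y_u^1 = W^{1/2-1/2p}_p(J;L_p(\Sigma)) \cap L_p(J;W^{1-1/p}_p(\Sigma))$ for the multiplication $v \mapsto \sigma v$. The spatial component is standard: $C^{0,1}(\Sigma)$ functions multiply $W^{1-1/p}_p(\Sigma)$ continuously since the latter is a Slobodeckii space of fractional order in $(0,1)$ and $\Sigma$ is compact. For the temporal component, the decomposition $\sigma(t)v(t) - \sigma(s)v(s) = (\sigma(t) - \sigma(s))v(s) + \sigma(s)(v(t) - v(s))$ in the intrinsic Slobodeckii seminorm combined with $|\sigma(t) - \sigma(s)| \leq C|t-s|^{1/2}$ yields a weight $|t-s|^{-1/2}$ that remains integrable on the bounded interval $J$; this is exactly where the Hölder exponent $1/2$ is used. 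Hence multiplication by $\sigma$ is bounded on $Y_u^1$, and since $\Delta_\Sigma h \in Y_u^1$ for $h \in Z_h$, we obtain $(\sigma - \sigma_*)(\Delta_\Sigma h)\nu_\Sigma \in Y_u^1$ with norm controlled essentially by $\|\sigma - \sigma_*\|_{L_\infty(J\times\Sigma)} |h|_{Z_h}$ plus lower-order terms involving the Hölder seminorms.

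To close the argument, one needs smallness of this multiplier. On a short subinterval $J_\delta = [0,\delta]$ the temporal Hölder estimate gives $\|\sigma(t,\cdot) - \sigma(0,\cdot)\|_{L_\infty(\Sigma)} \leq C\delta^{1/2}$; combined with a finite partition of unity subordinate to patches $\Sigma_j \subset \Sigma$ of diameter $\leq \varepsilon$ and the spatial Lipschitz bound, one arranges $\|\sigma - \sigma_*\|_{L_\infty(J_\delta \times \Sigma_j)} \leq C(\delta^{1/2} + \varepsilon)$ on each patch. Inserting this bound into the localization scheme of Section~\ref{sect-lp} produces an a priori estimate of the form $|z|_\EE \leq C_0 |H|_\FF + C_1(\delta^{1/2} + \varepsilon)|z|_\EE$, which for $\delta,\varepsilon$ sufficiently small closes a Neumann series and gives a unique solution on $J_\delta$. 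Since none of the compatibility conditions (a)--(h) of Theorem~\ref{linearproblem} involves $\sigma$, the solution trace at $t=\delta$ is admissible data for a restart, and finitely many iterations cover the whole interval $J$. The main obstacle I anticipate is the temporal multiplier estimate: the exponent $1/2$ on $\sigma$ in time is essentially sharp to preserve $W^{1/2-1/2p}_p$, leaving little slack and forcing the double localization in space and time.
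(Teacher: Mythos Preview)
Your approach is essentially the paper's: freeze $\sigma$ at chart centers, push the resulting term $(\sigma-\sigma_j)(\Delta_\Sigma h)\nu_\Sigma$ to the right-hand side, estimate it in $Y^1_u$, and absorb it via smallness of the chart radius $r$ and the time interval. The paper carries this out directly inside the localization scheme of Section~\ref{sect-lp}, freezing $\sigma_j=\sigma(0,x_j)$ on each chart $U_j$ and invoking the Banach algebra property of $Y^1_u$ (for $p>n+2$) to get $|G_{\sigma j}(h)|_{Y^1_u}\leq C(a^{\gamma_1}+r^{\gamma_2})|z|_{\EE}$; your multiplier analysis is a more explicit version of the same bound.

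There is one inconsistency in your write-up that you should fix: you begin by freezing at a \emph{single} global constant $\sigma_*=\sigma(0,p_0)$, but then claim $\|\sigma-\sigma_*\|_{L_\infty(J_\delta\times\Sigma_j)}\leq C(\delta^{1/2}+\varepsilon)$ on patches $\Sigma_j$ of diameter $\varepsilon$. This is false for patches far from $p_0$; the spatial Lipschitz bound only controls $\sigma(t,p)-\sigma(t,p_j)$ with $p_j$ the patch center, not $\sigma(t,p)-\sigma_*$. What you actually need---and what the paper does---is a \emph{chart-wise} freezing constant $\sigma_j=\sigma(0,x_j)$, so that the linear operator on the left in each local problem carries its own $\sigma_j$, and the perturbation $\varphi_j(\sigma-\sigma_j)\Delta_\Sigma h$ is small on $U_j$ by the Lipschitz/H\"older bounds. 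Once you make that correction, the argument closes exactly as in Section~\ref{sect-lp} and the iteration in time is fine since no compatibility condition involves $\sigma$.
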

This generalization is possible, since the variability of the surface-tension may be handeled by a freezing technique during the localization procedure in the previous paragraphs.
The only difference compared to the case of a constant surface-tension occurs in the jump condition of the normal stress in problem (\ref{linFBj}), which has to be modified to
\begin{equation*}
[\![-\mu(\nabla u_j+[\nabla u_j]^{\sf T})]\!]\nu_{\Sigma_j} +\q_j\nu_{\Sigma_j}
-\sigma_j(\Delta_{\Sigma_j} h_j) \nu_{\Sigma_j}=g_j+G_j(u)+G_{\sigma j}(h)
\; \mbox{ on } \Sigma_j
\end{equation*}
with $\sigma_j = \sigma(0,x_j) > 0$ and $$G_{\sigma j}(h) = \varphi_j(\sigma - \sigma_j)(\Delta_{\Sigma} h) \nu_\Sigma.$$
Now, if the radius $r > 0$ of the balls $U_j$, $j = 1, \dots, N$ is sufficiently small, we may use the H{\"o}lder/Lipschitz constants $L_1, L_2 > 0$ of $\sigma$ to estimate
\begin{equation*}
{|\varphi_j(\sigma - \sigma_j)|}_{Y^1_u} \leq C_9 (L_1 a^{\gamma_1} + L_2 r^{\gamma_2})
\end{equation*}
with exponents $\gamma_1, \gamma_2 > 0$ and a constant $C_9 > 0$ depending only on the spatial dimension $n$ and the geometry of $\Sigma$.
Since $Y^1_u$ is a Banach algebra, we obtain
\begin{equation*}
{|G_{\sigma j}(h)|}_{Y^1_u} \leq C_{10} {|\varphi_j(\sigma - \sigma_j)|}_{Y^1_u} {|\Delta_{\Sigma} h|}_{Y^1_u} \leq C_{11} (a^{\gamma_1} + r^{\gamma_2}) {|z|}_{\EE}
\end{equation*}
and the a priori estimate (\ref{linFBapriori}) remains valid, if both $a$ and $r$ are choosen sufficiently small.

\section{Local Well-Posedness}\label{sect-lwp}
We now turn to problem (\ref{NS-2phase}),
and show its local well-posedness for given initial data
$\Gamma_0\in W^{3-2/p}_p$ and $u_0\in W^{2-2/p}_p(\Omega\backslash\Gamma_0)^n$, which are subject to the compatibility conditions
\begin{align}\label{compatibilities}
\begin{split}
{\rm div} \; u_0=0\; \mbox{ in } \Omega\backslash \Gamma_0,
\quad u_0=0 \mbox{ on } \partial\Omega,\\
[\![\cP_{\Gamma_0}\mu(\nabla u_0+[\nabla u_0]^{\sf{T}}) \nu_{\Gamma_0}]\!]=0,\; [\![u_0]\!]=0\; \mbox{
on } \Gamma_0,
\end{split}
\end{align}
where
$\cP_{\Gamma_0}=I-\nu_{\Gamma_0}\times\nu_{\Gamma_0} $.
According to the considerations in Section~\ref{sect-tfd}
we approximate $\Gamma_0$ for any prescribed $\eta > 0$
by a real analytic hypersurface $\Sigma$,
in the sense that $d_H(\cN^2\Sigma,\cN^2\Gamma_0) < \eta$,
and $\Gamma_0$ is parametrized over $\Sigma$ by
$h_0 \in W^{3-2/p}_p(\Sigma)$.
Employing the transformation from Section~\ref{sect-tfd} to the fixed domain,
 it is sufficient to prove the local well-posedness
of the quasi-linear problem (\ref{tfbns2}). We keep the notation and denote by $u$ and $\pi$ the transformed velocity field and pressure, respectively.

We are interested in solutions of \eqref{tfbns2} having maximal regularity,
and hence, we determine a first approximation of the local solution of (\ref{tfbns2})
by using Theorem~\ref{linearproblem}.
Since the time traces
\begin{equation*}
(\nabla|u_0) \in W^{1-2/p}_p(\Omega\backslash\Sigma)
\quad \textrm{and} \quad
\cP_\Sigma[\![\mu(\nabla u_0 + {[\nabla u_0]}^{\sf{T}})\nu_\Sigma]\!] \in W^{1-3/p}_p(\Sigma)
\end{equation*}
will not be trivial due to the transformation,
and since the compatibility conditions imposed in Theorem~\ref{linearproblem} have to be satisfied,
we must be able to construct extensions in the right regularity classes to be used as the right-hand sides in (\ref{linFB}).
\begin{proposition}\label{lwp-extensions}
Let $p>3$, $\partial\Omega\in C^3$, and set $J=[0,a]$.
Let $\Sigma \subset \Omega$ be a closed hypersurface of class $C^3$.
Then $f_0 \in \dot{H}_p^{-1}(\Omega)\cap  W^{1-2/p}_p(\Omega\backslash\Sigma)$ and $g_0 \in W^{1-3/p}_p(\Sigma)$ admit extensions
\begin{align*}
f & \in H^1_p(J;\dot{H}^{-1}_p(\Omega\backslash\Sigma)) \cap L_p(J;H^1_p(\Omega\backslash\Sigma)) \quad \mbox{and} \\
g & \in W^{1/2-1/2p}_p(J;L_p(\Sigma)) \cap L_p(J;W^{1-1/p}_p(\Sigma))
\end{align*}
with $f(0) = f_0$ and $g(0) = g_0$.
\end{proposition}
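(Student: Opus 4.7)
The strategy is to recognise both conditions on $f_0$ and $g_0$ as exactly the trace spaces at $t=0$ of the corresponding anisotropic parabolic function classes, so that existence of extensions reduces to the standard converse to the trace theorem.

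First I would treat $g$. By the mixed-derivative trace theorem, the trace at $t=0$ of
\[W^{1/2-1/2p}_p(J;L_p(\Sigma))\cap L_p(J;W^{1-1/p}_p(\Sigma))\]
coincides with the real interpolation space $(L_p(\Sigma),W^{1-1/p}_p(\Sigma))_{1-1/(sp),p}$, where $s=1/2-1/2p$. A brief calculation gives $sp=(p-1)/2$ and $1-1/(sp)=(p-3)/(p-1)$, so the trace space equals $W^{(1-1/p)(p-3)/(p-1)}_p(\Sigma)=W^{1-3/p}_p(\Sigma)$, which is precisely where $g_0$ lives. An explicit extension is $g(t):=e^{-tL}g_0$ with $L:=(I-\Delta_\Sigma)^{1/2}$; this operator generates a bounded analytic semigroup on $L_p(\Sigma)$, has maximal $L_p$-regularity, and its real-interpolation domains reproduce the intermediate spaces needed to verify $g\in W^{1/2-1/2p}_p(J;L_p(\Sigma))\cap L_p(J;W^{1-1/p}_p(\Sigma))$.

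For $f$ I would follow the same scheme. The trace at $t=0$ of
\[H^1_p(J;\dot{H}^{-1}_p(\Omega\backslash\Sigma))\cap L_p(J;H^1_p(\Omega\backslash\Sigma))\]
is the real interpolation space $(\dot{H}^{-1}_p(\Omega\backslash\Sigma),H^1_p(\Omega\backslash\Sigma))_{1-1/p,p}$; interpolating phase-wise, the resulting spatial order is $-1+2(1-1/p)=1-2/p$, so the trace space is contained in $W^{1-2/p}_p(\Omega\backslash\Sigma)$. The hypothesis $f_0\in\dot{H}^{-1}_p(\Omega)$ encodes the global compatibility across $\Sigma$ that is preserved by evolutions with values in $\dot{H}^{-1}_p(\Omega)$. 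Taking $A$ to be a maximal-$L_p$-regularity realization of $I-\Delta$ with domain $H^1_p(\Omega\backslash\Sigma)$ under the natural transmission conditions across $\Sigma$ and homogeneous Neumann conditions on $\partial\Omega$, one then defines $f(t):=e^{-tA}f_0$ and verifies the claimed regularity via maximal regularity together with the identification of $D_A(1-1/p,p)$.

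The main obstacle I anticipate is the rigorous identification of the trace space for $f$: the time regularity is measured in the \emph{weak} space $\dot{H}^{-1}_p(\Omega\backslash\Sigma)=(\dot{H}^1_{p'}(\Omega\backslash\Sigma))^\ast$, while the assumption on $f_0$ uses the smaller space $\dot{H}^{-1}_p(\Omega)=(\dot{H}^1_{p'}(\Omega))^\ast$. One must show that real interpolation with the spatial part $H^1_p(\Omega\backslash\Sigma)$ precisely selects the intersection $\dot{H}^{-1}_p(\Omega)\cap W^{1-2/p}_p(\Omega\backslash\Sigma)$; this is a functional-analytic computation resting on the trace properties of $H^1_p(\Omega\backslash\Sigma)$ on $\Sigma$ and the compatibility identity from Section~\ref{sect-lp} obtained by pairing against $\phi\equiv 1$.
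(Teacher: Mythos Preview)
Your treatment of $g$ is essentially the paper's: both use the analytic semigroup generated by (a power of) the Laplace--Beltrami operator on $\Sigma$ and the standard trace/interpolation identification, so that part is fine.

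For $f$, however, the paper takes a genuinely different and more concrete route that sidesteps exactly the obstacle you flag at the end. Instead of constructing a semigroup on the pair $(\dot H^{-1}_p,\,H^1_p(\Omega\backslash\Sigma))$ and identifying its real-interpolation trace space, the paper first \emph{lifts} $f_0$ to a vector field: it solves the transmission problem $\Delta\phi_0=f_0$ in $\Omega\backslash\Sigma$ with $[\![\phi_0]\!]=[\![\partial_\nu\phi_0]\!]=0$ on $\Sigma$ and $\phi_0=0$ on $\partial\Omega$ (using the appendix results), obtaining $\phi_0\in H^2_p(\Omega)\cap W^{3-2/p}_p(\Omega\backslash\Sigma)$, and sets $v_0:=\nabla\phi_0\in W^{2-2/p}_p(\Omega\backslash\Sigma)^n$. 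Then $v_0$ is extended to $v\in H^1_p(J;L_p(\Omega)^n)\cap L_p(J;H^2_p(\Omega\backslash\Sigma)^n)$ by solving an ordinary parabolic transmission problem, for which the trace space is the standard $W^{2-2/p}_p$. Finally $f:=\operatorname{div} v$; the divergence of an $H^1_p(J;L_p)$ field is automatically $H^1_p(J;\dot H^{-1}_p)$, and $\operatorname{div} v\in L_p(J;H^1_p(\Omega\backslash\Sigma))$ from $v\in L_p(J;H^2_p)$. The point is that the hypothesis $f_0\in\dot H^{-1}_p(\Omega)$ is used \emph{structurally} (it guarantees solvability of the elliptic lift), not via an interpolation identity.

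Your abstract approach is not wrong in principle, but the step you yourself call the ``main obstacle'' --- showing that $(\dot H^{-1}_p(\Omega\backslash\Sigma),H^1_p(\Omega\backslash\Sigma))_{1-1/p,p}$ reproduces $\dot H^{-1}_p(\Omega)\cap W^{1-2/p}_p(\Omega\backslash\Sigma)$, with the two distinct negative-order spaces in play --- is genuinely delicate and is not established anywhere in the paper. The paper's lifting trick trades that functional-analytic computation for the well-understood trace theory of the heat equation, which is why it is the cleaner argument here.
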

\begin{proof}
We take $\phi_0 \in H^2_p(\Omega) \cap W^{3-2/p}_p(\Omega\backslash\Sigma)$ to be the unique solution of
\begin{equation*}
\left\{
\begin{array}{rcll}
\Delta\phi_0 & = & f_0 & \mbox{in}\ \Omega\backslash\Sigma, \\[0.25em]
[\![\phi_0]\!] & = & 0 & \mbox{on}\ \Sigma, \\[0.25em]
[\![\partial_\nu \phi_0]\!] & = & 0 & \mbox{on}\ \Sigma, \\[0.25em]
\phi_0 & = & 0 & \mbox{on}\ \partial\Omega,
\end{array}
\right.
\end{equation*}
which exists due to $f_0 \in \dot{H}^{-1}_p(\Omega) \cap W^{1-2/p}_p(\Omega\backslash\Sigma)$; cf.\ Theorems \ref{thmtrans} and \ref{thmtrans2}.
Setting $v_0 := \nabla\phi_0 \in W^{2-2/p}_p(\Omega\backslash\Sigma)^n$, the parabolic problem
\begin{equation*}
\left\{
\begin{array}{rclll}
\partial_t v - \Delta v & = & 0 & \mbox{in} & J\times\Omega\backslash\Sigma, \\[0.25em]
[\![v]\!] & = & 0 & \mbox{on} & J\times\Sigma, \\[0.25em]
[\![\partial_\nu v]\!] & = & e^{\Delta_\Sigma t} [\![\partial_\nu v_0]\!] & \mbox{on} & J\times\Sigma, \\[0.25em]
v & = & e^{\Delta_{\partial\Omega}t} \left({\left. v_0 \right|}_{\partial\Omega}\right) & \mbox{on} & J\times\partial\Omega, \\[0.25em]
v(0) & = & v_0 & \mbox{in} & \Omega\backslash\Sigma,
\end{array}
\right.
\end{equation*}
where $\Delta_\Sigma$, respectively on $\Delta_{\partial\Omega}$ denotes the Laplace-Beltrami operator on $\Sigma$ respectively $\partial\Omega$,
admits a unique solution $v \in H^1_p(J;L_p(\Omega)^n) \cap L_p(J;H^2_p(\Omega\backslash\Sigma)^n)$.
We may now define
\begin{equation*}
f := \mbox{div}\,v \in H^1_p(J;\dot{H}^{-1}_p(\Omega)) \cap L_p(J;H^1_p(\Omega\backslash\Sigma))
\end{equation*}
and by construction we have
\begin{equation*}
f(0) = \mbox{div}\,v_0 = \Delta\phi_0 = f_0.
\end{equation*}
Finally, we define  $g$ by means of $g=e^{\Delta_\Sigma t}g_0$ to the result
\begin{equation*}
g \in W^{1/2-1/2p}_p(J,L_p(\Sigma)) \cap L_p(J,W^{1-1/p}_p(\Sigma)),
\end{equation*}
by the properties of the analytic semigroup $e^{\Delta_\Sigma t}$.
\end{proof}

Next we introduce the linear operator
$L=(L_1,\ldots,L_4)$ defined by the left-hand side of (\ref{tfbns2}), i.e.
\begin{align*}
L_1(u,\pi)&:=  \rho\partial_t u - \mu\Delta u + \nabla\pi, \\
L_2(u) &: =  (\nabla|u), \\
L_3(u,q,h) & :=  [\![-\mu(\nabla u + {[\nabla u]}^{\sf{T}})]\!]\nu_\Sigma + q\nu_\Sigma - (\Delta_\Sigma h)\nu_\Sigma, \\
L_4(u,h) & :=  \partial_t h - (u|\nu_\Sigma)+ (b|\nabla_\Sigma h),
\end{align*}
and the nonlinearity
$N=(N_1,\ldots,N_4)$ defined by the right hand side of (\ref{tfbns2}), i.e.
\begin{align*}
N_1(u,\pi,h) & :=  F(h,u)\nabla u + M_4(h):\nabla^2 u + M_1(h)\nabla\pi ,\\
N_2(u,h) & :=  M_1(h):\nabla u, \\
N_3(u,h) & :=  G_\tau(h)\nabla u + (G_\nu(h)\nabla u + G_\gamma(h))\nu_\Sigma, \\
N_4(u,h) & :=  ([M_0(h)-I]\nabla_\Sigma h|u)+(u-b|\nabla_\Sigma h).
\end{align*}
For $J = [0,a]$ let the solution spaces be defined by
\begin{align*}
\mathbb{E}_1(a) &:= \{u \in H^1_p(J;L_p{(\Omega)}^n) \cap L_p(J;H^2_p{(\Omega\backslash\Sigma)}^n) : u = 0\ \textrm{on}\ \partial\Omega,\ [\![u]\!] = 0\}, \\
\mathbb{E}_2(a) &:= L_p(J;\dot{H}^1_p(\Omega\backslash\Sigma)), \\
\mathbb{E}_3(a) &:= W^{1/2-1/2p}_p(J;L_p(\Sigma)^n) \cap L_p(J;W^{1-1/p}_p(\Sigma)^n), \\
\mathbb{E}_4(a) &:= W^{2-1/2p}_p(J;L_p(\Sigma)) \cap H^1_p(J;W^{2-1/p}_p(\Sigma)) \cap L_p(J;W^{3-1/p}_p(\Sigma)).
\end{align*}
We abbreviate
$$\mathbb{E}(a) := \{(u,\pi,q,h) \in \mathbb{E}_1(a) \times \mathbb{E}_2(a) \times \mathbb{E}_3(a) \times \mathbb{E}_4(a) : [\![\pi]\!] = q\},$$
and equip $\mathbb{E}_1(a)$, $\mathbb{E}_2(a)$, $\mathbb{E}_3(a)$ and $\mathbb{E}_4(a)$
with their natural norms, which turn them into Banach spaces;
$\mathbb{E}(a)$ carries the natural norm of the underlying product space. A left subscript $0$ always means that the time trace of the function is zero whenever it exists.
Furthermore, the data spaces are defined by
\begin{align*}
\mathbb{F}_1(a) &:= L_p(J;{L_p(\Omega)}^n), \\
\mathbb{F}_2(a) &:= H^1_p(J;\dot{H}^{-1}_p(\Omega)) \cap L_p(J;H^1_p(\Omega\backslash\Sigma)), \\
\mathbb{F}_3(a) &:= W^{1/2-1/2p}_p(J;L_p{(\Sigma)}^n) \cap L_p(J;W^{1-1/p}_p{(\Sigma)}^n), \\
\mathbb{F}_4(a) &:= W^{1-1/2p}_p(J;L_p(\Sigma)) \cap L_p(J;W^{2-1/p}_p(\Sigma))
\end{align*}
and
$$\mathbb{F}(a) := \mathbb{F}_1(a) \times \mathbb{F}_2(a) \times \mathbb{F}_3(a) \times \mathbb{F}_4(a),$$
where we equip these spaces again with their natural norms. The generic elements of $\FF(a)$ are $(f,f_d,g,g_h)$.

To shorten the notation we set $z = (u,\pi,q,h) \in \mathbb{E}(a)$ and reformulate
the quasi-linear problem (\ref{tfbns2}) as
\begin{equation}\label{lwp-tfp}
Lz = N(z), \quad \quad (u(0),h(0)) = (u_0,h_0).
\end{equation}
From Section~\ref{sect-lp} we already know
that $L: \mathbb{E}(a) \to \mathbb{F}(a)$ is bounded and linear and that
$L: {_0\mathbb{E}}(a) \to {_0\mathbb{F}}(a)$ is an isomorphism, for each $a>0$.

Concerning the nonlinearity $N$, the following result has been shown in \cite[Proposition 4.1]{PrSi09b} for the case where $\Sigma$ is a graph over $\R^n$.

\begin{proposition}
\label{pro:estimates-K} Suppose $p>n+2$ and  $b\in \FF_4(a)^n$. Then
\begin{equation}
\label{K-analytic}
N\in C^\omega(\EE(a)\,,\FF(a)),\quad a>0.
\end{equation}
Let $DN(u,\pi,q,h)$ denote the Fr\'echet derivative of
$N$ at $(u,\pi,q,h)\in \EE(a)$. Then
$ DN(u,\pi,q,h) \in\Li({_0}\EE(a), {_0}\FF(a))$,
and for any number $a_0>0$
there is a positive constant
$M_0=M_0(a_0,p)$ such that
\begin{equation*}
\label{Frechet}
\begin{split}
 &|DN(u,\pi,q,h)|_{\Li({_0}\EE(a),\, {_0}\FF(a))} \\
 &\le M_0\big[
  |b- u|_{BC(J;BC)\,\cap\, \FF_4(a)}
 +|(u,\pi,q,h)|_{\EE(a)}  \big]\\
& + M_0\big[\big(|\nabla h|_{BC(J;BC^1)}
+|h|_{\EE_4(a)}+|u|_{BC(J;BC)}\big)|u|_{\EE_1(a)}\big]\\
&+ M_0\big[ P(|\nabla h|_{BC(J;BC)})|\nabla h|_{BC(J;BC)}
+Q\big(|\nabla h|_{BC(J;BC^1)},|h|_{\EE_4(a)}\big)|h|_{\EE_4(a)}\big]
\end{split}
\end{equation*}
for all $(u,\pi,q,h)\in \EE(a)$ and all $a\in (0,a_0]$.
Here, $P$ and $Q$ are fixed polynomials with coefficients
equal to one.
\end{proposition}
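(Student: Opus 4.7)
The plan is to establish analyticity and the derivative estimate componentwise, by decomposing each $N_i$ as a finite product of analytic building blocks in $h$ together with at most linear factors in $(u,\pi)$. The building blocks $M_0(h)$, $M_1(h),\dots,M_4(h)$, $\alpha(h)$, $\beta(h)$ and $H_\Gamma(h)$ are all polynomial expressions in $(I+\theta_h')^{-1}$, $(I-hL_\Sigma)^{-1}$, $\nabla_\Sigma h$ and $\nabla_\Sigma^2 h$. The two inverses are well-defined and real analytic in $h$ on the open set
\[
U := \{\, h\in \EE_4(a) : |h|_\infty + |\nabla_\Sigma h|_\infty < \delta\,\}
\]
via convergent Neumann series, for $\delta>0$ small. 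Because $p>n+2$, Sobolev embedding yields $\EE_4(a)\hookrightarrow BC(J;BC^2(\Sigma))$ and both $Y_u^0$ and $Y_u^1$ are Banach algebras; this guarantees that $U$ is open and that the relevant products of factors land in the correct target spaces.

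To verify $N:\EE(a)\to\FF(a)$ I would treat each component in turn. The mapping properties for $N_1, N_3, N_4$ are direct products in trace spaces and rely on the Banach-algebra property together with the pointwise multiplication inequality
\[
|a\psi|_{W^s_p(\Sigma)} \le |a|_{L_\infty}|\psi|_{W^s_p(\Sigma)} + c_{s,p}|\nabla_\Sigma a|_{L_\infty}|\psi|_{L_p(\Sigma)}
\]
already exploited in the bent-interface analysis of Section~\ref{sect-lp}, which separates the $h$-factor from the $u$-factor. The delicate component is $N_2(u,h) = M_1(h):\nabla u$, which must belong to $H^1_p(J;\dot H^{-1}_p(\Omega))\cap L_p(J;H^1_p(\Omega\setminus\Sigma))$. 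The second factor is immediate from the algebra property. For the first factor I would write $M_1(h):\nabla u = \operatorname{div}(M_1(h)u) - (\operatorname{div} M_1(h)\,|\,u)$ and test against $\phi\in \dot H^1_{p'}(\Omega)$; the time derivative is then transferred onto $u\in H^1_p(J;L_p(\Omega))$ and onto $h$ through the chain rule and the analytic dependence of $M_1$ on $h$, both of which are controlled in the target norm. Analyticity of $N$ follows once the mapping properties are in hand, since each component is a finite algebraic expression in the analytic building blocks and the linear variables $(u,\pi)$.

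The Fréchet derivative is obtained by formal differentiation: every summand of $DN(u,\pi,q,h)(\bar u,\bar\pi,\bar q,\bar h)$ is either linear in $(\bar u,\bar\pi,\bar q)$ with a coefficient depending analytically on $h$, or linear in $\bar h$ with a coefficient depending linearly on $(u,\pi)$. Using the splitting estimate above, the linear-in-$z$ contributions are absorbed into the factor $M_0 |(u,\pi,q,h)|_{\EE(a)}$; the cross contributions coming from $-(u|\nabla u)$, $(u-b|\nabla_\Sigma h)$ and $([M_0(h)-I]\nabla_\Sigma h|u)$ produce the terms $|b-u|_{BC\cap\FF_4}$ and $(|\nabla h|_{BC(BC^1)} + |h|_{\EE_4(a)} + |u|_{BC})|u|_{\EE_1(a)}$ displayed in the statement; and the curvature contribution $G_\gamma(h)=\sigma(H_\Gamma(h)-H_\Gamma'(0)h)$ is by construction quadratic in $h$ via a Taylor expansion, which produces the closing factor $Q(|\nabla h|_{BC(BC^1)},|h|_{\EE_4(a)})|h|_{\EE_4(a)}$. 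The polynomial $P$ in the remaining summand arises from summing the geometric series that represent $M_0(h), M_1(h)$ and their first derivatives in $h$.

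The main obstacle I expect is the bookkeeping required for the tangential and normal stress nonlinearities $G_\tau(h)\nabla u$ and $G_\nu(h)\nabla u$, whose Fréchet derivatives contain many summands carrying simultaneously the factor $\nabla_\Sigma h$, a jump of $\nabla u$ across $\Sigma$, and the matrix $M_0(h)$ or $M_1(h)$. One must verify for each such summand that the right factor carries the $Y_u^1$-norm of $\nabla u$ or $[\![\pi]\!]$ while the left factor is controlled in $L_\infty$ together with one derivative, so that the multiplier estimate above is applicable. Once this routine but lengthy check is carried out, the claimed bound on $|DN|_{\Li({_0}\EE(a),\,{_0}\FF(a))}$ follows by summing the contributions of all components.
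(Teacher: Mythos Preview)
Your proposal is correct and follows essentially the same approach as the paper: the paper does not give a self-contained proof but refers to \cite[Proposition~4.1]{PrSi09b} for the flat-interface case and remarks that the argument carries over because the nonlinearity is polynomial in $(u,\pi)$, the embeddings $W^{2-2/p}_p\hookrightarrow BUC^{1+\alpha}$ and $W^{3-2/p}_p\hookrightarrow BUC^{2+\alpha}$ make $\EE_3(a)$ and $\FF_4(a)$ Banach algebras, and the operators $M_j(h)$ are analytic in $h$ via Neumann series. Your outline reproduces exactly these ingredients, and your explicit treatment of the $\FF_2(a)$-regularity of $N_2$ via the divergence form $M_1(h){:}\nabla u=\operatorname{div}(M_1(h)u)-(\operatorname{div}M_1(h)\,|\,u)$ is the standard device used in \cite{PrSi09b}.
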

\noindent
The proof carries over to the general case considered here. The basic ingredients are still the polynomial structure of the nonlinearity $N$ w.r.t.\ $u$ and $\pi$, which is the same as in \cite{PrSi09b}, and the embeddings
$$ W^{2-2/p}_p(\Omega\backslash\Sigma)\hookrightarrow BUC^{1+\alpha}(\Omega\backslash\Sigma),\quad
W^{3-2/p}_p(\Sigma)\hookrightarrow BUC^{2+\alpha}(\Sigma),$$
with $\alpha =1-(n+2)/p>0$, which show that $\EE_3(a)$ and $\FF_4(a)$ are Banach algebras. The difference lies only in the operators $M_j(h)$ which are more complicated in the case of general domains, but analytic in $h$.

Now, we are able to establish local well-posedness.

\begin{theorem}\label{wellposed}
Fix $p>n+2$, let $\partial\Omega\in C^3$,
and suppose
$$\Gamma_0\in
W^{3-2/p}_p,\quad u_0\in W^{2-2/p}_p(\Omega\backslash\Gamma_0)^n.$$
Assume the compatibility conditions
\begin{eqnarray*}&&{\rm div} \; u_0=0\; \mbox{ in } \Omega\backslash \Gamma_0,
\quad u_0=0 \mbox{ on } \partial\Omega,\\
&&[\![\cP_{\Gamma_0}\mu(\nabla u_0+[\nabla u_0]^{\sf{T}}) \nu_{\Gamma_0}]\!]=0,\; [\![u_0]\!]=0\; \mbox{
on } \Gamma_0,\end{eqnarray*}
where
$\cP_{\Gamma_0}=I-\nu_{\Gamma_0}\times\nu_{\Gamma_0} $.

Then there exists  $a=a(u_0,\Gamma_0)>0$
and  a unique classical solution $(u,\pi,\Gamma)$
of (\ref{NS-2phase}) on $(0,a)$.  The set
$$\Upsilon=\bigcup_{t\in(0,a)}\{t\}\times\Gamma(t)$$
is a real analytic manifold, and with
$$\mho:=\{(t,x)\in(0,a)\times\Omega,\; x\not\in\Gamma(t)\},$$
the function $(u,\pi):\mho\rightarrow\R^{n+1}$ is real analytic. The transformed solution $(\bar{u},\bar{\pi},\bar{q},h)$ belongs to the space $\EE(a)$.
\end{theorem}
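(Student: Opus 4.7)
The approach is to reduce the free-boundary system (\ref{NS-2phase}) to the fixed-domain quasilinear problem (\ref{tfbns2}) via the Hanzawa transform of Section~\ref{sect-tfd}, rewrite it abstractly as $Lz=N(z)$ with $z=(u,\pi,q,h)\in\EE(a)$, and solve it by a contraction fixed-point argument based on the isomorphism $L:{_0\EE}(a)\to{_0\FF}(a)$ provided by \thmref{linearproblem} together with the Fr\'echet derivative estimate of \propref{pro:estimates-K}. The preparatory step is to fix $\eta>0$ small and to pick a real-analytic closed hypersurface $\Sigma\subset\Omega$ with $d_H(\cN^2\Sigma,\cN^2\Gamma_0)\leq\eta$ as described in Section~\ref{sect-tfd}, so that $\Gamma_0$ is parametrized over $\Sigma$ by a height $h_0\in W^{3-2/p}_p(\Sigma)$ whose $BUC^2(\Sigma)$-norm is of order $\eta$. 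The vector field $b\in\FF_4(a)^n$ which appears both in the linearized kinematic condition of \thmref{linearproblem} and in $N_4$ is fixed as an extension of $u_0|_\Sigma$ (for instance via the Laplace--Beltrami semigroup) so that $b(0)=u_0|_\Sigma$; this makes the term $u-b$ in the estimate of Proposition~\ref{pro:estimates-K} small at $t=0$.

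The next step is to construct a reference function $z_*=(u_*,\pi_*,q_*,h_*)\in\EE(a)$ realizing the initial data and fulfilling every compatibility condition of \thmref{linearproblem}. I would take $h_*$ as the parabolic extension of $h_0$ through $e^{-(I-\Delta_\Sigma)^{1/2}t}$, tuned so that $\partial_t h_*(0)=(u_0|\nu_\Sigma)-(b(0)|\nabla_\Sigma h_0)$, in analogy with the $h_*$-construction carried out just before Subsection~3.1. To produce $(u_*,\pi_*,q_*)$ from \thmref{linearproblem} one needs data in $\FF(a)$ whose time traces coincide with the initial values of the corresponding components of $N$; the nontrivial ones are $f_{d,0}:=N_2(u_0,h_0)|_{t=0}\in W^{1-2/p}_p(\Omega\setminus\Sigma)$ and the tangential stress defect $g_0:=\cP_\Sigma N_3(u_0,h_0)|_{t=0}\in W^{1-3/p}_p(\Sigma)$, and the hidden $\widehat{H}^{-1}_p(\Omega)$-regularity (condition (e)) must also be respected. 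This is precisely what \propref{lwp-extensions} provides, while the compatibility conditions (\ref{compatibilities}) transported through the Hanzawa transform deliver conditions (a)--(h). Setting $z=z_*+\tilde z$ with $\tilde z\in{_0\EE}(a)$, problem (\ref{lwp-tfp}) becomes the fixed-point equation
\begin{equation*}
\tilde z = L^{-1}\bigl(N(z_*+\tilde z)-Lz_*\bigr)=:T(\tilde z).
\end{equation*}

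The heart of the argument is to show that, after choosing $\eta$, $a$ and the radius $r$ sufficiently small, $T$ is a strict contraction on the closed ball $\overline{B_r(0)}\subset{_0\EE}(a)$. By the mean value theorem combined with \propref{pro:estimates-K},
\begin{equation*}
|T(\tilde z_1)-T(\tilde z_2)|_{\EE(a)}\leq \|L^{-1}\|\sup_{\tilde z\in\overline{B_r(0)}}|DN(z_*+\tilde z)|_{\Li({_0\EE}(a),{_0\FF}(a))}\,|\tilde z_1-\tilde z_2|_{\EE(a)}.
\end{equation*}
Each of the four contributions in the bound on $|DN|$ can be driven below any prescribed threshold: the $BC(J;BC)$-norms of $u-b$, $u$, $h$ and $\nabla h$ converge as $a\downarrow 0$ to their time-zero values, which are small thanks to $b(0)=u_0|_\Sigma$ and to the smallness of $h_0$ in $BUC^2(\Sigma)$ enforced by the choice of $\Sigma$; the $\EE(a)$-norm contributions are controlled by $r+|z_*|_{\EE(a)}$, and $|z_*|_{\EE(a)}$ is made small by combining a small time interval with small $\eta$. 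An analogous estimate on $|N(z_*)-Lz_*|_{\FF(a)}$ yields the self-mapping property of $T$. The Banach fixed-point theorem then produces a unique $\tilde z$, hence a unique $z\in\EE(a)$, whose image under the Hanzawa transform is the desired classical solution $(u,\pi,\Gamma)$ of (\ref{NS-2phase}). Finally, the real analyticity of $\Upsilon$ and of $(u,\pi)$ on $\mho$ is obtained exactly as in \cite{PrSi09a} by the parameter trick: introduce the two-parameter group $(\lambda,\tau)\mapsto z(\lambda t+\tau,\cdot)$ on a shorter interval, exploit (\ref{K-analytic}) and the implicit function theorem on analytic scales to get real-analytic dependence on $(\lambda,\tau)$, and then evaluate at $(\lambda,\tau)=(1,0)$. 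The main obstacle is the simultaneous balancing of the three small parameters $\eta$, $a$ and $r$, since $|z_*|_{\EE(a)}$ does not vanish with $a$ alone and genuinely requires $h_0$ to be small, which is possible only if $\Sigma$ is chosen sufficiently close to $\Gamma_0$ from the outset; once this smallness budget is apportioned correctly, the remainder of the contraction argument is parallel to the flat-interface case handled in \cite{PrSi09a,PrSi09b}.
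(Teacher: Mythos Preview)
Your proposal is correct and takes essentially the same approach as the paper's own proof: construct a reference $z_*$ via \thmref{linearproblem} using the extensions from \propref{lwp-extensions} (the paper takes $b$ as the $\Sigma$-trace of an $\EE_1(a_0)$-extension of $u_0$ and obtains all of $z^*$, including $h^*$, in one application of the linear theorem), then run the contraction argument on $\tilde z\in{_0\EE}(a)$ via \propref{pro:estimates-K}, and finish with the parameter trick for analyticity. One small correction to your closing remark: since $z_*$ is built on a fixed interval $[0,a_0]$ and all the $\EE(a)$-norms are $L_p$-based in time, $|z_*|_{\EE(a)}$ \emph{does} vanish as $a\downarrow 0$; the genuine reason $\eta$ must be small is that the $BC(J;BC)$ and $BC(J;BC^1)$ terms in the Fr\'echet estimate of \propref{pro:estimates-K} converge only to their nonzero time-zero values as $a\downarrow 0$, and those are controlled by $|h_0|_{BUC^2(\Sigma)}$.
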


\begin{proof} We consider the transformed problem.\\[\baselineskip]
{\bfseries Step 1.}
Let $h_0\in W^{3-2/p}_p(\Omega)$ and $u_0\in W^{2-2/p}_p(\Omega\backslash\Sigma)^n$ be given, such that the compatibility conditions are satisfied,
and $|h_0|_{BUC^2(\Sigma)}\leq\eta$.
Let  $J_0=[0,a_0]$ and
\begin{align*}
f^\ast_d & \in H^1_p(J_0;\dot{H}^{-1}_p(\Omega\backslash\Sigma)) \cap L_p(J_0;H^1_p(\Omega\backslash\Sigma)), \\
g^\ast & \in W^{1/2-1/2p}_p(J_0;L_p(\Sigma)) \cap L_p(J_0;W^{1-1/p}_p(\Sigma))
\end{align*}
be extensions of
\begin{equation*}
(\nabla|u_0) \in \dot{H}^{-1}_p(\Omega)\cap W^{1-2/p}_p(\Omega\backslash\Sigma)
\quad \textrm{and} \quad
\cP_\Sigma[\![\mu(\nabla u_0 + {[\nabla u_0]}^{\sf{T}})\nu_\Sigma]\!] \in W^{1-3/p}_p(\Sigma),
\end{equation*}
which exist due to Proposition~\ref{lwp-extensions}.
Further choose an extension $\tilde{u}\in \EE_1(a_0)$ of $u_0$ and set $b=\tilde{u}$ restricted to $[0,a_0]\times\Sigma$.
With these extensions we may solve the problem
\begin{equation*}
Lz^\ast = (0,f^\ast_d,g^\ast,0), \quad (u^\ast(0),h^\ast(0)) = (u_0,h_0),
\end{equation*}
since all regularity and compatibility conditions of Theorem~\ref{linearproblem} are satisfied. \\[\baselineskip]
{\bfseries Step 2.}
We rewrite problem (\ref{lwp-tfp}) as
$$Lz = N(z + z^\ast) - Lz^\ast =: K(z), \quad \quad z \in {}_0\mathbb{E}(a)$$
and observe, that the solution is given as
$z = L^{-1} K(z)$, since Theorem~\ref{linearproblem} implies that $L:{}_0\mathbb{E}(a)\to{}_0\mathbb{F}(a)$
is an isomorphism with
\begin{equation*}
{|L^{-1}|}_{\cL({}_0\mathbb{E}(a),{}_0\mathbb{F}(a))} \leq M, \quad \quad a \in (0,a_0],
\end{equation*}
where $M$ is independent of  $a\leq a_0$.
Thanks to Proposition \ref{pro:estimates-K} and due to $K(0)=N(z^\ast)-Lz^\ast$, we may choose $a \in (0,a_0]$ and $r>0$ sufficiently small such that
\begin{equation*}
{|K(0)|}_{{\mathbb{F}}(a)} \leq \frac{r}{2M},\quad
{|DK(z)|}_{\cL({_0\mathbb{E}}(a);{_0\mathbb{F}}(a))} \leq \frac{1}{2M},
\quad z \in {_0\mathbb{E}(a)},\ {|z|}_{{\mathbb{E}}(a)} \leq r,
\end{equation*}
hence $$|K(z)|_{{\FF}(a)}\leq\frac{r}{M},$$
which ensures, that $L^{-1} K(z): \overline{\mathbb{B}}_r^{{_0\EE}(a)}(0) \to \overline{\mathbb{B}}^{{_0\EE}(a)}_r(0)$
is a strict contraction; see also \cite{PrSi09b}.
Thus, we may employ the contraction mapping principle to obtain a unique solution on the time interval $[0,a]$.
\\[\baselineskip]
{\bfseries Step 3.}
By Proposition \ref{pro:estimates-K} the right-hand side $N$ is real analytic, and hence,
we obtain analyticity of $(u,\pi,q,h)$ in space and time by the parameter trick  as shown in  \cite[Theorem 6.3]{PrSi09a}; cf.\ also \cite{EPS03a,EPS03b}.
\end{proof}

\noindent
At the end of this section, we want to mention an extension of Theorem~\ref{wellposed} to the case of time-weighted $L_p$-spaces.
For this purpose, for $\mu\in(1/p,1]$ we define $\EE_\mu(a)$ by means of
$$ z\in \EE_\mu(a) \quad \Leftrightarrow \quad t^{1-\mu} z\in \EE(a),$$
and similarly we define  $\FF_\mu(a)$. Thus $\EE_1(a)=\EE(a)$, $\FF_1(a)=\FF(a)$. Such time-weights are useful to relax the regularity of the initial values, but maintaining the regularity of the solution for $t\geq\delta>0$ for arbitrary small positive $\delta$. More precisely, we have the following result.
\begin{corollary} \label{time-weight}
Fix $p>n+2$, $\mu\in(\frac{1}{2}+\frac{n+2}{2p},1)$, let $\partial\Omega\in C^3$,
and suppose
$$\Gamma_0\in
W^{2+\mu-2/p}_p,\quad u_0\in W^{2\mu-2/p}_p(\Omega\backslash\Gamma_0)^n$$
are subject to the compatibility conditions \eqref{compatibilities}.

Then there exists  $a=a(u_0,\Gamma_0)>0$
and  a unique solution $(\bar{u},\bar{\pi},\bar{q},h)\in\EE_\mu(a)$ of the transformed problem \eqref{tfbns2}, which depends continuously on the initial data.
\end{corollary}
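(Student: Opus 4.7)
The plan is to carry over the proof of Theorem~\ref{wellposed} to the weighted setting, relying on three ingredients: maximal regularity in weighted $L_p$-spaces, correct identification of the trace spaces at $t=0$, and extension of the nonlinear estimates of Proposition~\ref{pro:estimates-K} to $\EE_\mu(a)$. First, I would observe that the operator $L:{}_0\EE(a)\to{}_0\FF(a)$ constructed in Section~\ref{sect-lp} has maximal $L_p$-regularity; by the standard abstract result on time-weighted maximal regularity (multiplication by the weight $t^{1-\mu}$ is an isomorphism of ${}_0\EE_\mu\to{}_0\EE$ and ${}_0\FF_\mu\to{}_0\FF$ that commutes with the time-invariant realization of $L$ up to lower-order terms controllable by a small-time argument), $L$ is also an isomorphism from ${}_0\EE_\mu(a)$ onto ${}_0\FF_\mu(a)$ for $\mu\in(1/p,1]$, with bound uniform in $a\in(0,a_0]$.

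Second, I would identify the time traces: for $u\in\EE_1^\mu(a)$ (the weighted analogue of $\EE_1(a)$) one has $u(0)\in W^{2\mu-2/p}_p(\Omega\backslash\Sigma)^n$, and for $h\in\EE_4^\mu(a)$ one has $h(0)\in W^{2+\mu-2/p}_p(\Sigma)$. These are exactly the regularity classes stated for $u_0$ and $\Gamma_0$. The condition $\mu>\tfrac{1}{2}+\tfrac{n+2}{2p}$ guarantees $2\mu-2/p>1+n/p$, hence the embedding
\begin{equation*}
W^{2\mu-2/p}_p(\Omega\backslash\Sigma) \hookrightarrow BUC^1(\Omega\backslash\Sigma),
\end{equation*}
which makes the compatibility conditions \eqref{compatibilities} meaningful and also guarantees that the relevant trace spaces remain Banach algebras. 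The analogue of Proposition~\ref{lwp-extensions} in the weighted setting is then established by the same scheme (solve an elliptic transmission problem for $\phi_0$, then propagate by the analytic semigroups $e^{t\Delta}$, $e^{t\Delta_\Sigma}$, $e^{t\Delta_{\partial\Omega}}$), using the fact that these semigroups map initial data into weighted maximal regularity classes with the correct trace.

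Third, I would revisit Proposition~\ref{pro:estimates-K}. Its proof rests on the polynomial/analytic structure of the $M_j(h)$ and on the embeddings $\EE_1(a)\hookrightarrow BC(J;BUC^1)$, $\EE_4(a)\hookrightarrow BC(J;BUC^2)$, plus the Banach algebra properties of the trace spaces. For $\mu>\tfrac{1}{2}+\tfrac{n+2}{2p}$ the corresponding weighted embeddings still hold (the scaling between temporal and spatial regularity is preserved by the weight, and the gain $t^{1-\mu}$ at $t=0$ only helps), so the same estimates yield $N\in C^\omega(\EE_\mu(a),\FF_\mu(a))$ together with a local Lipschitz bound for $DN$ that is small on small balls for small $a$. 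Combining the weighted isomorphism $L^{-1}$ with the extension $z^\ast$ of the initial data and the nonlinearity estimates, the contraction mapping principle applied to $z\mapsto L^{-1}(N(z+z^\ast)-Lz^\ast)$ on a ball in ${}_0\EE_\mu(a)$ produces a unique fixed point and hence the desired solution; continuous dependence on initial data follows from the uniform bounds.

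The main obstacle I anticipate is a careful bookkeeping of the nonlinear estimates in the weighted norms: one must verify that each of the embeddings and algebra properties used in \cite{PrSi09b} survive the replacement of $\EE(a)$ by $\EE_\mu(a)$ under the sharp condition $\mu>\tfrac{1}{2}+\tfrac{n+2}{2p}$, and that the interpolation-type small factors $a^\gamma$ that made the contraction work in the unweighted case reappear with a (possibly different) positive exponent in the weighted setting. All other steps—linear isomorphism, extension of initial data, fixed-point iteration—are structurally identical to Theorem~\ref{wellposed}.
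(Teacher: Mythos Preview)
Your proposal is correct and follows essentially the same route as the paper: reproduce the proof of Theorem~\ref{wellposed} in the weighted setting, using that the condition $\mu>\tfrac{1}{2}+\tfrac{n+2}{2p}$ preserves the embeddings $W^{2\mu-2/p}_p(\Omega\backslash\Sigma)\hookrightarrow BUC^1(\Omega\backslash\Sigma)$ and $W^{2+\mu-2/p}_p(\Sigma)\hookrightarrow BUC^2(\Sigma)$ needed for Proposition~\ref{pro:estimates-K}, and that the linear isomorphism carries over to ${}_0\EE_\mu(a)\to{}_0\FF_\mu(a)$. The only minor difference is in how the linear part is justified: the paper invokes that $d/dt$ admits an $H^\infty$-calculus with angle $\pi/2$ in $L_{p,\mu}(J;X)$ for $\mu>1/p$ and $X$ a UMD space (citing \cite{PrSi04,PrWi09}), which is cleaner than your sketch via conjugation by $t^{1-\mu}$, since the commutator $[\partial_t,t^{1-\mu}]=(1-\mu)t^{-\mu}$ is not obviously harmless near $t=0$ without further argument.
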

\noindent
This result is proved in the same way as Theorem \ref{wellposed}, taking into account that the restriction $\mu>\frac{1}{2}+\frac{n+2}{2p}$
still ensures the embeddings
$$ W^{2\mu-2/p}_p(\Omega\backslash\Sigma)\hookrightarrow BUC^{1}(\Omega\backslash\Sigma),\quad
W^{2+\mu-2/p}_p(\Sigma)\hookrightarrow BUC^{2}(\Sigma),$$ which are crucial for Proposition \ref{pro:estimates-K} to be valid also in
the corresponding time-weighted spaces. Further, the result about the linear problem remains valid in these time-weighted spaces as well.
This can be shown in the same way as in the case $\mu=1$, taking into account that the operator $d/dt$ admits an $H^\infty$-calculus in $L_{p,\mu}(J;X))$ with angle $\pi/2$, provided $\mu>1/p$ and $X$ is UMD-Banach space; cf.\ \cite{PrSi04}.
We refrain from giving more details, here; see also \cite{PrWi09}.

\section{Semiflow, Energy Functional and Equilibria}\label{sect-sfefeq}

\subsection{ The Induced Semiflow }
Recall that the closed $C^2$-hyper-surfaces contained in $\Omega$
form a $C^2$-manifold, which we denote by $\cMH^2(\Omega)$.  The metric on $\cMH^2(\Omega)$ is defined by
$$d(\Sigma_1,\Sigma_2):= d_H(\cN^2\Sigma_1,\cN^2\Sigma_2),\quad \Sigma_1,\Sigma_2\in\cMH^2(\Omega).$$
The charts are the parameterizations over a given real analytic hyper-surface $\Sigma$,
as described in Section 2,
and the tangent space at $\Sigma$ consist of the normal vector fields on $\Sigma$ of class $C^2$.
This way $\cMH^2(\Omega)$ becomes a Banach manifold.

Let $d_\Sigma(x)$ denote the signed distance for $\Sigma$ as introduced in Section 2. We may then define a {\em level function} $\varphi_\Sigma$ by means of
$$\varphi_\Sigma(x) = g(d_\Sigma(x)),\quad x\in\R^n,$$
where
$$g(s)=s(1-\chi(s/a))+ \chi(s/a){\rm sgn}\, s,\quad s\in \R,$$
and $\chi$ denotes the cut-off function defined in Section 2.
Then it is easy to see that $\Sigma=\varphi_\Sigma^{-1}(0)$, and $\nabla \varphi_\Sigma(x)=\nu_\Sigma(x)$, for each $x\in \Sigma$. Moreover, $\mu=0$ is an eigenvalue of $\nabla^2\varphi_\Sigma(x)$ with eigenfunction $\nu_\Sigma(x)$, the remaining eigenvalues of $\nabla^2\varphi_\Sigma(x)$ are the principal curvatures $\kappa_j(x)$ of $\Sigma$ at $x\in\Sigma$.

Consider the subset $\cMH^2(\Omega,r)$ of $\cMH^2(\Omega)$ which consists of all $\Gamma\in \cMH^2(\Omega)$ such that $\Gamma\subset \Omega$ satisfies the ball condition with fixed radius $r>0$. This implies in particular that ${\rm dist}(\Gamma,\partial\Omega)\geq r$  and all principal curvatures of $\Gamma\in\cMH^2(\Omega,r)$ are bounded by $r$. Further, the level functions $\varphi_\Gamma = g\circ d_{\Gamma}$ are well defined for $\Gamma\in\cMH^2(\Omega,r)$, and form a bounded subset of $C^2(\bar{\Omega})$. The map $\Phi:\cMH^2(\Omega,r)\to C^2(\bar{\Omega})$ defined by $\Phi(\Gamma)=\varphi_\Gamma$ is an isomorphism of the metric space $\cMH^2(\Omega,r)$ onto $\Phi(\cMH^2(\Omega,r))\subset C^2(\bar{\Omega})$.

Let $s-(n-1)/p>2$; for $\Gamma\in\cMH^2(\Omega,r)$, we define $\Gamma\in W^s_p(\Omega,r)$ if $\varphi_\Gamma\in W^s_p(\Omega)$. In this case the local charts for $\Gamma$ can be chosen of class $W^s_p$ as well. A subset $A\subset W^s_p(\Omega,r)$ is said to be (relatively) compact, if $\Phi(A)\subset W^s_p(\Omega)$ is (relatively) compact. Finally, we define
$${\rm dist}_{W^s_p}(\Gamma,\Sigma):= |\varphi_\Gamma-\varphi_\Sigma|_{W^s_p(\Omega)}$$
for $\Gamma,\Sigma\in \cMH^2(\Omega,r)$.

As an ambient space for the phase-manifold $\cPM$ of the two-phase Navier-Stokes
problem with surface tension we consider the product space $C(\bar{\Omega})^n\times \cMH^2(\Omega)$.

We define $\cPM$ as follows.
\begin{equation}\label{phasemanif}
\begin{array}{rcl}
\cPM & := &
\{(u,\Gamma)\in C(\bar{\Omega})^n\times \cMH^2(\Omega):
u\in W^{2-2/p}_p(G\backslash\Gamma)^n,
\, \Gamma\in W^{3-2/p}_p,\\[0.25em]
& & {\rm div}\, u =0 \mbox{ in }\Omega\backslash\Gamma,\; u=0
\mbox{ on }
\partial\Omega,\,
 \cP_\Gamma[\![\mu(\nabla u+[\nabla u]^{\sf T})]\!]\nu_\Gamma=0 \mbox{ on } \Gamma\}.
\end{array}
\end{equation}
The charts for this manifold are obtained by the charts induced by $\cMH^2(\Omega)$,
followed by a Hanzawa transformation; see Section 2.

Observe that the compatibility conditions
\begin{align*}
{\rm div}\,  u =0 \; \mbox{ in } \Omega\backslash\Gamma, \quad
u=0 \mbox{ on } \partial\Omega,\\
\cP_\Gamma[\![\mu (\nabla u+[\nabla u]^{\sf T})]\!]\nu_\Gamma=0, \; [\![u]\!]=0 \quad \mbox{ on }
\Gamma,
\end{align*}
as well as regularity are preserved by the solutions.

Applying Theorem \ref{wellposed} and re-parameterizing repeatedly, we obtain
 a {\em local semiflow} on $\cPM$.

\begin{theorem}\label{semiflow} Let $p>n+2$. Then the two-phase Navier-Stokes
problem with surface tension generates a local semiflow
on the phase-manifold $\cPM$. Each solution $(u,\Gamma)$
exists on a maximal time interval $[0,t_*)$.
\end{theorem}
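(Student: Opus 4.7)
The plan is to derive the semiflow property as a fairly direct consequence of Theorem~\ref{wellposed} combined with a re-initialization argument that accounts for the fact that the reference hypersurface $\Sigma$ used in the Hanzawa transform must be changed as the free boundary $\Gamma(t)$ evolves.

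First, given any initial state $(u_0,\Gamma_0)\in\cPM$, Theorem~\ref{wellposed} yields a unique classical solution $(u,\pi,\Gamma)$ on a maximal-regularity interval $[0,a]$ with $a=a(u_0,\Gamma_0)>0$, whose transform $(\bar u,\bar\pi,\bar q,h)$ lives in $\EE(a)$. I would then check that for every $t\in(0,a]$ the pair $(u(t),\Gamma(t))$ again belongs to $\cPM$: the interior regularity $u(t)\in W^{2-2/p}_p(\Omega\setminus\Gamma(t))^n$ and $\Gamma(t)\in W^{3-2/p}_p$ come from the trace spaces associated with $\EE_1(a)$ and $\EE_4(a)$, while the four conditions in the definition \eqref{phasemanif} (divergence-free, no-slip on $\partial\Omega$, continuity of $u$ across $\Gamma$, and vanishing tangential jump of the viscous stress) are preserved in time because they are implied by the equations of \eqref{NS-2phase} themselves and hold at $t=0$ by assumption.

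Next, to iterate and to obtain a genuine local semiflow I would restart the problem from $(u(t_0),\Gamma(t_0))$ for some $0<t_0<a$. Because $\Gamma(t_0)$ is generally not the original reference surface $\Sigma$, one selects a new real-analytic hypersurface $\Sigma'\in\cMH^2(\Omega,r)$ with $d_H(\cN^2\Sigma',\cN^2\Gamma(t_0))<\eta$ (as in Section~\ref{sect-tfd}), writes $\Gamma(t_0)$ as a height function $h_0'\in W^{3-2/p}_p(\Sigma')$ with $|h_0'|_{BUC^2(\Sigma')}\leq\eta$, and applies Theorem~\ref{wellposed} once more on a new reference configuration. Uniqueness in Theorem~\ref{wellposed} then guarantees that the new transformed solution, pulled back to $\Omega\setminus\Gamma(t)$, agrees on the overlap with the original one; hence the evolution is independent of the choice of reference surface, and the resulting map $\Phi_t\colon(u_0,\Gamma_0)\mapsto(u(t),\Gamma(t))$ satisfies the semigroup identity $\Phi_{t+s}=\Phi_t\circ\Phi_s$. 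Continuity of $\Phi_t$ on $\cPM$ with respect to the topology described in Section~\ref{sect-sfefeq} (induced by the level-function embedding $\Phi(\Gamma)=\varphi_\Gamma$) follows from the continuous dependence of the transformed solution on the data stated at the end of Theorem~\ref{linearproblem} together with the analyticity of the nonlinearity in Proposition~\ref{pro:estimates-K}.

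Finally, the existence of a maximal interval $[0,t_*)$ is a standard Zorn's lemma argument: the set of solutions extending the given one is inductively ordered by restriction, uniqueness forces two such extensions to coincide on their common domain, and taking the union yields a maximal solution on an interval that is either $[0,\infty)$ or has a finite supremum $t_*$. The main technical point, and the only one that requires some care, is the chart change at re-initialization: one has to verify that the smallness condition $|h_0'|_{BUC^2(\Sigma')}\leq\eta$ needed by the contraction mapping step in the proof of Theorem~\ref{wellposed} can be arranged by choosing $\Sigma'$ sufficiently close to $\Gamma(t_0)$ in the second-order bundle, and that the length of the new existence interval can be bounded below uniformly on compact subsets of $\cPM$, which in turn guarantees that non-extendibility beyond a finite $t_*$ signals genuine loss of regularity (a singularity of the phase manifold).
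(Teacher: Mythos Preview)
Your proposal is correct and follows exactly the approach indicated in the paper, which gives only the one-sentence justification ``Applying Theorem~\ref{wellposed} and re-parameterizing repeatedly, we obtain a local semiflow on $\cPM$.'' You have simply spelled out what that sentence means: invoke local well-posedness, verify that the compatibility conditions defining $\cPM$ are preserved along the flow, re-choose the analytic reference surface at later times to keep the height function small, glue via uniqueness, and pass to a maximal interval by the standard continuation argument.
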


\subsection{The Pressure}
The pressure does not occur explicitly as a variable in the local semiflow, the latter is
only formulated in terms of the velocity field $u$ and the free boundary $\Gamma$.
Actually, at every instant $t$ the pressure $\pi$ can be reconstructed from the semiflow.
In fact, fix any $t\in(0,t_*)$ and consider $\phi\in H^{1}_{p^\prime}(\Omega)$.
Then we have by the divergence theorem
$$(u(t)|\nabla\phi)_{L_2(\Omega)}= -({\rm div}\, u|\phi)_{L_2(\Omega)}
-([\![(u|\nu )]\!]|\phi)_{L_2(\Gamma)}=0,$$
hence also $(\partial_tu|\nabla\phi)=0$. This implies, multiplying the momentum balance
divided by $\rho$ with $\nabla\phi$ in $L_2(\Omega)$
$$ \left.\left(\nabla\frac{\pi}{\rho}\,\right|\nabla\phi\right)_{L_2(\Omega)}=
\left.\left(\frac{\mu}{\rho}\Delta u-u\cdot\nabla u\,\right|\nabla\phi\right)_{L_2(\Omega)}.$$
On the other hand, multiplying the stress boundary condition by $\nu_\Gamma$ yields
$$[\![\pi]\!]= \sigma H_\Gamma +([\![\mu(\nabla u+[\nabla u]^{\sf T})]\!]\nu_\Gamma|\nu_\Gamma)$$
on $\Gamma$. Thus $\pi$ must satisfy the following problem.
\begin{align}\label{pressure-reconstr}
\left.\left(\nabla\frac{\pi}{\rho}\,\right|\nabla\phi\right)_{L_2(\Omega)}=
\left.\left(\frac{\mu}{\rho}\Delta u-u\cdot\nabla u\,\right|\nabla\phi\right)_{L_2(\Omega)},&
\quad \phi\in H^{1}_{p^\prime}(\Omega)\\
[\![\pi]\!]= \sigma H_\Gamma +([\![\mu(\nabla u+[\nabla u]^{\sf T})]\!]\nu_\Gamma|\nu_\Gamma),&
\quad x\in\Gamma.\nonumber
\end{align}
Theorem \ref{thmtrans} implies that this problem has a unique solution
$\pi\in \dot{H}^1_p( \Omega\backslash\Gamma)$. Thus the pressure is uniquely
defined (up to a constant) by the semiflow and can be obtained by solving the transmission problem
 \eqref{pressure-reconstr}.

\subsection{The Energy Functional}
Define the {\em energy functional} by means of
$$\Phi(u,\Gamma):=\frac{1}{2} |\rho^{1/2}u|_{L_2(\Omega)}^2
+\sigma|\Gamma(t)|. $$
Then
\begin{equation}\label{LF}
\partial_t \Phi(u,\Gamma) + 2 |\mu^{1/2}E|^2_{L_2(\Omega)}=0,\nonumber
\end{equation}
hence the energy functional is a Ljapunov functional, in fact, even a strict one.
We have the following result.

\begin{proposition}\label{energy}
Let $\rho_i,\mu_i,\sigma>0$ be constants. Then

\noindent
(a) The {\em energy equality} is valid for smooth solutions.\\
(b) The {\em equilibria} are zero velocities, constant pressures in the components
of the phases, the dispersed phase is a union of  nonintersecting open balls. \\
(c) The {\em energy functional} is a strict Ljapunov-functional.\\
(d) The {\em critical points} of the energy functional for constant
phase volumes are precisely the equilibria.
\end{proposition}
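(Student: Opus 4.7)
The four parts can be handled together by deriving one master energy identity and then invoking Alexandrov's characterization of closed embedded constant-mean-curvature hypersurfaces in $\R^n$.

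For (a) I would multiply the momentum balance by $u$ and integrate over each phase $\Omega_i(t)$. Reynolds' transport theorem applied to the kinetic energy $\tfrac{1}{2}\rho|u|^2$ produces $\frac{d}{dt}\int_\Omega \tfrac{\rho}{2}|u|^2\,dx$ plus an interface contribution $\int_{\Gamma(t)}[\![\tfrac{\rho}{2}|u|^2]\!] V_\Gamma\,dS$; the convective term $\int_\Omega \rho u\cdot\nabla(\tfrac{1}{2}|u|^2)\,dx$, integrated by parts in each phase using ${\rm div}\,u=0$, $u|_{\partial\Omega}=0$, $[\![u]\!]=0$ and $u\cdot\nu_\Gamma=V_\Gamma$, produces the exact opposite, so these boundary terms cancel. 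Testing the viscous/pressure part against $u$ and integrating by parts on each phase gives the dissipation $2|\mu^{1/2}E|^2_{L_2(\Omega)}$ (using symmetry of $E$, incompressibility, and the Dirichlet data on $\partial\Omega$) plus $-\int_\Gamma [\![S\nu_\Gamma]\!]\cdot u\,dS$; substituting the stress jump $-[\![S\nu_\Gamma]\!] = \sigma H_\Gamma \nu_\Gamma$ and again $u\cdot\nu_\Gamma = V_\Gamma$ converts this contribution into $\sigma\int_\Gamma H_\Gamma V_\Gamma\,dS$, which by the first variation of area equals $-\sigma\frac{d}{dt}|\Gamma(t)|$ in the sign convention for $H_\Gamma$ fixed in Section~\ref{sect-intro}. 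Rearranging yields the energy identity.

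Parts (c) and (b) follow almost immediately. For (c), the identity shows $\Phi$ is non-increasing and its time derivative vanishes only if $E(u)\equiv 0$; the standard characterization of $E(u)=0$ as an infinitesimal rigid motion, combined with $u|_{\partial\Omega}=0$ and $[\![u]\!]=0$, then forces $u\equiv 0$ in $\Omega\backslash\Gamma(t)$ (first on the connected outer phase $\Omega_2$, then on each component of $\Omega_1$ via continuity across $\Gamma$), hence $V_\Gamma=0$ by the kinematic condition and the state is an equilibrium. For (b), an equilibrium by the same argument has $u\equiv 0$, whereupon the momentum balance reduces to $\nabla\pi=0$ on each $\Omega_i$ so $\pi$ is constant on every connected component of each phase; the stress jump becomes $[\![\pi]\!]=\sigma H_\Gamma$ on $\Gamma$, and since $\Omega_2$ is connected the left-hand side is piecewise constant on $\Gamma$. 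Thus $H_\Gamma$ is constant on each component of $\Gamma$, and Alexandrov's theorem identifies each such component as a round sphere bounding a ball contained in $\Omega_1$; these balls are mutually disjoint since they are distinct components of a manifold.

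For (d) I would work on the submanifold of $\cPM$ cut out by the component-wise volume constraints $|\Omega_1^{(j)}|=c_j$, which are conserved by the flow (via ${\rm div}\,u=0$ and $V_\Gamma=u\cdot\nu_\Gamma$). Varying only $u$ among admissible fields gives $\delta_u\Phi=(\rho u|\delta u)_{L_2(\Omega)}$, and testing with $\delta u=u$ forces $u=0$ at any critical point. Varying $\Gamma$ through a normal velocity field $V$ produces $\delta|\Gamma|=-\int_\Gamma H_\Gamma V\,dS$ and $\delta|\Omega_1^{(j)}|=\int_{\Gamma^{(j)}}V\,dS$, so the Lagrange multiplier condition $\sigma\delta|\Gamma|=\sum_j\lambda_j\delta|\Omega_1^{(j)}|$ forces $H_\Gamma$ to be constant on each $\Gamma^{(j)}$; Alexandrov again yields spheres, and identifying the multipliers $\lambda_j$ with the pressure jumps reproduces the equilibria found in (b). The principal conceptual ingredient, beyond careful bookkeeping of the two outward-normal conventions on $\Gamma$ and the paper's sign convention $H_\Gamma=-{\rm div}_\Gamma\nu_\Gamma$ in the energy computation, is Alexandrov's rigidity theorem, which is what promotes ``constant mean curvature on each component'' to ``each component is a sphere''.
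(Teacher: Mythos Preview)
Your argument is correct and is the standard derivation: the transport/integration-by-parts computation for (a), Korn-type rigidity plus the boundary conditions for (c), the reduction to constant mean curvature and Alexandrov's theorem for (b), and the Lagrange-multiplier variation for (d) are exactly the ingredients one needs, and you have handled the sign conventions consistently with the paper's choice $H_\Gamma=-{\rm div}_\Gamma\nu_\Gamma$.

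The paper itself does not give a proof of this proposition; it simply records it as a special case of \cite[Theorem~3.1]{BP07} (Bothe--Pr\"uss). Your sketch is essentially what that reference contains (specialized here to the case of constant surface tension and no surface rheology), so there is no genuine difference in approach to compare---you have supplied the argument that the present paper outsources.
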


\noindent
This result is a special case of \cite[Theorem 3.1]{BP07}.

\begin{remark}
(i) Let us point out that in equilibrium the dispersed phase consists of at most
countably many disjoint balls $B_{R_i}(x_i)$. If there are infinitely many of them,
then $R_i\to0$ as $i\to\infty$, hence the corresponding curvatures $H_i= -(n-1)/R_i$
tend to infinity, as well as the pressures inside these balls. This is due to the model
assumption that there is no phase transition. On the other hand, phase transition will
occur at very high pressure levels. To avoid this contradiction,
in the sequel we consider only equilibria in which the dispersed phase consists
of only finitely many balls. Note also that the free boundary will not
be of class $C^2$ if $\Omega_1$ has infinitely many components.\\
(ii) There is another pathological case which we exclude in the sequel, namely if
the dispersed phase contains balls touching each other. This can only happen if the radii
of these balls are equal, otherwise the pressure jump would not be constant on $\Gamma$.
Physically one would expect such an equilibrium  to be unstable,
but at present we are not able to handle this case. Observe that also in such a situation
the free boundary $\Gamma$ is not a manifold of class $C^2$.\\
(iii) Of course neither (i) or (ii) occurs if we assume that $\Omega_1$ is connected,
the continuous phase enjoys this property anyway.
\end{remark}


\section{The Stability Result} \label{sect-sr}

Assuming, for simplicity, that the phases are connected, we denote by
$$\cE:=\{(0,S_R(x_0)):\, x_0\in\Omega, \, R>0,\, \bar{B}_R(x_0)\subset\Omega\}$$
the set of equilibria without boundary contact. Note that $\cE$ forms a
real analytic manifold of dimension $n+1$. Here $n$ dimensions come from the coordinates
of the center $x_0$ and one from the radius $R$ of the sphere $S_R(x_0)$.

Fix any such equilibrium $(0,\Sigma)\in\cE$. We consider the behaviour of the
solutions near this steady  state.
Suppose $p>n+2$, let $\partial\Omega\in C^3$,
and consider initial data $(u_0,\Gamma_0)\in\cPM$.

Here we have to use the full linearization of the problem at
an equilibrium $(0,\Sigma)$ i.e.\ at $(u,h)=(0,0)$, and for this reason
we have to replace $\Delta_\Sigma$ in the linear problem \eqref{linFB} by
$$ \cA_\Sigma = H^\prime_\Gamma(0) = \frac{n-1}{R^2} +\Delta_\Sigma.$$
This results in the problem
\begin{equation}\label{linStokes}
\begin{array}{rcll}
\rho\partial_tv -\mu\Delta v+\nabla q & = & \rho f_v & \mbox{ in }\Omega\backslash\Sigma,\\[0.25em]
\mbox{div}\,v & = & f_d & \mbox{ in }\Omega\backslash\Sigma,\\[0.25em]
[\![-\mu([\nabla v]+[\nabla v]^{\sf T})+q]\!]\nu_\Sigma - \sigma(\cA_\Sigma h) \nu_\Sigma & = & g & \mbox{ on } \Sigma,\\[0.25em]
[\![v]\!] & = & 0 & \mbox{ on } \Sigma,\\[0.25em]
v & = & 0 & \mbox{ on } \partial\Omega,\\[0.25em]
\partial_th - (v|\nu_\Sigma) & = & g_h & \mbox{ on } \Sigma,\\[0.25em]
v(0) = v_0 \; \mbox { in } \Omega\backslash\Sigma, \quad h(0) & = & h_0 & \mbox{ on } \Sigma.
\end{array}
\end{equation}
It is well-known that $\cA_\Sigma$ is selfadjoint, negative semidefinite
on functions with zero mean, and has compact resolvent in $L_2(\Sigma)$.
$\lambda_0=0$ is an eigenvalue with eigenspace of dimension $n$, spanned by the
spherical harmonics of degree one.
$\lambda_{-1}=(n-1)/R^2$ is also an eigenvalue, its eigenspace is one-dimensional
and consists of the constants.

As a base space for our analysis we use
$$X_0=L_{p,\sigma}(\Omega)^n\times W^{2-1/p}_p(\Sigma),$$
where the subscript $\sigma$ means solenoidal, and we set
$$\bar{X}_1=H^2_{p}(\Omega\backslash\Sigma)^n
\times W^{3-1/p}_p(\Sigma).$$
Define a closed linear operator in $X_0$ by means of
$$A(v,h)=
( -(\mu/\rho) \Delta v +\rho^{-1}\nabla q,-(v|\nu_\Sigma)),$$
with domain $X_1:=D(A)\subset \bar{X}_1$ defined by
\begin{eqnarray*}
D(A) \hspace{-0.3cm}&&=\{(v,h)\in \bar{X}_1\cap X_0:
v=0
\mbox{ on }\partial\Omega,\;
[\![v]\!]=0  \mbox{ and } \\
&&\qquad[\![\cP_\Sigma\mu(\nabla v+\nabla v^{\sf T})\nu_\Sigma]\!]
=0 \mbox{ on } \Sigma\},
\end{eqnarray*}
where as before $\cP_\Sigma$ means the projection onto the tangent space of
$\Sigma$. \\
Here $q\in \dot{H}^1_p(\Omega\backslash \Sigma)$
 is determined as the solution of the transmission problem
\begin{eqnarray*}
&&\left(\nabla \frac{q}{\rho}|\nabla\phi\right)_{L_2} = \left(\frac{\mu}{\rho}\Delta v|\nabla \phi\right)_{L_2},
\quad \phi\in W^1_{p^\prime}(\Omega),\\[0.5em]
&& \mbox{}[\![ q]\!] = [\![\mu((\nabla v + [\nabla v]^{\sf T})\nu_\Sigma|\nu_\Sigma)]\!]
+\sigma\cA_\Sigma h
\quad \mbox{ on } \Sigma,
\end{eqnarray*}
which is well-defined (up to a constant) by Theorem \ref{thmtrans}.
This implies
$$ \frac{1}{\rho}\nabla q= T_1\left(\frac{\mu}{\rho}\Delta v\right) + T_2\big(([\![\mu(\nabla v +[\nabla v]^{\sf T})]\!]\nu_\Sigma|\nu_\Sigma)+\sigma\cA_\Sigma h\big).$$
Then with $z=(v,h)$ and $f=(f_v,g_h)$ as well as
$z_0=(v_0,h_0)$, system (\ref{linStokes}) can be
rewritten as the abstract evolution equation
\begin{equation} \label{ACP}\dot{z}+Az=f,\quad t>0,\quad z(0)=z_0,\end{equation}
in $X_0$, provided $f_d\equiv g\equiv 0$; see also \cite{ShSh09}.

Since by Theorem \ref{linearproblem} problem (\ref{linStokes}) has maximal $L_p$-regularity,
the abstract problem (\ref{ACP}) has maximal $L_p$-regularity, as well.
In particular, $-A$ generates an analytic $C_0$-semigroup in $X_0$; see e.g.\ \cite{Pru03}, Proposition 1.2.
In addition we have the following result.

\begin{proposition}
Let $\rho_i,\mu_i >0, \sigma>0$ be constants, $p\in(1,\infty)$,
and let $X_0$, $A$, $X_1:=D(A)$ be defined as above.
Then the following holds.\\
(a) The linear operator $-A$ generates a compact analytic
$C_0$-semigroup in $X_0$ which has the property of maximal
$L_p$-regularity.\\
(b) The spectrum of $A$ consists of countably many eigenvalues
with finite algebraic multiplicity and is independent of $p$.\\
(c) $A$ has no eigenvalues $\lambda$
with nonnegative real part other than $\lambda=0$.\\
(d) $\lambda=0$ is a semisimple eigenvalue with multiplicity $n+1$.\\
(e) The eigenspace $N(A)$ is isomorphic to the tangent space
$T_{z^*}\cE$ of $\cE$ at the given equilibrium
 $z^*=(0, \Sigma)$,
where $\Sigma = S_R(x_0)$.\\
(f) The restriction of $e^{-At}$ to $R(A)$ is exponentially stable.
\end{proposition}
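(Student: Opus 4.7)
\textbf{Parts (a), (b).} Read the linear problem (\ref{linStokes}) with $f_d=g=u_b=u_\Sigma=0$ as the abstract Cauchy problem $\dot z+Az=(f_v,g_h)$ in $X_0$. Theorem~\ref{linearproblem} provides maximal $L_p$-regularity on every finite interval; the standard characterization then forces $-A$ to generate an analytic $C_0$-semigroup and $A$ to enjoy maximal $L_p$-regularity. Compactness of the resolvent comes from the compact embedding $X_1\hookrightarrow X_0$ on the bounded domain $\Omega$ and the closed manifold $\Sigma$. For (b), compact resolvent forces a discrete spectrum of isolated eigenvalues of finite algebraic multiplicity; $p$-independence is obtained by an elliptic bootstrap, since every eigenfunction solves a homogeneous two-phase Stokes problem with smooth coefficients and real-analytic interface and hence lies in $X_1$ for any $p\in(1,\infty)$.

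\textbf{Parts (c), (d), (e).} Given $Az=\lambda z$ with $z=(v,h)\neq 0$, two basic reductions are available. First, integrating $\mathrm{div}\,v=0$ over $\Omega_1$ and using $(v|\nu_\Sigma)=-\lambda h$ forces $\lambda\int_\Sigma h=0$, so $h$ has zero mean whenever $\lambda\neq 0$; on such $h$ the self-adjoint operator $\cA_\Sigma=\tfrac{n-1}{R^2}+\Delta_\Sigma$ has non-positive spectrum, with kernel equal to the space $Y_1$ of degree-one spherical harmonics (translations of $\Sigma$). Second, testing the momentum equation $-\mu\Delta v+\nabla q=\rho\lambda v$ against $\bar v$ and using the linearized jump condition $[\![-2\mu Ev+qI]\!]\nu_\Sigma=\sigma\cA_\Sigma h\,\nu_\Sigma$ together with $(v|\nu_\Sigma)=-\lambda h$ produces the key identity
\begin{equation*}
\rho\lambda\|v\|_{L_2(\Omega)}^2-\sigma\bar\lambda(\cA_\Sigma h,h)_{L_2(\Sigma)}=2\|\mu^{1/2}Ev\|_{L_2(\Omega)}^2.
\end{equation*}
Separating real and imaginary parts of this equality, together with Korn--Poincar\'e (controlled by $v|_{\partial\Omega}=0$ and $[\![v]\!]=0$) and the observation that $v=0$ forces $h=0$ when $\lambda\neq 0$, pins down the location of every non-trivial eigenvalue required by~(c). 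For $\lambda=0$ the same identity forces $Ev=0$, hence $v=0$, so $\nabla q=0$ and the jump condition collapses to $[\![q]\!]=\sigma\cA_\Sigma h$ being constant, which yields $h\in\mathrm{span}\{1\}\oplus Y_1$ and hence $\dim N(A)=n+1$; this matches $\dim T_{z^*}\cE=n+1$ (one dilation plus $n$ translations), giving~(e). For semisimplicity in (d) I would take $z\in N(A^2)\setminus N(A)$, write $\tilde z=Az=(0,\tilde h)\in N(A)$ with $\tilde h\in\mathrm{span}\{1\}\oplus Y_1$, test the equation $Az=\tilde z$ against $\bar v$, and use self-adjointness of $\cA_\Sigma$ together with the volume identity $\int_\Sigma(v|\nu_\Sigma)=-\int_\Sigma\tilde h$ to force $\tilde h=0$.

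\textbf{Part (f), and the main obstacle.} Semisimplicity of $0$ yields the invariant decomposition $X_0=N(A)\oplus R(A)$; the restriction $A_1:=A|_{R(A)}$ satisfies $\sigma(A_1)=\sigma(A)\setminus\{0\}$, which by (c) is bounded away from the imaginary axis, and since $-A_1$ still generates an analytic $C_0$-semigroup, growth bound equals spectral bound, so exponential stability of $e^{-At}|_{R(A)}$ follows. The main technical difficulty lies in~(c): extracting the correct sign of $\mathrm{Re}\,\lambda$ from the single energy identity above requires handling both oscillatory modes (where the imaginary-part relation draws on the precise kernel structure of $\cA_\Sigma$ on zero-mean functions modulo~$Y_1$) and real eigenvalues, and in particular ruling out non-zero eigenvalues sitting on the imaginary axis.
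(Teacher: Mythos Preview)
The paper does not actually prove this proposition; it simply records that the result is a special case of \cite[Theorem~4.1]{BP08}. Your plan therefore cannot be compared to a proof in the paper, but it is the natural self-contained argument and very likely coincides with the computation carried out in \cite{BP08}.

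Your sketch is essentially correct. Two small comments. First, since $\rho$ is only piecewise constant, the energy identity should read $\lambda(\rho v,v)_{L_2(\Omega)}-\sigma\bar\lambda(\cA_\Sigma h,h)_{L_2(\Sigma)}=2\|\mu^{1/2}Ev\|_{L_2(\Omega)}^2$. With $a=(\rho v,v)\ge 0$, $b=-\sigma(\cA_\Sigma h,h)\ge 0$ (for mean-zero $h$) and $c=2\|\mu^{1/2}Ev\|^2\ge 0$ this is $\lambda a+\bar\lambda b=c$, so $(\mathrm{Re}\,\lambda)(a+b)=c\ge 0$; if $\mathrm{Re}\,\lambda=0$ then $Ev=0$, hence $v=0$ by Korn (using $v|_{\partial\Omega}=0$, $[\![v]\!]=0$ and connectedness of the phases), and then $\lambda\neq 0$ forces $h=0$. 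Thus non-zero eigenvalues of $A$ have \emph{positive} real part, which is exactly what (f) requires; the wording of (c) in the paper appears to carry a sign slip. Second, the ``main obstacle'' you flag in (c) is milder than you suggest: once the identity is in the form $\lambda a+\bar\lambda b=c$ with $a,b,c\ge 0$, taking the real part handles real and complex eigenvalues simultaneously, and the purely imaginary case is disposed of by the Korn argument above. Your semisimplicity argument for (d) is fine as well: the divergence theorem gives $\int_\Sigma\tilde h=0$, so the constant part of $\tilde h$ vanishes, $\tilde h\in Y_1=\ker\cA_\Sigma$, and testing $Az=\tilde z$ against $v$ then yields $Ev=0$, $v=0$, $\tilde h=-(v|\nu_\Sigma)=0$.
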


This result is a special case of  \cite[Theorem 4.1]{BP08}. It shows that equilibria $(0,\Sigma)\in\cE$ are normally stable,
 hence allows for the use of the {\em generalized principle of
linearized stability} to obtain our main result on stability and convergence.

The following result concerns the stationary Stokes problem
\begin{equation}
\label{aux-problem}
\begin{aligned}
\rho\omega u -\mu\Delta u+\nabla \pi&=0
    &\ \hbox{in}\quad &\Omega\backslash\Sigma,\\
{\rm div}\,u&= f_d& \hbox{in}\quad &\Omega\backslash\Sigma,\\
-\cP_\Sigma[\![\mu(\nabla u+[\nabla u]^{\sf T})]\!]\nu_\Sigma &=g_\tau
	&\ \hbox{on}\quad &\Sigma,\\
-([\![\mu(\nabla u+[\nabla u]^{\sf T})]\!]\nu_\Sigma|\nu_\Sigma) + [\![\pi]\!]&= g_\nu
	&\ \hbox{on}\quad &\Sigma,\\
[\![u]\!]&=0 &\ \hbox{on}\quad &\Sigma,\\
u&=0 &\ \hbox{on} \quad&\partial\Omega.
\end{aligned}
\end{equation}
It is needed in the proof of the main result of this section.
\begin{proposition}\label{proellprb}
Let $p>3$ be fixed, and assume that $\rho_i$ and $\mu_i$ are positive
constants for $i=1,2$, and that $\omega>0$ is large enough.
Then the stationary Stokes problem with free boundary \eqref{aux-problem} admits a unique solution $(u,\pi,[\![\pi]\!])$ with regularity
\begin{equation}
\begin{aligned}
&u\in W^{2-2/p}_p(\Omega\backslash\Sigma)^n,\quad \pi\in \dot{W}^{1-2/p}_p(\Omega\backslash\Sigma),\quad
[\![\pi]\!]\in W^{1-3/p}_p(\Sigma),
\end{aligned}
\end{equation}
if and only if the data
$(f_d,g)$
satisfy the following regularity  conditions:
\begin{itemize}
\item[(a)]
$f_d\in  W^{1-2/p}_p(\Omega\backslash\Sigma)\cap \dot{H}^{-1}_p(\Omega)$,
\vspace{1mm}
\item[(b)]
$g=(g_\tau,g_\nu)\in W^{1-3/p}_p(\Sigma)^n$.
\end{itemize}
The solution map $[(f_d,g)\mapsto (u,\pi, [\![\pi]\!])]$
is continuous between the corresponding spaces.
\end{proposition}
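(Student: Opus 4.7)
The plan is to view Proposition~\ref{proellprb} as a stationary resolvent version of the parabolic two-phase Stokes problem of Section~\ref{sect-lp}, with $\sigma = 0$ and the height equation removed, and to adapt the same reduction-and-localization strategy. First I would reduce to homogeneous data in the divergence and the normal stress. Using Theorem~\ref{thmtrans2}, solve $\Delta\psi = f_d$ in $\Omega\setminus\Sigma$ with $[\![\rho\psi]\!] = [\![\partial_{\nu_\Sigma}\psi]\!] = 0$ on $\Sigma$ and $\partial_{\nu_{\partial\Omega}}\psi = 0$ on $\partial\Omega$; the hypothesis $f_d\in W^{1-2/p}_p(\Omega\setminus\Sigma)\cap \dot H^{-1}_p(\Omega)$ together with $\partial\Omega\in C^3$ yields $\nabla\psi\in W^{2-2/p}_p(\Omega\setminus\Sigma)^n$. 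Writing $u = \tilde u + \nabla\psi$ and $\pi = \tilde\pi + \mu\Delta\psi - \rho\omega\psi$ absorbs $f_d$ and produces only a correction to $g_\nu$ of admissible regularity. Then Theorem~\ref{thmtrans} allows one to subtract a harmonic pressure with prescribed jump on $\Sigma$ and vanishing normal derivative on $\partial\Omega$, reducing to $f_d = 0$ and $g_\nu = 0$.

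For the reduced problem, I would proceed by localization as in Section~\ref{sect-lp}. On a flat interface $\R^{n-1}\subset\R^n$, the system is the stationary analogue of \eqref{linFBflat} with height and surface-tension terms suppressed; partial Fourier transform in the tangential variable gives a family of ODE systems in the normal variable $y$ whose symbol, thanks to $\omega>0$, is boundedly invertible and satisfies the Lopatinskii-Shapiro conditions from \cite{DHP07,PrSi09b}. This delivers the desired regularity uniformly for $\omega$ bounded below by any positive constant. Bent interfaces follow from the flat case via the transformation $(x,y)\mapsto(x,y+\phi(x))$ and a Neumann-series perturbation argument that works when $|\nabla\phi|_{L_\infty}$ is sufficiently small, exactly as in Section~\ref{sect-lp}. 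For general geometry, cover $\Sigma$ by finitely many balls $B_{r/2}(x_j)$ on which $\Sigma$ is a graph with small gradient, supplement by an interior chart $U_{N+1}\subset\Omega_1$ and a boundary chart $U_0$ around $\partial\Omega$, and use a partition of unity $\{\varphi_j\}$. Each localized pair $(\varphi_j u,\varphi_j\pi)$ solves a bent-interface or a one-phase stationary Stokes resolvent problem with commutator remainders that are either of strictly lower order or carry coefficients involving $\nabla\varphi_j$ and can be absorbed for large $\omega$. Summing the localized estimates yields an a priori bound of the form
\begin{equation*}
|(u,\pi,[\![\pi]\!])|_{\mathrm{sol}} \leq C(|f_d|_{\mathrm{data}} + |g|_{\mathrm{data}}) + C_1\omega^{-\gamma}|(u,\pi,[\![\pi]\!])|_{\mathrm{sol}},
\end{equation*}
and choosing $\omega$ large absorbs the last term. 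Existence then follows from the semi-Fredholm/continuation argument at the end of the proof of Theorem~\ref{linearproblem}.

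Uniqueness is immediate from the energy identity: for homogeneous data, testing the momentum equation with $u$ and integrating by parts (using $[\![u]\!] = 0$, $u|_{\partial\Omega} = 0$, and the stress boundary condition) gives $\rho\omega|u|_{L_2(\Omega)}^2 + 2|\mu^{1/2}E(u)|_{L_2(\Omega)}^2 = 0$, hence $u\equiv 0$, and then $\nabla\pi = 0$ together with $[\![\pi]\!] = 0$ forces $\pi$ constant. The main obstacle is producing the decay factor $\omega^{-\gamma}$ in the commutator estimates: in Theorem~\ref{linearproblem} the analogous smallness came from the short time interval $a$, whereas here it must instead come from the resolvent gain, visible for instance in the fact that $(\rho\omega - \mu\Delta)^{-1}$ maps $L_p$ into an $\omega^{-1/2}$-bounded subset of $H^1_p$. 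A more abstract alternative would be to verify that the parabolic operator associated with the system (with $\sigma = 0$ and the height equation removed) generates an analytic semigroup on $L_{p,\sigma}(\Omega)^n$, which is a consequence of Theorem~\ref{linearproblem}, and then to read off invertibility of $\omega + A$ for $\omega$ large from analyticity; the required regularity for $(u,\pi,[\![\pi]\!])$ is then recovered from the boundary conditions defining the domain of $A$ together with Theorem~\ref{thmtrans}.
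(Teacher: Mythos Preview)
Your proposal is correct and follows essentially the same approach the paper indicates: the paper's own ``proof'' is a single sentence stating that the argument is similar to that of Theorem~\ref{linearproblem}, with the flat-interface case taken from \cite{PrSi09c}. You have correctly fleshed out this sketch, including the key observation that in the stationary problem the resolvent parameter $\omega$ furnishes the smallness needed to absorb the localization commutators, playing the role that the short time interval $a$ played in Section~\ref{sect-lp}.
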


The proof of this elliptic problem is similar to that of Theorem \ref{linearproblem}; we refer to \cite{PrSi09c} for the case of a flat interface.

The main result of this section is the following.

\begin{theorem}\label{stability} The equilibrium $(0,\Sigma)\in\cE$ is stable in the sense
that for each $\epsilon\in(0,\epsilon_0]$
there exists $\delta(\epsilon)>0$ such that for all initial values
$(u_0,\Gamma_0)$ subject to
$${\rm dist}_{W^{3-2/p}_p}(\Gamma_0,\Sigma)\leq \delta(\epsilon)\quad \mbox{ and }
\quad
\|u_0\|_{W^{2-2/p}_p(\Omega\backslash\Gamma_0)}\leq \delta(\epsilon)$$ there exists
a unique global solution $(u(t),\Gamma(t))$
of the problem, and it satisfies
$${\rm dist}_{W^{3-2/p}_p}(\Gamma(t),\Sigma)\leq \epsilon\quad \mbox{ and } \quad
\|u(t)\|_{W^{2-2/p}_p(\Omega\backslash\Gamma(t))}\leq \epsilon,\; t\geq0.$$
Moreover, as $t\to\infty$ the solutions $(u(t),\Gamma(t))$
converges to an equilibrium $(0,\Sigma_\infty)$
in the same topology, i.e.
$$\lim_{t\to\infty} {\rm dist}_{W^{3-2/p}_p}(\Gamma(t),\Sigma_\infty)+
\|u(t)\|_{W^{2-2/p}_p(\Omega\backslash\Gamma(t))}=0.$$
The convergence is at exponential rate.
\end{theorem}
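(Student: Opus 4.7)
The plan is to apply the \emph{generalized principle of linearized stability} (as developed by Pr\"uss--Simonett--Zacher for quasilinear parabolic problems with a finite-dimensional manifold of equilibria) to the transformed problem \eqref{tfbns2} parametrized over the fixed reference sphere $\Sigma = S_R(x_0)$. After the Hanzawa transform of Section~\ref{sect-tfd}, the equilibrium $(0,\Sigma)$ becomes the trivial steady state $(u,h)=(0,0)$ of the abstract evolution equation obtained by writing \eqref{tfbns2} in the form
\begin{equation*}
\dot z + A z = F(z), \qquad z=(v,h),
\end{equation*}
in the base space $X_0 = L_{p,\sigma}(\Omega)^n \times W^{2-1/p}_p(\Sigma)$, where $A$ is the linearization from the previous proposition and $F$ collects the nonlinear terms together with the pressure correction. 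By Proposition~\ref{pro:estimates-K} (and its analogue in time-weighted spaces, cf.\ Corollary~\ref{time-weight}) the map $F$ is real-analytic near the origin with $F(0)=0$ and $F^\prime(0)=0$, so the hypotheses of the principle of linearized stability are met at the functional-analytic level.

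The next step is to exploit the spectral information provided by the proposition preceding Proposition~\ref{proellprb}: $A$ has compact resolvent, $\lambda=0$ is a semisimple eigenvalue of multiplicity $n+1$ with $N(A)\cong T_{z^*}\cE$, all other spectrum lies strictly in the right half-plane, and $-A$ has maximal $L_p$-regularity with compact analytic semigroup. Consequently $X_0$ decomposes as $X_0 = N(A)\oplus R(A)$ with the spectral projection $P$ onto $N(A)$; in the $R(A)$ direction $-A$ is exponentially stable with some decay rate $-\omega_0<0$. I would then parametrize the nearby equilibria in $\cE$ by a local chart $\psi: B_r(0)\subset N(A)\to \cE$ with $\psi(0)=(0,\Sigma)$, pulled back via the Hanzawa map so that each $(0,S_{R^\prime}(x_0^\prime))$ close to $(0,\Sigma)$ corresponds to an equilibrium $z^*_\zeta$ of the transformed problem depending analytically on $\zeta\in N(A)$.

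The heart of the argument is the standard decomposition $z(t) = z^*_{\zeta(t)} + y(t)$ with $y(t)\in R(A)$ for $t\geq 0$, or equivalently writing $Pz$ for the ``equilibrium coordinate'' and $(I-P)z$ for the ``stable coordinate''. Subtracting the identity $Az^*_\zeta = F(z^*_\zeta)$ from the evolution equation yields, for $y$ and $\zeta$, a coupled system of the form
\begin{equation*}
\dot y + A_s y = (I-P)\bigl[F(z^*_\zeta + y) - F(z^*_\zeta)\bigr] -(I-P)\psi^\prime(\zeta)\dot\zeta, \qquad \dot\zeta = h(\zeta,y),
\end{equation*}
where $A_s$ is the part of $A$ in $R(A)$ and $h$ is real-analytic with $h(0,0)=0$, $\partial_y h(0,0)=0$. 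Using maximal $L_p$-regularity of $A_s$ in the time-weighted spaces $\mathbb{E}_\mu,\mathbb{F}_\mu$ together with the smallness of $F^\prime$ near $0$, a contraction mapping argument (analogous to the local well-posedness of Theorem~\ref{wellposed} but on the half-line with exponential weights $e^{\omega t}$ for some $0<\omega<\omega_0$) produces a global solution with $\|e^{\omega\cdot}y\|_{\mathbb{E}_\mu(\mathbb{R}_+)}<\infty$ and $\zeta(t)\to\zeta_\infty$ at exponential rate. This simultaneously gives stability (by choosing initial perturbations small) and convergence $z(t)\to z^*_{\zeta_\infty}$ in the topology of $\mathbb{E}_\mu$, which embeds into $W^{2-2/p}_p(\Omega\backslash\Gamma(t))\times W^{3-2/p}_p$ uniformly in $t$ by the trace theorem.

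Finally, one transfers the result back to the original moving-domain formulation: the Hanzawa parametrization is a $C^1$-diffeomorphism between a neighbourhood of $(0,\Sigma)$ in $\cPM$ and a neighbourhood of $0$ in the natural trace space for $(v,h)$, so the exponential convergence of $(v(t),h(t))$ in the transformed variables translates to the claimed exponential convergence of $(u(t),\Gamma(t))$ to $(0,\Sigma_\infty)$, where $\Sigma_\infty$ is the sphere corresponding to $\zeta_\infty$. The main obstacle I anticipate is the bookkeeping for the pressure and the compatibility conditions: the phase manifold $\cPM$ encodes them, and one must verify that the decomposition $z=z^*_\zeta+y$ respects these conditions so that the contraction argument stays inside $\cPM$. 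This is handled by noting that $N(A)$ is tangent to the equilibrium manifold, so the constrained nonlinear problem is parametrized precisely by $(P(I-P)y,\zeta)$ with the compatibility conditions enforced automatically by the choice of function spaces $\mathbb{E}_\mu$ in Corollary~\ref{time-weight}. Together with the fact that the energy functional from Proposition~\ref{energy} is a strict Lyapunov functional on $\cPM$, which rules out oscillatory limits, this completes the argument.
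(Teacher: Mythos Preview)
Your overall strategy—reduce to an abstract equation $\dot z + Az = F(z)$ in $X_0$ and invoke the generalized principle of linearized stability of \cite{PSZ09}—is the right one, and your use of the spectral information about $A$ is correct. However, there is a genuine gap at the point where you claim the transformed problem \eqref{tfbns2} can be written in this abstract form with $F:X_\gamma\to X_0$.

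The difficulty is that the solution $u$ of \eqref{tfbns2} is \emph{not} solenoidal: it satisfies ${\rm div}\,u = M_1(h):\nabla u$, and similarly the tangential stress condition is inhomogeneous, $\cP_\Sigma[\![\mu(\nabla u+[\nabla u]^{\sf T})]\!]\nu_\Sigma = G_\tau(h)\nabla u$. Hence $(u,h)$ does not belong to $X_0 = L_{p,\sigma}(\Omega)^n\times W^{2-1/p}_p(\Sigma)$, nor to $D(A)$, and the nonlinearities $M_1(h):\nabla u$, $G_\tau(h)\nabla u$, $G_\nu(h)\nabla u$ do not sit in $X_0$ as a right-hand side $F(z)$—they live in the divergence and stress boundary data spaces. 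This is not a matter of bookkeeping or of the compatibility conditions being ``enforced automatically by the choice of function spaces $\mathbb{E}_\mu$''; the abstract framework of \cite{PSZ09} simply does not accept inhomogeneous constraints of this type.

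The paper resolves this by a two-layer reduction. First (Step~1 of the proof) it parametrizes $\cPM$ near $(0,\Sigma)$ over the linear space $X_\gamma$ via a map $\Phi(\tilde u,\tilde h)=(\tilde u+\phi(\tilde u,\tilde h),\tilde h)$, where $\phi$ solves a \emph{stationary} Stokes problem \eqref{nonlinstokes1} carrying the inhomogeneous divergence and stress data; this uses Proposition~\ref{proellprb} and the implicit function theorem. Second (Step~2) it performs a time-dependent splitting $z=\bar z+\tilde z$, where $\bar z$ solves the $\omega$-shifted evolutionary analogue \eqref{nonlinstokes3} absorbing all non-solenoidal and inhomogeneous-boundary contributions, so that $\tilde z=(\tilde u,\tilde h)$ is genuinely in $X_0$ and satisfies $\dot{\tilde z}+A\tilde z = R(\tilde z)$. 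The price is that $R$ contains the term $\omega\bar z(\tilde z)$, which is a \emph{nonlocal but causal} functional of $\tilde z$; this requires a modification of the argument in \cite{PSZ09}, and the key estimate $|R_{nloc}(\tilde z)|_{\FF(t_0)}\leq \varepsilon|{\sf y}|_{\EE(t_0)}$ hinges on the observation that the velocity component of the center projection $P^c\tilde z$ vanishes (the kernel of $A$ consists of pure height functions). Without this decomposition your contraction argument cannot be set up in $X_0$, and the appeal to \cite{PSZ09} is not justified.
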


\begin{proof}
1.\,  To prepare, as in \cite{PrSi09c} we first parameterize the nonlinear phase manifold locally near $(0,\Sigma)$ over
\begin{align*}
X_\gamma&:=\{(u,h)\in [W^{2-2/p}_p(\Omega\backslash\Sigma)\times W^{3-2/p}_p(\Sigma)]\cap X_0:\, u=0 \mbox{ on }\partial\Omega,\\
&\qquad [\![u]\!]=0,\, \cP_\Sigma[\![\mu(\nabla u +[\nabla u]^{\sf T})]\!]\nu_\Sigma=0 \mbox{ on } \Sigma\}.
 \end{align*}
In particular, this will show that $X_\gamma$ is isomorphic to the tangent space $T_{(0,\Sigma)}\cP M$.

 For this purpose fix $\omega>0$ and solve for given $\tilde{z}:=(\tilde{u},\tilde{h})\in B_r^{X_\gamma}(0)$ the problem
\begin{equation}
\label{nonlinstokes1}
\begin{aligned}
\rho\omega \bar{u} -\mu\Delta \bar{u}+\nabla \bar{\pi}&=0
    &\ \hbox{in}\quad &\Omega\backslash\Sigma,\\
{\rm div}\,\bar{u}&= M_1(h):\nabla u& \hbox{in}\quad &\Omega\backslash\Sigma\\
-\cP_\Sigma[\![\mu(\nabla\bar{u}+[\nabla \bar{u}]^{\sf T})]\!]\nu_\Sigma &=G_\tau(h)\nabla u
	&\ \hbox{on}\quad &\Sigma,\\
-([\![\mu(\nabla \bar{u}
+[\nabla \bar{u}]^{\sf T})]\!]\nu_\Sigma|\nu_\Sigma) + [\![\bar{\pi}]\!]&= G_\nu(h)\nabla u +G_\gamma(h)
	&\ \hbox{on}\quad &\Sigma,\\
[\![\bar{u}]\!] &=0 &\ \hbox{on}\quad &\Sigma,\\
\bar{u}&=0 &\ \hbox{on} \quad&\partial\Omega,
\end{aligned}
\end{equation}
where $u=\bar{u}+\tilde{u}$ and $h=\tilde{h}$, i.e.\ $\bar{h}=0$.
We write this equation in short hand notation as  $L_\omega \bar{u}=N(\bar{z}+\tilde{z})$ in $ \bar{X}_\gamma=W^{2-2/p}_p(\Omega\backslash\Sigma)\cap H^1_p(\Omega)$. It is easily shown that $N$ is real analytic and  $N^\prime(0)=0$; see Section \ref{sect-lwp}. $L_\omega$ is invertible by Proposition \ref{proellprb}, hence the implicit function theorem yields a unique solution $\bar{u}=\phi(\tilde{u},\tilde{h})\in \bar{X}_\gamma$ near 0. $\phi$ is real analytic as well, and satisfies $\phi^\prime(0)=0$. Then we define
$$ \Phi(\tilde{u},\tilde{h})= (\tilde{u},\tilde{h}) +(\phi(\tilde{u},\tilde{h}),0).$$
Obviously, $\Phi$ is real analytic, $\Phi^\prime(0)=I$, $\Phi(B^{X_\gamma}_\rho(0))\subset \cPM$, and  $\Phi$ is injective.

Hence it remains to show local surjectivity near 0. So suppose that $\bar{z}:=(\bar{u},\bar{h})\in\cPM$ has sufficiently small norm. Solving the problem
\begin{equation}
\label{nonlinstokes2}
\begin{aligned}
\rho\omega u -\mu\Delta u+\nabla \pi&=0
    &\ \hbox{in}\quad &\Omega\backslash\Sigma,\\
{\rm div}\,u&= M_1(\bar{h}):\nabla \bar {u}& \hbox{in}\quad &\Omega\backslash\Sigma,\\
-\cP_\Sigma[\![\mu(\nabla u+[\nabla u]^{\sf T})]\!]\nu_\Sigma &=G_\tau(\bar{h})\nabla \bar{u}
	&\ \hbox{on}\quad &\Sigma,\\
-([\![\mu(\nabla u+[\nabla u]^{\sf T})]\!]\nu_\Sigma|\nu_\Sigma) + [\![\pi]\!]&= G_\nu(\bar{h})\nabla \bar{u} +G_\gamma(\bar{h})
	&\ \hbox{on}\quad &\Sigma,\\
[\![u]\!] &=0 &\ \hbox{on}\quad &\Sigma,\\
u&=0 &\ \hbox{on} \quad&\partial\Omega,
\end{aligned}
\end{equation}
by means of Proposition \ref{proellprb},
$(\tilde{u},\tilde{h}):= (\bar{u}-u,\bar{h})$ belongs to $X_\gamma$ and $\phi(\tilde{u},\tilde{h})=u$, showing surjectivity of $\Phi$ near 0. In particular, $\cPM$ is a real analytic manifold near $(0,\Sigma)$.

\bigskip

\noindent
2.\, Let $(u,\pi,h)$ be a solution on its maximal time interval $[0,t_*)$. In this step we decompose  $u=\bar{u}+\tilde{u}$, $\pi=\bar{\pi}+\tilde{\pi}$, $h=\bar{h}+\tilde{h}$, where $(\bar{u},\bar{\pi},\bar{h})$ solves the problem
\begin{equation}
\label{nonlinstokes3}
\begin{aligned}
\rho\omega \bar{u} +\rho\partial_t \bar{u}-\mu\Delta \bar{u}+\nabla \bar{\pi}&=F_u(u,\pi,h)
    &\hbox{in}\quad &\Omega\backslash\Sigma,\; t>0,\\
{\rm div}\,\bar{u}&=M_1(h):\nabla u&\hbox{in}\quad &\Omega\backslash\Sigma,\; t>0,\\
-\cP_\Sigma[\![\mu E(\bar{u})]\!]\nu_\Sigma &= G_\tau(h)\nabla u
	&\ \hbox{on}\quad &\Sigma,\; t>0,\\
-([\![\mu E(\bar{u})]\!]\nu_\Sigma|\nu_\Sigma)+[\![\bar{\pi}]\!]-\sigma\cA_\Sigma\bar{h}&=G_\nu(h)\nabla u\\ &+G_\gamma(h)-G_\gamma(h-\bar{h})
	&\ \hbox{on}\quad &\Sigma,\; t>0,\\
[\![\bar{u}]\!] &=0 &\ \hbox{on}\quad &\Sigma,\; t>0,\\
\bar{u}&=0&\hbox{on}\quad &\partial\Omega,\; t>0,\\
\omega \bar{h}+\partial_t\bar{h}-(\bar{u}|\nu_\Sigma)&=(M_0(h)\nabla_\Sigma h|u) &\ \hbox{on}\quad &\Sigma,\; t>0,\\
\bar{u}(0)=\bar{u}_0 \quad \hbox{in}\quad \Omega\backslash\Sigma,\quad \bar{h}(0)&=\bar{h}_0 & \hbox{on}\quad & \Sigma,
\end{aligned}
\end{equation}
with $\bar{z}_0=(\bar{u}_0,\bar{h}_0)=(\phi(\tilde{u}_0,\tilde{h}_0),0)$ and $E(\bar{u}):=\nabla \bar{u}+[\nabla \bar{u}]^{\sf T}$. Writing this problem
abstractly as $\LL_\omega \bar{z} =\NN(\bar{z}+\tilde{z})$ , by the implicit function theorem we obtain a unique solution $\bar{z}=\bar{z}(\tilde{z})$
in the function space $\EE(a)$ for each $a<t_*$.
Then $\tilde{z}$ is determined by the problem
\begin{equation}
\label{nonlinstokes4}
\begin{aligned}
\rho\partial_t \tilde{u}-\mu\Delta \tilde{u}+\nabla \tilde{\pi}&=\rho\omega \bar{u}
    &\ \hbox{in}\quad &\Omega\backslash\Sigma,\; t>0,\\
{\rm div}\,\tilde{u}&= 0& \hbox{in}\quad &\Omega\backslash\Sigma,\;t>0,\\
-\cP_\Sigma[\![\mu(\nabla\tilde{u}+[\nabla\tilde{u}]^{\sf T})]\!]\nu_\Sigma &=0
	&\ \hbox{on}\quad &\Sigma,\; t>0,\\
-([\![\mu(\nabla\tilde{u}+[\nabla\tilde{u}]^{\sf T})]\!]\nu_\Sigma|\nu_\Sigma)+[\![\tilde{\pi}]\!]-\sigma\cA_\Sigma\tilde{h}&=G_\gamma(\tilde{h})
	&\ \hbox{on}\quad &\Sigma,\; t>0,\\
[\![\tilde{u}]\!] &=0 &\ \hbox{on}\quad &\Sigma,\; t>0,\\
\tilde{u}&=0&\hbox{on}\quad &\partial\Omega,\; t>0,\\
\partial_t\tilde{h}-(\tilde{u}|\nu_\Sigma)&=\omega\bar{h} &\ \hbox{on}\quad &\Sigma,\; t>0,\\
\tilde{u}(0)=\tilde{u}_0 \quad \hbox{in}\quad \Omega\backslash\Sigma,\quad \tilde{h}(0)&=\tilde{h}_0 & \hbox{on}\quad & \Sigma.
\end{aligned}
\end{equation}
The last equation can be rewritten abstractly in $X_0$ employing the operator $A$ introduced above as
\begin{equation}\label{redns}\dot{\tilde{z}}+A\tilde{z}=R( \tilde{z}),\;t>0,\quad \tilde{z}(0)=\tilde{z}_0,\end{equation}
where
$$R(\tilde{z})=( \omega(I-T_1)\bar{u}(\tilde{z}) -T_2G_\gamma(\tilde{z}),\omega \bar{h}(\tilde{z})).$$
Note that $\bar{z}$ is a causal functional of $\tilde{z}$.

\bigskip

\noindent
3.\, Problem \eqref{redns} is of the form studied in \cite{PSZ09}, where the generalized principle of linearized stability is proved for abstract parabolic quasilinear problems of the form \eqref{redns}. The only difference is that here a part of $R$ is nonlocal, but causal in time. Therefore we only comment on the required modifications in the proof of Theorem 2.1 in \cite{PSZ09}. For this purpose we decompose
$$ R(\tilde{z})= R_{nloc}(\tilde{z})+R_{loc}(\tilde{z}):= ( \omega(I-T_1)\bar{u}(\tilde{z}),\omega \bar{h}(\tilde{z}))+(-T_2G_\gamma(\tilde{z}),0).$$
Observe that by construction, if $z$ is an equilibrium then $z=\tilde{z}$, hence $\bar{z}=0$. Therefore the equilibria are determined by the equation
$Az_*= R_{loc}(z_*)$. Further, we have an estimate of the form
$$ |R_{nloc}(\tilde{z})|_{\FF(t_0)}\leq \ve |\tilde{u}|_{\EE(t_0)},$$
provided $\tilde{z}_0$ is small in the norm of $X_\gamma$.

Let $P^c$ denote the projection in $X_0$ onto the kernel $N(A)$ along the range $R(A)$ of $A$ and let $P^s=I-P^c$ the complementary projection onto $R(A)$.
 As in the proof of Theorem 2.1 of \cite{PSZ09} we parameterize the set of equilibria $\cE$ near $0$ over $N(A)$ via a $C^1$-map ${\cxx}\mapsto {\cxx} +\Psi({\cxx})$ such that $\Psi(0)=\Psi^\prime(0)=0$. By ${\cxx}:=P^c\tilde{z}$ and ${\cy}:= P^s\tilde{z}-\Psi(P^c\tilde{z})$ we introduce the normal form of the problem which reads as follows.
\begin{align}\label{normalform}
\dot{\cxx}&= T(\cxx,\cy),\quad \cxx(0)=\cxx_0,\\
\dot{\cy} + A^s\cy &= S(\cxx,\cy),\quad \cy(0)=\cy_0,\nonumber
\end{align}
where \begin{align*}
T(\cxx,\cy)&=P^c[R({\cxx}+\Psi({\cxx})+{\cy})-R({\cxx}+\Psi({\cxx}))],\\
S(\cxx,\cy)&=P^s[R({\cxx}+\Psi({\cxx})+{\cy})-R({\cxx}+\Psi({\cxx}))]-\Psi^\prime({\cxx})T({\cxx},{\cy}),
\end{align*}
since $P^cR({\cxx}+\Psi({\cxx}))=0$ and $P^sR({\cxx}+\Psi({\cxx}))= A^s\Psi({\cxx})= A^s[{\cxx}+\Psi({\cxx})]$.
The first component of $P^c\tilde{z}$ equals zero, since the eigenfunctions of $A$ for eigenvalue 0 have vanishing velocity part. This implies
$|\tilde{u}|_{\EE_1(t_0)}\leq |{\cy}|_{\EE(t_0)}$, hence the nonlocal part is estimated as
$$ |R_{nloc}(\tilde{z})|_{\FF(t_0)}\leq \ve |\tilde{u}|_{\EE_1(t_0)}\leq\ve|{\cy}|_{\EE(t_0)},$$ for any $t_0>0$.
The local parts of $T$ and $R$ can be estimated by $\ve|{\cy}|_{\EE(t_0)}$, as in the proof of Theorem 2.1 in \cite{PSZ09}.
Taking these observations into account, we may proceed as in \cite{PSZ09} to prove global existence of $\tilde{z}$, its stability and convergence to another equilibrium, provided $\tilde{z}_0$ is small in $X_\gamma$.
\end{proof}

\section{ Global Existence and Convergence}\label{sect-gec}

\noindent
Again we assume for simplicity that the phases are connected.
There are basically two obstructions against global existence:

\noindent
- {\bf regularity}: the norms of either $u(t)$ or $\Gamma(t)$ become unbounded;\\
- {\bf geometry}: the topology of the interface changes,
or the interface touches the boundary of $\Omega$.

Note that the {\em phase volumes} are  preserved by the semiflow.

We say that a solution $(u,\Gamma)$ satisfies a {\bf uniform ball condition},
if there is a radius $r>0$ such that $\Gamma([0,t_*))\subset \cMH^2(\Omega,r)$.
Note that this condition
bounds the curvature of $\Gamma(t)$, and prevents it to touch the outer
boundary $\partial G$, or to undergo topological changes.

Combining the above results, we obtain the following result
on the asymptotic behavior of solutions.

\begin{theorem}\label{convergence} Let $p>n+2$. Suppose that $(u,\Gamma)$
is a solution of the two-phase Navier-Stokes problem with surface tension
on the maximal time interval $[0,t_*)$.
Assume the following on $[0,t_*)$:\\
(i)  \, $|u(t)|_{W^{2-2/p}_p}+|\Gamma(t)|_{W^{3-2/p}_p}\leq M<\infty$;\\
(ii) \, $(u,\Gamma)$ satisfies a uniform ball condition. \\
Then $t_*=\infty$, i.e.\ the solution exists globally, and it
converges in $\cPM$ to an equilibrium $(0,\Gamma_\infty)\in\cE$.
Conversely, if $(u,\Gamma)$ is a global solution in $\cPM$ which converges to an equilibrium $(u_\infty,\Gamma_\infty)\in\cE$ in $\cPM$,
then (i) and (ii) are valid.
\end{theorem}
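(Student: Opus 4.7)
The plan is to combine three ingredients already established: the local semiflow of Theorem \ref{semiflow}, the strict Ljapunov property of the energy functional (Proposition \ref{energy}), and the normal stability result Theorem \ref{stability}. First I would rule out finite $t_\ast$. The local existence time $a$ produced by Theorem \ref{wellposed} when restarting from $(u(t),\Gamma(t))\in\cPM$ depends continuously on $|u(t)|_{W^{2-2/p}_p}+|\Gamma(t)|_{W^{3-2/p}_p}$ and on the ball-condition radius, so (i) and (ii) force a uniform positive lower bound on $a$, contradicting maximality unless $t_\ast=\infty$.

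\textbf{Precompactness and the $\omega$-limit set.} To extract a limit I use parabolic smoothing: by the analyticity statement of Theorem \ref{wellposed} combined with maximal $L_p$-regularity applied to the shifted problems on $[t,t+1]$, $t\geq 1$, the uniform bounds (i)--(ii) yield uniform bounds for $(u(t+\cdot),\Gamma(t+\cdot))$ in a regularity class strictly stronger than $\EE(1)$, and the compact Sobolev embedding into $\EE(1)$ gives precompactness of $\{(u(t),\Gamma(t)):t\geq 1\}$ in $\cPM$. Hence the $\omega$-limit set $\omega_+(u_0,\Gamma_0)$ is nonempty, compact and connected in $\cPM$. By Proposition \ref{energy} the energy $\Phi$ is a strict Ljapunov functional, whence $\omega_+(u_0,\Gamma_0)\subset\cE$; thus every cluster point is of the form $(0,\Sigma_\infty)$ with $\Sigma_\infty=S_R(x_0)$, $\bar B_R(x_0)\subset\Omega$.

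\textbf{Convergence.} Pick any $z_\ast=(0,\Sigma_\infty)\in\omega_+(u_0,\Gamma_0)$ and apply Theorem \ref{stability} at $z_\ast$: there exist $\delta,\varepsilon>0$ such that every solution starting $\delta$-close to $z_\ast$ in $\cPM$ exists globally and converges at exponential rate to some equilibrium in $\cE$. Since $z_\ast$ is an $\omega$-limit point, there is $t_0$ with $\mathrm{dist}_{\cPM}((u(t_0),\Gamma(t_0)),z_\ast)<\delta$; applying Theorem \ref{stability} from the shifted initial time $t_0$ yields exponential convergence $(u(t),\Gamma(t))\to(0,\Gamma_\infty)$ in $\cPM$ as $t\to\infty$.

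\textbf{Converse and main obstacle.} Conversely, if $(u(t),\Gamma(t))\to(0,\Gamma_\infty)$ in $\cPM$, then (i) follows at once from continuity of the norms along a convergent trajectory; using $p>n+2$ and $W^{3-2/p}_p\hookrightarrow C^2$, $\Gamma(t)\to\Gamma_\infty$ in $C^2$, and since $\Gamma_\infty$ satisfies a strict ball condition with some radius $r_\infty>0$, the $\Gamma(t)$ inherit a uniform ball condition of radius $r_\infty/2$ for $t$ large, while the compact initial segment is covered by continuity in $\cMH^2(\Omega)$; this gives (ii). The main technical obstacle is the precompactness step: the uniform bound (i) alone produces only weak compactness in $\cPM$, so one genuinely needs the parabolic smoothing and instantaneous analytic regularity from Theorem \ref{wellposed} to promote this to strong precompactness in the base topology in which Theorem \ref{stability} is formulated, so that the Ljapunov/$\omega$-limit argument can be closed up with the normal stability result.
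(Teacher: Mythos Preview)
Your overall architecture---rule out finite $t_*$, establish precompactness of the orbit in $\cPM$, use the strict Ljapunov functional to place $\omega_+$ inside $\cE$, then invoke Theorem~\ref{stability} from a time at which the solution has entered the basin of attraction---is exactly the paper's route, and your converse argument is fine.

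The substantive difference lies in how precompactness is obtained. You propose to smooth \emph{upward}: use analyticity and maximal regularity on $[t,t+1]$ to get uniform bounds in a class strictly stronger than the trace space, then apply a compact embedding downward. As written this has a gap: maximal $L_p$-regularity on $[t,t+1]$ only yields a uniform bound in $\EE(1)$, whose time-trace space is precisely $W^{2-2/p}_p\times W^{3-2/p}_p$, not anything better, and the analyticity statement in Theorem~\ref{wellposed} is qualitative and does not by itself furnish \emph{uniform} higher-norm bounds along the orbit. The paper instead goes \emph{downward first}: the bounded orbit $\Gamma([0,t_*))\subset W^{3-2/p}_p(\Omega,r)$ is relatively compact in $W^{3-2/p-\varepsilon}_p(\Omega,r)$ by Rellich, so one can cover it by finitely many balls centered at analytic reference surfaces $\Sigma_k$ (this finite covering is essential because every Hanzawa transform is tied to a fixed $\Sigma$); the pull-backs of $u(t)$ are then relatively compact in $W^{2-2/p-\varepsilon}_p(\Omega\backslash\Sigma_k)$. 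Now the key tool is Corollary~\ref{time-weight} with $\mu=1-\varepsilon$: it allows the flow to be restarted from these \emph{weaker} initial data on a common interval $(0,a]$, and the time-weight ensures the solution regains full $W^{2-2/p}_p\times W^{3-2/p}_p$ regularity at $t=a$. Continuous dependence on the (relatively compact) weak data then gives relative compactness of $\{(u(t+a),\Gamma(t+a))\}$ in $\cPM$, and $t_*=\infty$ drops out of the same argument rather than requiring a separate uniform-existence-time step. Your identification of precompactness as the main obstacle is correct; the missing ingredient is precisely this downward-then-up detour through Corollary~\ref{time-weight} and the finite covering by reference surfaces.
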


\begin{proof} Assume that  (i) and (ii) are valid. Then $\Gamma([0,t_*))\subset W^{3-2/p}_p(\Omega,r)$ is bounded, hence relatively compact in
$W^{3-2/p-\ve}_p(\Omega,r)$. Thus we may cover $\Gamma([0,t_*))$ by finitely many balls with centers $\Sigma_k$  such that
${\rm dist}_{W^{3-2/p-\ve}}(\Gamma(t),\Sigma_j)\leq \delta$ for some $j=j(t)$, $t\in[0,t_*)$. Let $J_k=\{t\in[0,t_*):\, j(t)=k\}$; using for each $k$ a Hanzawa-transformation $\Theta_k$, we see that the pull backs $\{u(t,\cdot)\circ\Theta_k:\, t\in J_k\}$ are bounded in $W^{2-2/p}_p(\Omega\backslash \Sigma_k)$, hence relatively compact in $W^{2-2/p-\ve}_p(\Omega\backslash\Sigma_k)$. Employing now Corollary \ref{time-weight} with $\mu=1-\varepsilon$ we obtain solutions
 $(u^1,\Gamma^1)$ with initial configurations $(u(t),\Gamma(t))$ in the phase manifold on a common time interval say $(0,a]$, and by uniqueness we have
 $(u^1(a),\Gamma^1(a))=(u(t+a),\Gamma(t+a))$. Continuous dependence implies then relative compactness of $\{(u(\cdot),\Gamma(\cdot)):\, 0\leq t<t_*\}$ in $\cPM$, in particular $t_*=\infty$ and the orbit $(u,\Gamma)(\R_+)\subset\cPM$ is relatively compact.
The energy is a strict Ljapunov functional, hence the limit set $\omega(u,\Gamma)$
of a solution is contained in the set $\cE$ of equilibria.
By compactness $\omega(u,\Gamma)\subset \cPM$ is non-empty, hence the solution comes close to $\cE$. Finally, we apply the convergence result Theorem \ref{stability} to complete the sufficiency part of the proof. Necessity follows by a compactness argument.
\end{proof}

\section{Appendix: Transmission Problems} \label{sect-tp}
\noindent
In this section we provide some results, concerning the existence and uniqueness of solutions to the transmission problem
    \begin{align}
    \begin{split}\label{trans0}
    \lambda q-\Delta q&=f,\quad x\in \Omega\backslash\Gamma\\
    [\![\rho q]\!]&=g,\quad x\in\Gamma,\\
    [\![\partial_{\nu_\Gamma} q]\!]&=h_1,\quad x\in\Gamma,\\
    \delta\partial_{\nu_\Omega}q+(1-\delta)q&=h_{2,\delta},\quad x\in\partial\Omega,\ \delta\in\{0,1\},
    \end{split}
    \end{align}
where $\lambda\ge 0$,
    $$\rho(x):=\rho_1\chi_{\Omega_1}(x)+\rho_2\chi_{\Omega_2}(x),\quad x\in\Omega\backslash\Gamma,$$
and $\rho_j>0$. To be precise, we will study \eqref{trans0} in different functional analytic settings. We begin by stating the result for the 'classical' case, i.e.\ if the basic space is given by $L_p(\Omega)$.
\begin{theorem}\label{thmtrans0}
Let $\Omega\subset\R^n$ open, $1<p<\infty$, $f\in L_p(\Omega)$, $g\in W_p^{2-1/p}(\Gamma)$, $h_1\in W_p^{1-1/p}(\Gamma)$ and $h_{2,\delta}\in W_p^{2-\delta-1/p}(\partial\Omega)$, $\delta\in\{0,1\}$ be given. Then, for each $\lambda>0$, there exists a unique solution $q\in H_p^2(\Omega\backslash\Gamma)$ of \eqref{trans0} and a constant $C_1>0$ such that
    $$|q|_{H_p^2(\Omega\backslash\Gamma)}\le C_1\left(|f|_{L_p(\Omega)}+|g|_{W_p^{2-1/p}(\Gamma)}+|h_1|_{W_p^{1-1/p}(\Gamma)}+|h_{2,\delta}|_{W_p^{2-\delta-1/p}(\partial\Omega)}\right).$$
If in addition $J=[0,a]$, $f=f(t,x)$, $f\in H_p^1(J;L_p(\Omega))$ and $g=h_1=h_{2,\delta}=0$, then for each $\lambda>0$, there exists a unique solution $q\in H_p^1(J;H_p^2(\Omega\backslash\Gamma))$, and the estimate
    $$||q||_{H_p^1(J;H_p^2(\Omega\backslash\Gamma))}\le C_2||f||_{H_p^1(J;L_p(\Omega))}$$
holds with some constant $C_2>0$.
\end{theorem}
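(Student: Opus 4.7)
The plan is to prove \thmref{thmtrans0} in three stages: first reduce to homogeneous interface and boundary data by a lift, then solve the resulting resolvent problem by localization to model problems in flat geometry, and finally derive the time-regularity assertion as a consequence of the isomorphism property and the fact that the resolution operator commutes with $\partial_t$.

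First I would lift the data. By standard trace and extension theorems applied in local charts flattening $\Gamma$ and $\partial\Omega$, there exists $q_0\in H_p^2(\Omega\setminus\Gamma)$ realizing $[\![\rho q_0]\!]=g$, $[\![\partial_{\nu_\Gamma}q_0]\!]=h_1$ and $\delta\partial_{\nu_\Omega}q_0+(1-\delta)q_0=h_{2,\delta}$ with norm bounded by the sum of the data norms. Setting $\tilde q=q-q_0$ and $\tilde f=f-\lambda q_0+\Delta q_0\in L_p(\Omega)$, the problem reduces to \eqref{trans0} with $g=h_1=h_{2,\delta}=0$ and modified right-hand side $\tilde f$. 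It therefore suffices to establish that the operator $A_pq:=-\Delta q$ with domain
\begin{equation*}
D(A_p):=\{q\in H_p^2(\Omega\setminus\Gamma):[\![\rho q]\!]=0,\ [\![\partial_{\nu_\Gamma}q]\!]=0 \text{ on } \Gamma,\ \delta\partial_{\nu_\Omega}q+(1-\delta)q=0 \text{ on }\partial\Omega\}
\end{equation*}
admits a bounded inverse $(\lambda+A_p)^{-1}:L_p(\Omega)\to H_p^2(\Omega\setminus\Gamma)$ for every $\lambda>0$.

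To obtain this resolvent I would localize: cover $\Gamma$ and a tubular neighborhood of $\partial\Omega$ by finitely many charts, flattening $\Gamma$ to $\R^{n-1}\times\{0\}$ and $\partial\Omega$ to $\partial\R^n_+$. The corresponding model problems are the two-phase transmission resolvent on $\dot\R^n$ with constant densities $\rho_1,\rho_2$ and the homogeneous transmission conditions $[\![\rho q]\!]=0,\ [\![\partial_y q]\!]=0$, respectively the one-phase Dirichlet or Neumann resolvent on $\R^n_+$. Both are solvable by partial Fourier transform in the tangential variables, yielding explicit $2\times 2$ resolvent symbols (for the transmission case) whose $L_p$-boundedness follows from the Mikhlin multiplier theorem; this simultaneously gives $\cR$-boundedness of the resolvent family in the appropriate sector. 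A partition of unity then glues the local solutions, the commutator errors from localization being of lower order and absorbed by choosing $\lambda$ large; the remaining range $\lambda>0$ is handled by a standard compactness-plus-uniqueness argument, noting that uniqueness is immediate in $L_2$ by the energy identity (multiplying by $\bar q/\rho$ and integrating by parts, the jump terms vanish by the homogeneous transmission conditions) and extends to all $p$ by elliptic regularity.

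The time-regularity assertion is then essentially a free consequence. Writing $T_\lambda:=(\lambda+A_p)^{-1}$, under the hypotheses $g=h_1=h_{2,\delta}=0$ the solution is simply $q(t)=T_\lambda f(t)$. Since $T_\lambda\in\cL(L_p(\Omega),H_p^2(\Omega\setminus\Gamma))$ is time-independent, it commutes with $\partial_t$, so $\partial_t q=T_\lambda\partial_t f\in L_p(J;H_p^2(\Omega\setminus\Gamma))$ and the estimate follows with $C_2\lesssim(1+\|T_\lambda\|_{\cL})\|T_\lambda\|_{\cL}$. The main obstacle I anticipate is the rigorous $L_p$-analysis of the flat two-phase model: when $\rho_1\neq\rho_2$ the condition $[\![\rho q]\!]=0$ genuinely couples the two half-space pieces, and one must verify that the explicit $2\times 2$ symbol obtained from the Fourier transform satisfies the Mikhlin condition uniformly in $\lambda$ (or at least for each fixed $\lambda>0$). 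Everything else---the lift, the partition of unity, and the passage to time-regularity---is routine once this symbolic computation is in place.
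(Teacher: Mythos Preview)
Your proposal is correct. The paper's own proof is much terser: for the first assertion it simply cites the general $L_p$-theory of Denk--Hieber--Pr\"uss \cite{DHP07}, observing that the transmission conditions at $\Gamma$ and the boundary condition at $\partial\Omega$ satisfy the Lopatinskii--Shapiro condition, which is exactly what guarantees the flat model problems you describe are solvable. Your outline (lift, localize, solve model problems via Fourier multipliers, glue, absorb commutators for large $\lambda$, compactness for the rest) is precisely the machinery underlying \cite{DHP07}, so you are reproducing by hand the relevant special case of that reference rather than invoking it as a black box. For the second assertion the paper does exactly what you do: differentiate in $t$ and use that the time-independent solution operator $T_\lambda$ commutes with $\partial_t$, together with uniqueness. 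Both routes are valid; the paper's is shorter because the heavy lifting has already been packaged in \cite{DHP07}, while yours is self-contained.
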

\begin{proof}
The first assertion basically follows from \cite{DHP07}, since the Lopatinskii-Shapiro condition is satisfied at $\Gamma$ and $\partial\Omega$. The second assertion follows from the first one by differentiating \eqref{trans0} w.r.t.\ $t$ and by employing the uniqueness of the solution of \eqref{trans0}.
\end{proof}
We will also need a result for the case $\lambda=0$. To this end, let $\Omega\subset\R^n$ be a bounded domain, $g=h_1=h_{2,\delta}=0$ and $f\in L_p(\Omega)$. Define $A_\delta$ by $A_\delta q=-\Delta q$, with domain
    \begin{multline*}
    D(A_\delta)=\{q\in H_p^2(\Omega\backslash\Gamma):[\![\rho q]\!]=[\![\partial_{\nu_{\Gamma}}q]\!]=0\ \mbox{on}\ \Gamma,\\
    \delta\partial_{\nu_\Omega}q+(1-\delta)q=0,\ \mbox{on}\ \partial\Omega\},\ \delta\in\{0,1\}.
    \end{multline*}
Since
    $$D(A_\delta)\compemb L_p(\Omega),$$
the resolvent of $A_\delta$ is compact and therefore the spectral set $\sigma(A_\delta)$
consists solely of a countably infinite sequence of isolated
eigenvalues. In case $\delta=1$ it can be readily checked that $0$ is a simple eigenvalue of $A_1$, hence $L_p(\Omega)=N(A_1)\oplus R(A_1)$. The kernel $N(A_1)$ of $A_1$ is given by $N(A_1)=\mathbb{K}\mathds{1}_\rho$, where
    $$\mathds{1}_\rho(x):=\chi_{\Omega_1}(x)+\frac{\rho_1}{\rho_2}\chi_{\Omega_2}(x),\quad x\in\Omega\backslash\Gamma.$$
and $R(A_1)=\{f\in L_p(\Omega):(f|\mathds{1}_\rho)=0\}$. Therefore \eqref{trans0} has a unique solution $q\in H_p^2(\Omega\backslash\Gamma)\ominus \mathbb{K}\mathds{1}_\rho$, provided $(f|\mathds{1}_\rho)=0$. In case of Dirichlet boundary conditions, i.e.\ $\delta=0$, it holds that $N(A_0)=\{0\}$, hence or each $f\in L_p(\Omega)$, the system \eqref{trans0} admits a unique solution $q\in H_p^2(\Omega\backslash\Gamma)$.
\begin{theorem}\label{thmtrans0.1}
Let $\Omega\subset\R^n$ a bounded domain, $1<p<\infty$, $f\in L_p(\Omega)$, $g=h_1=h_2=0$ and $\lambda=0$. Then the following assertions hold
    \begin{enumerate}
    \item If $\delta=0$, then there exists a unique solution $q\in H_p^2(\Omega\backslash\Gamma)$ of \eqref{trans0}.
    \item If $\delta=1$ and $(f|\mathds{1}_\rho)=0$, then there exists a unique solution $q\in H_p^2(\Omega\backslash\Gamma)\ominus \mathbb{K}\mathds{1}_\rho$.
    \end{enumerate}
If in addition $J=[0,a]$, $f=f(t,x)$ and $f\in H_p^1(J;L_p(\Omega))$ s.t.\ $f(t,\cdot)\in R(A_\delta)$ for a.e.\ $t\in J$, then $q\in H_p^1(J;H_p^2(\Omega\backslash\Gamma)\ominus N(A_\delta))$.
\end{theorem}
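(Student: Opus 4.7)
The plan is to treat $A_\delta$ as a closed operator in $L_p(\Omega)$ with compact resolvent, determine its kernel by a weighted energy identity, and then invoke the Fredholm alternative. Compactness of the resolvent follows by combining the elliptic estimate from the first part of Theorem \ref{thmtrans0} (applied with $\lambda > 0$ and trivial data $g = h_1 = h_{2,\delta} = 0$), which yields a bounded inverse $(\lambda + A_\delta)^{-1}: L_p(\Omega) \to D(A_\delta) \subset H_p^2(\Omega\backslash\Gamma)$, with the Rellich embedding $D(A_\delta) \compemb L_p(\Omega)$. Consequently $\sigma(A_\delta)$ is a discrete sequence of eigenvalues of finite algebraic multiplicity and every $\mu - A_\delta$ is Fredholm of index zero.

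To identify $N(A_\delta)$, I would multiply $-\Delta q = 0$ by the weighted multiplier $\rho\bar q$ and integrate by parts on each $\Omega_j$ separately. The boundary integral on $\partial\Omega$ vanishes (directly when $\delta = 0$, via $\partial_{\nu_\Omega}q = 0$ when $\delta = 1$), while the interface contributions combine into $\int_\Gamma [\![\rho\bar q]\!]\partial_{\nu_\Gamma}q\,dS$, which vanishes by the transmission conditions $[\![\rho q]\!] = 0$ and $[\![\partial_{\nu_\Gamma}q]\!] = 0$. The resulting identity $\int_\Omega \rho|\nabla q|^2\,dx = 0$ forces $q$ to be piecewise constant. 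In the Dirichlet case ($\delta = 0$) the boundary condition gives $q|_{\Omega_2} = 0$ and then $[\![\rho q]\!] = 0$ implies $q|_{\Omega_1} = 0$, so $N(A_0) = \{0\}$; being injective and Fredholm of index zero, $A_0 : D(A_0) \to L_p(\Omega)$ is an isomorphism, which proves assertion (1). In the Neumann case ($\delta = 1$), the piecewise constants satisfying $[\![\rho q]\!] = 0$ form exactly $\mathbb{K}\mathds{1}_\rho$, so $N(A_1) = \mathbb{K}\mathds{1}_\rho$.

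The same Green identity, performed with an arbitrary test function $v \in D(A_1)$ in place of $q$ and using $[\![\rho v]\!] = 0$ to kill the interface term, yields $(A_1 u, v)_\rho = \int_\Omega \rho \nabla u \cdot \nabla\bar v\,dx$ in the weighted pairing $(u,v)_\rho := \int_\Omega \rho u\bar v\,dx$, and hence $A_1$ is symmetric with respect to $(\cdot,\cdot)_\rho$. Combined with the surjectivity of $\lambda + A_1$ for $\lambda > 0$ supplied by Theorem \ref{thmtrans0}, this gives $\rho$-selfadjointness of $A_1$, so $\lambda = 0$ is a semisimple eigenvalue of multiplicity one, yielding the topological direct sum $L_p(\Omega) = N(A_1) \oplus R(A_1)$ with $R(A_1) = \{f : (f|\mathds{1}_\rho) = 0\}$. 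The restriction of $A_1$ to $D(A_1) \ominus N(A_1)$ is thereby a bijection onto $R(A_1)$, giving the unique solution of assertion (2).

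For the time-dependent statement, the inverse of the above bijection is bounded by the open mapping theorem; applying it pointwise in $t$ to $f(t,\cdot) \in R(A_\delta)$ transfers the $H_p^1(J; L_p(\Omega))$-regularity of $f$ to $q \in H_p^1(J; H_p^2(\Omega\backslash\Gamma)\ominus N(A_\delta))$. The main technical delicacy throughout is the choice of the weighted multiplier $\rho\bar q$ rather than $\bar q$: only with this weighting do the transmission conditions $[\![\rho q]\!] = 0$ and $[\![\partial_{\nu_\Gamma}q]\!] = 0$ conspire to annihilate the interface terms in Green's identity, after which the remainder reduces to standard Fredholm theory and a pointwise application of the solution operator.
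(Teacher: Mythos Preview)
Your proposal is correct and follows essentially the same route as the paper. The paper's argument (given in the paragraph preceding the theorem statement) is identical in outline: compact embedding $D(A_\delta)\compemb L_p(\Omega)$ gives compact resolvent, hence discrete spectrum; then one checks that $N(A_0)=\{0\}$ and that $0$ is a simple eigenvalue of $A_1$ with $N(A_1)=\mathbb{K}\mathds{1}_\rho$. The paper leaves the verification of simplicity to the reader (``it can be readily checked''), and your weighted energy identity---multiplying by $\rho\bar q$ so that the transmission conditions annihilate the interface terms---is exactly the computation the authors perform later in the same section for the related weak operator (see the display following \eqref{trans14}). One small caution: your phrasing ``$\rho$-selfadjointness of $A_1$'' is literally an $L_2$ statement, so to transfer semisimplicity to $L_p$ you should note that the generalized eigenspaces are $p$-independent (the eigenfunctions are smooth by elliptic bootstrap), after which the $L_2$ argument suffices.
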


\subsection{A weak transmission problem}

Here we study the (weak) transmission problem
    \begin{align*}
    (\nabla q|\nabla\phi)_{L_2(\Omega)}&=(f|\nabla\phi)_{L_2(\Omega)},\quad
\phi\in H_{p'}^1(\Omega),\\
    [\![\rho q]\!]&=g,\quad x\in\Gamma,
    \end{align*}
where $\Omega\subset\R^n$ is open and bounded with $\partial\Omega\in C^2$. We want to show that this problem admits a unique solution $q\in
\dot{H}_p^1(\Omega\backslash\Gamma)$, that satisfies the estimate
    $$|\nabla q|_{L_p(\Omega)}\le
C\left(|f|_{L_p(\Omega;\rr^n)}+|g|_{W_p^{1-1/p}(\Gamma)}\right),$$
provided $f\in L_p(\Omega;\rr^n)$ and $g\in W_p^{1-1/p}(\Gamma)$. We will first treat the case $f=0$, $g\in W_p^{2-1/p}(\Gamma)$ and consider the problem
    \begin{align}
    \begin{split}\label{trans0.1}
    \lambda(q|\phi)+(\nabla q|\nabla\phi)_{L_2(\Omega)}&=0,\quad
\phi\in H_{p'}^1(\Omega),\\
    [\![\rho q]\!]&=g,\quad x\in\Gamma.
    \end{split}
    \end{align}
with $\lambda>0$. Theorem \ref{thmtrans0} then yields a strong unique solution $q\in H_p^2(\Omega\backslash\Gamma)$ of \eqref{trans0} with $f=h_1=h_2=0$ which is also the unique solution of \eqref{trans0.1}. This follows from integration by parts. Our aim is to derive an estimate which is of the form
    $$|q|_{H_p^1(\Omega\backslash\Gamma)}\le C|g|_{W_p^{1-1/p}(\Gamma)},$$
which will be done by a localization argument. For this purpose we consider first the following auxiliary
transmission problem
    \begin{align}
    \begin{split}\label{trans1}
    \lambda q-\Delta q&=f,\quad x\in \dot{\mathbb{R}}^{n},\\
    [\![\rho q]\!]&=g,\quad x\in\rr^{n-1},\\
    [\![\partial_\nu q]\!]&=h,\quad x\in\rr^{n-1},
    \end{split}
    \end{align}
with data $f\in L_p(\mathbb{R}^{n})$, $g\in
W_p^{2-1/p}(\rr^{n-1})$ and $h\in W_{p}^{1-1/p}(\rr^n)$, which will play an important role in the forthcoming localization procedure. Solve the full space problem
    $$\lambda q-\Delta q=f,\quad x\in \mathbb{R}^{n},$$
to obtain a unique solution $q_1=(\lambda-\Delta)^{-1}f\in
H_p^2(\rr^{n})$, provided ${\rm Re}\lambda>0$. In the sequel we
will always assume that $\lambda$ is real and $\lambda\ge 1$. In
particular, it follows that
    \begin{equation}\label{transq1}
    \lambda^{1/2}|q_1|_{L_p(\rr^{n})}+|\nabla q_1|_{L_p(\rr^{n})}\le C|f|_{H_p^{-1}(\rr^{n})},
    \end{equation}
with some constant $C>0$ being independent of $\lambda\ge 1$,
since
    \begin{align*}
    \lambda^{1/2}|(\lambda-\Delta)^{-1}f|_{L_p(\rr^{n})}&\le C\lambda^{1/2}||(I-\Delta)^{1/2}(\lambda-\Delta)^{-1}||_{\cB(L_p;L_p)}|f|_{H_p^{-1}(\rr^n)}\\
    &\le C||(I-\Delta)^{1/2}(\lambda-\Delta)^{-1/2}||_{\cB(L_p;L_p)}|f|_{H_p^{-1}(\rr^n)}\\
    &\le C|f|_{H_p^{-1}(\rr^n)},
    \end{align*}
and
    \begin{align*}
    |\nabla(\lambda-\Delta)^{-1}f|_{L_p(\rr^{n})}&\le C ||(I-\Delta)(\lambda-\Delta)^{-1}||_{\cB(L_p;L_p)}|f|_{H_p^{-1}(\rr^n)}\\
    &\le C|f|_{H_p^{-1}(\rr^n)},
    \end{align*}
since the norm
    $$||(I-\Delta)^{\alpha}(\lambda-\Delta)^{-\alpha}||_{\cB(L_p;L_p)},\quad \alpha\in\{1/2,1\},$$
is independent of $\lambda\ge 1$, which follows e.g.\ from
functional calculus. The shifted function $q_2=q-q_1$ should now
solve the reduced problem
    \begin{align}
    \begin{split}\label{trans2}
    \lambda q_2-\Delta q_2&=0,\quad x\in \dot{\mathbb{R}}^{n},\\
    [\![\rho q_2]\!]&=\tilde{g},\quad x\in\rr^{n-1},\\
    [\![\partial_\nu q_2]\!]&=h,\quad x\in\rr^{n-1},
    \end{split}
    \end{align}
with a modified function $\tilde{g}\in W_p^{2-1/p}(\rr^{n-1})$.
Let $x=(x',y)\in\rr^n\times\rr$ and define
$L:=(\lambda-\Delta_n)^{1/2}$, where $\Delta_n$ denotes the
Laplacian with respect to the first $n-1$ variables $x'$ and with
domain $D(L)=H_p^1(\rr^{n-1})$. Let furthermore
    $$\rho(x',y)=\rho_2\chi_{\{y>0\}}(x',y)+\rho_1\chi_{\{y<0\}}(x',y),\ (x',y)\in\rr^{n-1}\times\rr.$$
We make the following ansatz to
find a solution of \eqref{trans2}
    \begin{equation}\label{trans2.1}
    q_2(y):=
    \begin{cases}
    e^{-Ly}a_+,\ &y>0,\\
    e^{Ly}a_-,\ &y<0,
    \end{cases}
    \end{equation}
where $a_-,a_+$ have to be determined. The first transmission
condition in \eqref{trans2} yields
$\rho_2a_+-\rho_1a_-=\tilde{g}$, whereas the second condition
implies $-L(a_+ + a_-)=h$, hence $a_+ +a_-=-L^{-1}h$. Observe
that $\tilde{g},L^{-1}h\in W_p^{2-1/p}(\rr^{n-1})$. Therefore we
may solve this linear system of equations to the result
    \begin{equation}\label{trans2.2}
    a_-=-\frac{1}{\rho_1+\rho_2}\left(\tilde{g}+\rho_2
    L^{-1}h\right),\ a_+=\frac{1}{\rho_1+\rho_2}\left(\tilde{g}+\rho_2
    L^{-1}h\right)-L^{-1}h.
    \end{equation}
In other words, the solution of \eqref{trans2} (hence of
\eqref{trans1}) is uniquely determined and $a_-,a_+\in
W_p^{2-1/p}(\rr^{n-1})$. Since $|Le^{\pm
L\cdot}\cdot|_{L_p(\rr^{n-1}\times\rr_\mp)}$ is an equivalent
norm in $W_p^{1-1/p}(\rr^{n-1})$ and the corresponding constants
are independent of $\lambda\ge 1$, we obtain first
    $$\lambda^{1/2}|q_2|_{L_p(\rr^{n})}=\lambda^{1/2}|L^{-1}Lq_2|_{L_p(\rr^{n})}
    \le C\left(|a_+|_{W_p^{1-1/p}(\rr^{n-1})}+|a_-|_{W_p^{1-1/p}(\rr^{n-1})}\right).$$
Concerning $\nabla q_2$ in $L_p(\rr^{n})$, we estimate as follows
    \begin{align*}
    |\nabla_{x'} q_2|_{L_p(\rr^n)}&\le C|L_0 q_2|_{L_p(\rr^n)}=C|L_0L^{-1}L q_2|_{L_p(\rr^n)}\\
    &\le C||L_0L^{-1}||_{\cB(L_p,L_p)}|Lq_2|_{L_p(\rr^n)}\\
    &\le C\left(|a_+|_{W_p^{1-1/p}(\rr^{n-1})}+|a_-|_{W_p^{1-1/p}(\rr^{n-1})}\right),
    \end{align*}
with $L_0:=(I-\Delta_{x'})^{1/2}$. Here the norm
$||L_0L^{-1}||_{\cB(L_p,L_p)}$ does not depend on $\lambda\ge 1$,
which is a consequence of the functional calculus. The estimate
for $\partial_yq_2$ in $L_p(\rr^{n})$ is even simpler, since
    $$|\partial_y q_2|_{L_p(\rr^n)}=|Lq_2|_{L_p(\rr^n)}\le C\left(|a_+|_{W_p^{1-1/p}(\rr^{n-1})}+|a_-|_{W_p^{1-1/p}(\rr^{n-1})}\right).$$
This yields the estimate
    $$\lambda^{1/2}|q_2|_{L_p(\rr^{n})}+|\nabla q_2|_{L_p(\rr^n)}\le C\left(|\tilde{g}|_{W_p^{1-1/p}(\rr^{n-1})}+|L^{-1}h|_{W_p^{1-1/p}(\rr^{n-1})}\right).$$
For each fixed $\lambda\ge 1$ the operator $L^{-1}$ is bounded
and linear from $W_p^{-1/p}(\rr^{n-1})$ to
$W_p^{1-1/p}(\rr^{n-1})$, where $W_p^{-1/p}(\rr^{n-1})$ is the
topological dual space of $W_{p'}^{1/p}(\rr^{n-1})$, and
$1/p+1/p'=1$. We want to show that the bound of $L^{-1}$ is
independent of $\lambda\ge 1$. This can be seen as follows. We
have
    \begin{align*}
    |L^{-1}h|_{W_p^{1}(\rr^{n-1})}\le C|L_0L^{-1}h|_{L_p(\rr^{n-1})}\le C||L_0L^{-1}||_{\cB(L_p,L_p)}|h|_{L_p(\rr^{n-1})}
    \end{align*}
which holds for all $h\in L_p(\rr^{n-1})$, since
$|L_0\cdot|_{L_p(\rr^{n-1})}$ is an equivalent norm in
$W_p^1(\rr^{n-1})$. On the other hand we have
    \begin{align*}
    |L^{-1}h|_{L_p(\rr^{n-1})}&=|L_0L_0^{-1}L^{-1}h|_{L_p(\rr^{n-1})}=|L_0L^{-1}L_0^{-1}h|_{L_p(\rr^{n-1})}\\
    &\le ||L_0L^{-1}||_{\cB(L_p,L_p)}|L_0^{-1}h|_{L_p(\rr^{n-1})}\\
    &\le C||L_0L^{-1}||_{\cB(L_p,L_p)}|h|_{W_p^{-1}(\rr^{n-1})}
    \end{align*}
for all $h\in W_p^{-1}(\rr^{n-1})$, since
$|L_0^{-1}\cdot|_{L_p(\rr^{n-1})}$ is an equivalent norm in
$W_p^{-1}(\rr^{n-1})$ and since $L^{-1}$ and $L_0^{-1}$ are
commuting operators. Finally we apply the real interpolation
method to obtain
    \begin{align*}
    |L^{-1}h|_{W_p^{1-1/p}(\rr^{n-1})}\le C|h|_{W_p^{-1/p}(\rr^{n-1})},
    \end{align*}
for all $h\in W_p^{-1/p}(\rr^{n-1})$, where the constant $C>0$ is
independent of $\lambda\ge 1$. In summary we derived the a priori
estimate
    $$\lambda^{1/2}|q_2|_{L_p(\rr^n)}+|\nabla q_2|_{L_p(\mathbb{R}^{n})}\le
C\left(|\tilde{g}|_{W_p^{1-1/p}(\rr^{n-1})}+|h|_{W_p^{-1/p}(\rr^{n-1})}\right),$$
for the solution of \eqref{trans2}, hence
    \begin{multline}\label{trans2a}
    \lambda^{1/2}|q|_{L_p(\rr^n)}+|\nabla q|_{L_p(\mathbb{R}^{n})}\\
    \le C\left(|f|_{H_p^{-1}(\rr^{n})}+|g|_{W_p^{1-1/p}(\rr^{n-1})}+|h|_{W_p^{-1/p}(\rr^{n-1})}\right)
    \end{multline}
for the solution of \eqref{trans1}, since
    \begin{align*}
    |\tilde{g}|_{W_p^{1-1/p}(\rr^{n-1})}&\le |g|_{W_p^{1-1/p}(\rr^{n-1})}+|[\![\rho q_1]\!]|_{W_p^{1-1/p}(\rr^{n-1})}\\
    &\le |g|_{W_p^{1-1/p}(\rr^{n-1})}+C|f|_{H_p^{-1}(\rr^n)},
    \end{align*}
by \eqref{transq1}. Consider now a bounded domain
$\Omega\subset\rr^{n}$ with $\partial\Omega\in C^2$ and let
$\Gamma\subset\Omega$ be a hypersurface such that $\Gamma\in
C^2$, $\Gamma\cap\partial\Omega=\emptyset$ and such that $\Gamma$
divides the set $\Omega$ into two disjoint regions
$\Omega_1,\Omega_2$, where $\partial\Omega_1=\Gamma$ and
$\partial\Omega_2=\partial\Omega\cup\Gamma$. Since $\bar{\Omega}$
is compact, we may cover it by a union of finitely many open sets
$U_k,\ k=0,\ldots,N$ which are subject to the following conditions
    \begin{itemize}
        \item $\partial\Omega\subset U_0$ and $U_0\cap\Gamma=\emptyset$;
        \item $U_1\subset\Omega_1$ and $U_1\cap\Gamma=\emptyset$;
        \item $U_k\cap\Gamma\neq\emptyset$,
$U_k\cap\partial\Omega=\emptyset\ k=2,\ldots,N$ and
            $$\bigcup_{k=2}^N U_k\supset \Gamma.$$
    \end{itemize}
For $k\ge 2$, the sets $U_k$ may be balls with a fixed but
arbitrarily small radius $r>0$. Let $\{\varphi_k\}_{k=0}^N$ be a
partition of unity, such that ${\rm supp}\,\varphi_k\subset U_k$
and $0\le\varphi_k(x)\le 1$ for all $x\in\bar{\Omega}$. Consider
the transmission problem
    \begin{align}
    \begin{split}\label{trans3}
    \lambda q-\Delta q&=0,\quad x\in \Omega\backslash\Gamma\\
    [\![\rho q]\!]&=g,\quad x\in\Gamma,\\
    [\![\partial_{\nu_\Gamma} q]\!]&=0,\quad x\in\Gamma,\\
    \partial_\nu q&=0,\quad x\in\partial\Omega,
    \end{split}
    \end{align}
where $g\in W_p^{2-1/p}(\Gamma)$. Set $q_k=q\varphi_k$ and
$g_k=g\varphi_k$. By Theorem \ref{thmtrans0} there exists a unique
solution $q\in H_p^2(\Omega\backslash\Gamma)$ of \eqref{trans3},
if e.g.\ $\lambda\ge 1$. Multiplying
\eqref{trans3} by $\varphi_0$ yields
    \begin{align}
    \begin{split}\label{trans4}
    \lambda q_0-\Delta q_0&=-2(\nabla
q|\nabla\varphi_0)-q\Delta\varphi_0,\quad x\in \Omega,\\
    \partial_\nu q_0&=q\partial_\nu\varphi_0,\quad x\in\partial\Omega,
    \end{split}
    \end{align}
which is an elliptic boundary value problem in $\Omega$. Denote by
$(F_0,G_0)$ the right hand side of \eqref{trans4}. By
\cite[Theorem 3.3.4]{Tri83}, there exists a common bounded
extension operator $E$ from $L_p(\Omega)$ resp.\
$H_p^{-1}(\Omega)$ to $L_p(\rr^{n})$ resp.\ $H_p^{-1}(\rr^{n})$.
Solve the equation
    $$\lambda q_0^1-\Delta q_0^1=E F_0,\quad x\in\rr^{n}.$$
The solution is given by $q_0^1=(\lambda-\Delta)^{-1}EF_0$ and we
have the estimate
    $$\lambda^{1/2}|q_0^1|_{L_p(\rr^n)}+|\nabla q_0^1|_{L_p(\rr^{n})}\le C|EF_0|_{H_p^{-1}(\rr^{n})}\le
C|F_0|_{H_p^{-1}(\Omega)}\le C|q|_{L_p(\Omega)},$$ as we have already shown. Note that since $F_0\in L_p(\Omega)$, it holds that
    $$|q_0^1|_{H_p^2(\rr^n)}=|(\lambda-\Delta)^{-1}EF_0|_{H_p^2(\rr^n)}\le C|F_0|_{L_p(\Omega)}\le C|q|_{H_p^1(\Omega)},$$
and $C>0$ does not depend on $\lambda\ge 1$. In particular, the real interpolation method yields
    $$|q_0^1|_{W_p^{1+s}(\rr^n)}\le C|q|_{W_p^s(\Omega)},\ s\in[0,1].$$
The shifted function $q_0^2=q_0-q_0^1$ solves the problem
    \begin{align}
    \begin{split}\label{trans4a}
    \lambda q_0^2-\Delta q_0^2&=0,\quad x\in \Omega,\\
    \partial_\nu q_0^2&=G_0^2,\quad x\in\partial\Omega,
    \end{split}
    \end{align}
with some modified function $G_0^2\in
W_p^{1-1/p}(\partial\Omega)$. By \cite[Theorem 9.2]{Ama93}, there
exists a bounded solution operator
$S_0^2:W_p^{-1/p}(\partial\Omega)\to H_p^1(\Omega)$ such that
$q_0^2=S_0^2G_0^2$ and there exists a constant $C>0$ being
independent of $\lambda\ge 1$ such that
    $$\lambda^{1/2}|q_0^2|_{L_p(\Omega)}+|\nabla q_0^2|_{L_p(\Omega)}\le C|G_0^2|_{W_p^{-1/p}(\partial\Omega)}.$$
This yields
    \begin{multline*}
    \lambda^{1/2}|q_0|_{L_p(\Omega)}+|\nabla q_0|_{L_p(\Omega)}\le C(|(\nabla
q|\nabla\varphi_0)|_{H_p^{-1}(\Omega)}+|q\Delta\varphi_0|_{H_p^{-1}(\Omega)}\\
    +|q\partial_\nu\varphi_0|_{W_p^{-1/p}(\partial\Omega)}+|\partial_\nu
q_0^1|_{W_p^{-1/p}(\partial\Omega)}).
    \end{multline*}
Since $\varphi_0$ is smooth and compactly supported and since
$\nu\in C^1(\partial\Omega)$, we have
    \begin{equation}\label{trans5}
    \lambda^{1/2}|q_0|_{L_p(\Omega)}+|\nabla q_0|_{L_p(\Omega)}\le C|q|_{W_p^s(\Omega)},
    \end{equation}
for some $s\in (1/p,1)$, since
    $$|q|_{W_p^{-1/p}(\partial\Omega)}\le C|q|_{L_p(\partial\Omega)}\le
C|q|_{W_p^s(\Omega)},\quad s\in (1/p,1),$$
and
    $$|\partial_\nu q_0^1|_{W_p^{-1/p}(\partial\Omega)}\le C|\partial_\nu q_0^1|_{L_p(\partial\Omega)}\le C|q_0^1|_{W_p^{1+s}(\Omega)}\le C|q|_{W_p^s(\Omega)}.$$
In a next step we multiply \eqref{trans3} by $\varphi_1$ to obtain the full space
problem
    \begin{equation}\label{trans6}
    \lambda q_1-\Delta q_1=-2(\nabla
q|\nabla\varphi_1)-q\Delta\varphi_1,\quad x\in \rr^{n}.
    \end{equation}
This problem admits a unique solution
$q_1=(\lambda-\Delta)^{-1}F_1$, provided $\lambda\ge 1$, where
$S_1=(\lambda-\Delta)^{-1}:H_p^{-1}(\rr^{n})\to H_p^1(\rr^{n})$ is
bounded and $F_1$ denotes the right hand side of \eqref{trans6}.
As before we obtain the estimate
    \begin{equation}\label{trans7}
    \lambda^{1/2}|q_1|_{L_p(\rr^n)}+|\nabla q_1|_{L_p(\rr^n)}\le C|q|_{L_p(\Omega)},
    \end{equation}
with $C>0$ being independent of $\lambda\ge 1$.

We turn now to the charts $U_k$, $k=2,\ldots,N$. Multiplying
\eqref{trans3} by $\varphi_k$, $k=2,\ldots,N$, we obtain the pure
transmission problem
    \begin{align}
    \begin{split}\label{trans8}
    \lambda q_k-\Delta q_k&=-2(\nabla
q|\nabla\varphi_k)-q\Delta\varphi_k,\quad x\in \rr^n\backslash\Gamma,\\
    [\![\rho q_k]\!]&=g_k,\quad x\in\Gamma,\\
    [\![\partial_\nu q_k]\!]&=[\![q]\!]\partial_\nu\varphi_k,\quad
x\in\Gamma.
    \end{split}
    \end{align}
Let $x_0\in\Gamma$. Then there exists $k\in \{2,\ldots,N\}$ such
that $x_0\in U_{k}$. After a translation and a rotation of
coordinates we may assume that $x_0=0$ and that the normal
$\nu(x_0)$ at $x_0$ which points from $\Omega_1$ to $\Omega_2$ is
given by $\nu(x_0)=[0,\ldots,0,-1]^{\sf T}$. Consider a graph
$\eta\in C^2(\rr^{n-1})$ with compact support such that
    $$\{(x',x_{n})\in
U_{k}\subset\rr^{n-1}\times\rr:x_{n}=\eta(x')\}=\Gamma\cap
U_{k}.$$ Note that, since $\nabla_{x'}\eta(0)=0$, we may choose
$|\nabla_{x'}\eta|_\infty$ as small as we wish, by decreasing the
size of the chart $U_{k}$. Let
$q(x',x_{n})=v(x',x_{n}-\eta(x'))$, where $(x',x_{n})\in U_{k}$.
We define a new coordinate by $y=x_{n}-\eta(x')$, $(x',x_{n})\in
U_{k}$. Then we obtain
    \begin{multline*}
    \Delta q(x',x_{n})=\Delta_y
v(x',y)-2\partial_y\left(\nabla_{x'}v(x',y)|\nabla_{x'}\eta(x')\right)\\
    +\partial_y^2v(x',y)|\nabla_{x'}\eta|^2-\partial_y
v(x',y)\Delta_{x'}\eta(x')
    \end{multline*}
and
    $$[\![\partial_\nu q]\!]=-\sqrt{1+|\nabla_{x'}\eta|^2}[\![\partial_y
v]\!]+\frac{1}{\sqrt{1+|\nabla_{x'}\eta|^2}}\left([\![\nabla_{x'}v]\!]|\nabla_{x'}\eta\right),$$
since the normal at $x\in U_{k}\cap \Gamma$ is given by

$$\nu(x',\eta(x'))=\frac{1}{\sqrt{1+|\nabla_{x'}\eta|^2}}[(\nabla_{x'}\eta)^{\sf
T},-1]^{\sf T}.$$ Let $(\Theta
u)(x',y):=q(x',y+\eta(x'))=v(x',y)$ with inverse $(\Theta^{-1}
v)(x',x_{n+1})=v(x',x_{n+1}-\eta(x'))=q(x',x_{n+1})$. Applying
the $C^2$-diffeomorphism $\Theta$ to \eqref{trans8} and
considering the terms on the right hand side of \eqref{trans8}
which depend on $u$ as given functions $(f_k,g_k,h_k)$ yields the
problem
    \begin{align}
    \begin{split}\label{trans9}
    \lambda v_k-\Delta_y v_k&=F(f_k,v_k,\varphi_k,\eta),\quad
(x',y)\in\dot{\mathbb{R}}^{n},\\
    [\![\rho v_k]\!]&=G(g_k),\quad x'\in\mathbb{R}^{n-1},\ y=0,\\
    [\![\partial_y v_k]\!]&=H(v_k,\varphi_k,\eta),\quad
x'\in\mathbb{R}^{n-1},\ y=0.
    \end{split}
    \end{align}
which is of the form \eqref{trans1}. Here
    \begin{equation*}
    F(f_k,v_k,\varphi_k,\eta):=-2\partial_y\left(\nabla_{x'}v_k|\nabla_{x'}\eta\right)
    +\partial_y^2v_k|\nabla_{x'}\eta|^2-\partial_y v_k\Delta_{x'}\eta,
    \end{equation*}
$G(g_k):=\Theta g_k$ and
    \begin{equation*}
    H(h_k,v_k,\varphi_k,\eta):=\frac{1}{1+|\nabla_{x'}\eta|^2}
    \left([\![\nabla_{x'}v_k]\!]|\nabla_{x'}\eta\right)
    \end{equation*}
We want to apply \eqref{trans2a} to \eqref{trans9} and estimate
as follows.
    \begin{align*}
    |\partial_y\left(\nabla_{x'}v_k|\nabla_{x'}\eta\right)|_{W_p^{-1}(\rr^{n})}&\le
    C|(I-\Delta_y)^{-1/2}\partial_y\left(\nabla_{x'}v_k|\nabla_{x'}\eta\right)|_{L_p(\rr^{n})}\\
    &\le C|\left(\nabla_{x'}v_k|\nabla_{x'}\eta\right)|_{L_p(\rr^{n})}\\
    &\le C|\nabla_{x'}\eta|_\infty|v_k|_{W_p^1(\dot{\rr}^{n})}.
    \end{align*}
In the same way we obtain
    $$|\partial_y^2v_k|\nabla_{x'}\eta|^2|_{W_p^{-1}(\rr^{n})}\le
C|\nabla_{x'}\eta|_\infty^2|v_k|_{W_p^1(\dot{\rr}^{n})},$$ whereas
    $$|\partial_y v_k\Delta_{x'}\eta|_{W_p^{-1}(\rr^{n})}\le C
|v_k|_{L_p(\rr^{n})},$$ since $\eta$ is smooth. Concerning the
terms in the Neumann transmission condition, we obtain by trace
theory
    \begin{align*}
    |\left([\![\nabla_{x'}v_k]\!]|\nabla_{x'}\eta\right)|_{W_p^{-1/p}(\rr^{n-1})}&\le
    C|\nabla_{x'}\eta|_{C^\alpha(\rr^{n-1})}|\nabla_{x'}v_k|_{W_p^{-1/p}(\rr^{n-1})}\\
    &\le C|\nabla_{x'}\eta|_{C^\alpha(\rr^{n-1})}|v_k|_{W_p^{1-1/p}(\rr^{n-1})}\\
    &\le C|\nabla_{x'}\eta|_{C^\alpha(\rr^{n-1})}|v_k|_{W_p^{1}(\dot{\rr}^{n})},
    \end{align*}
where $\alpha\in (1/p,1)$. These estimates show that the right
hand side of \eqref{trans9} may be estimated by terms that are
either of lower order or of highest order, but the higher order
terms carry a factor of the form
$|\nabla_{x'}\eta|_\infty^\theta$, $\theta>0$, which becomes
small, by decreasing the size of the chart $U_k$. Applying
perturbation theory it follows that there exists $\lambda_0\ge 1$
such that for each chart $U_k$, $k=2,\ldots,N$, the linear
problem \eqref{trans9} has a bounded solution operator
    $$S_k:W_p^{-1}(\rr^{n})\times W_p^{1-1/p}(\rr^{n-1})\times
W_p^{-1/p}(\rr^{n-1})\to W_p^1(\dot{\rr}^{n}),$$ provided
$\lambda\ge \lambda_0$. This in turn yields that
$\Theta^{-1}S_k\Theta$ is the corresponding solution operator for
problem \eqref{trans8}, i.e. we have
    $$q_k=(\Theta^{-1}S_k\Theta)(F_k,G_k,H_k),$$
for each $k=2,\ldots,N$, where $(F_k,G_k,H_k)$ denotes the right
hand side of \eqref{trans8}. Since $\Theta$ is a
$C^2$-diffeomorphism, we obtain the estimate
    \begin{equation}\label{trans10}
    \lambda^{1/2}|q_k|_{L_p(\Omega)}+|\nabla q_k|_{L_p(\Omega)}\le
C\left(|g|_{W_p^{1-1/p}(\Gamma)}+|q|_{W_p^s(\Omega\backslash\Gamma)}\right),
    \end{equation}
for some $s\in (1/p,1)$ and for each $k=2,\ldots,N$. Here the
constant $C>0$ does not depend on $\lambda\ge \lambda_0$, as we
have already shown in the investigation of \eqref{trans1}. Let us
introduce
$$|v|_{\lambda,W_p^1(\Omega)}:=|\lambda|^{1/2}|v|_{L_p(\Omega)}+|\nabla
v|_{L_p(\Omega)},\quad \lambda\ge 1,\ v\in
W_p^1(\Omega\backslash\Gamma),$$ which is an equivalent norm in
$W_p^1(\Omega\backslash\Gamma)$. This yields
    \begin{align*}
    |q|_{\lambda,W_p^1(\Omega)}\le\sum_{k=0}^N
|q_k|_{\lambda,W_p^1(\Omega)}\le
C\Big(|g|_{W_p^{1-1/p}(\Gamma)}+|q|_{W_p^s(\Omega)}\Big).
    \end{align*}
with constants $C,M>0$, being independent of $\lambda$. Since
$s\in (1/p,1)$ we may apply interpolation theory to the result
    \begin{align*}
    |q|_{W_p^s(\Omega)}&\le \varepsilon
|q|_{W_p^1(\Omega)}+C(\varepsilon)|q|_{L_p(\Omega)}\\
&\le\varepsilon
|q|_{\lambda,W_p^1(\Omega)}+C(\varepsilon)|q|_{L_p(\Omega)}\\
&\le\left(\varepsilon+C(\varepsilon)/|\lambda|^{1/2}\right)
|q|_{\lambda,W_p^1(\Omega)},
    \end{align*}
since by assumption $\lambda\ge 1$. Choosing first
$\varepsilon>0$ small enough and then $\lambda\ge 1$ sufficiently
large, we finally obtain the estimate
    \begin{equation}\label{trans11}
    |q|_{W_p^1(\Omega)}\le C|g|_{W_p^{1-1/p}(\Gamma)}
    \end{equation}
for the strong solution $q\in W_p^2(\Omega\backslash\Gamma)$ of
\eqref{trans3}. Now we want to reduce the regularity of $g$. Fix
$g\in W_p^{1-1/p}(\Gamma)$. Then there exists a sequence
$(g_m)\subset W_p^{2-1/p}(\Gamma)$, such that $g_m\to g$ as $m\to
\infty$ in $W_p^{1-1/p}(\Gamma)$. We denote by $q_m\in
W_p^2(\Omega\backslash\Gamma)$ the corresponding solutions of
\eqref{trans3}. Then it follows from \eqref{trans11} that $(q_m)$
is a Cauchy sequence in $W_p^{1}(\Omega\backslash\Gamma)$.
Therefore the limit $\lim_{m\to\infty} q_m=:q_\infty$ exists and
$q_\infty\in W_p^{1}(\Omega\backslash\Gamma)$ is the unique weak
solution of \eqref{trans3} for sufficiently large $\lambda\ge 1$.
\begin{lemma}\label{translem1}
Let $1<p<\infty$, $1/p+1/p'=1$ and let $g\in W_p^{1-1/p}(\Gamma)$
be given. Then there exists $\lambda_0\ge 1$ such that the problem
    \begin{align*}
    \lambda (q|\phi)_{L_2(\Omega)}+(\nabla
q|\nabla\phi)_{L_2(\Omega)}&=0,\quad \phi\in
H_{p'}^1(\Omega),\\
    [\![\rho q]\!]&=g,\quad x\in\Gamma,
    \end{align*}
has a unique solution $q\in H_p^1(\Omega\backslash\Gamma)$,
provided $\lambda\ge \lambda_0$. Moreover, the solution $q\in
H_p^1(\Omega\backslash\Gamma)$ satisfies the estimate
    \begin{equation}\label{estweaksol}
    |q|_{H_p^{1}(\Omega)}\le C|g|_{W_p^{1-1/p}(\Gamma)}.
    \end{equation}
\end{lemma}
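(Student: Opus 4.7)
The plan is to realize the weak solution as the $H_p^1$-limit of the strong solutions from Theorem~\ref{thmtrans0}, exploiting the a priori estimate \eqref{trans11} just established in the preceding paragraphs. That estimate does all the heavy lifting; once it is available with constant uniform in $\lambda \geq \lambda_0$, the lemma reduces to a density argument for existence and a regularity bootstrap for uniqueness.

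For $g \in W_p^{2-1/p}(\Gamma)$, Theorem~\ref{thmtrans0} supplies a unique strong solution $q \in H_p^2(\Omega \backslash \Gamma)$ of \eqref{trans3}. Integrating by parts against any $\phi \in H_{p'}^1(\Omega)$ gives
$$\lambda (q|\phi)_{L_2(\Omega)} + (\nabla q|\nabla \phi)_{L_2(\Omega)} = \int_{\partial\Omega} (\partial_\nu q) \phi \, dS - \int_\Gamma [\![\partial_{\nu_\Gamma} q]\!] \phi \, d\Gamma = 0,$$
since both surface integrals vanish by the homogeneous data in \eqref{trans3}; that is, every such strong solution is automatically a weak solution of the system in the lemma, and \eqref{trans11} yields $|q|_{H_p^1(\Omega)} \leq C |g|_{W_p^{1-1/p}(\Gamma)}$ with $C$ independent of $\lambda \geq \lambda_0$.

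For general $g \in W_p^{1-1/p}(\Gamma)$, I would pick $g_m \in W_p^{2-1/p}(\Gamma)$ with $g_m \to g$ in $W_p^{1-1/p}(\Gamma)$ and let $q_m$ be the corresponding strong solutions. Linearity of \eqref{trans3} together with \eqref{trans11} shows that $(q_m)$ is Cauchy in $H_p^1(\Omega \backslash \Gamma)$, so it converges to some $q$ which inherits the bound \eqref{estweaksol}. Passing to the limit in the weak identity above and in the jump relation $[\![\rho q_m]\!] = g_m$, using continuity of the trace $H_p^1(\Omega \backslash \Gamma) \to W_p^{1-1/p}(\Gamma)$, produces the sought weak solution with $[\![\rho q]\!] = g$.

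For uniqueness, I would assume $q \in H_p^1(\Omega \backslash \Gamma)$ solves the weak problem with $g = 0$. Testing against $\phi \in C_c^\infty(\Omega_j)$ forces $\lambda q - \Delta q = 0$ distributionally in each phase, so interior elliptic regularity gives $q \in H_{p,\loc}^2(\Omega_j)$. Varying $\phi$ over all of $H_{p'}^1(\Omega)$ and integrating by parts then produces the Neumann condition $\partial_\nu q = 0$ on $\partial\Omega$ and the transmission condition $[\![\partial_{\nu_\Gamma} q]\!] = 0$ on $\Gamma$ in the appropriate weak sense; standard boundary regularity upgrades $q$ to a strong solution in $H_p^2(\Omega \backslash \Gamma)$ of \eqref{trans3} with all data equal to zero, whereupon the uniqueness part of Theorem~\ref{thmtrans0} forces $q = 0$. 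The main burden of the whole proof was already carried by \eqref{trans11}; the only mild subtlety left is to confirm that the limit procedure respects the weak transmission condition, which is controlled by continuity of traces.
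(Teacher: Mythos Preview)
Your proposal is correct and follows essentially the same route as the paper: the argument immediately preceding the lemma already establishes \eqref{trans11} for strong solutions via localization, then approximates $g\in W_p^{1-1/p}(\Gamma)$ by smoother data and passes to the limit in $H_p^1(\Omega\backslash\Gamma)$ exactly as you describe. Your uniqueness paragraph actually supplies more detail than the paper does---the paper simply asserts that the limit ``is the unique weak solution'' without spelling out the regularity bootstrap---so your account is a faithful and slightly more complete rendering of the same proof.
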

In a next step we consider the problem
    \begin{align}
    \begin{split}\label{trans12}
    \lambda (q|\phi)_{L_2(\Omega)}+(\nabla
q|\nabla\phi)_{L_2(\Omega)}
    &=(f|\nabla\phi)_{L_2(\Omega)},\quad \phi\in
H_{p'}^1(\Omega),\\
    [\![\rho q]\!]&=0,\quad x\in\Gamma,
    \end{split}
    \end{align}
where $f\in L_p(\Omega;\rr^n)$ is given. Observe that the mapping
$\psi_f:H_{p'}^1(\Omega\backslash\Gamma)\to \rr$ defined by
    $$\psi_f(\phi):=\langle
\psi_f,\phi\rangle:=\int_{\Omega}(f|\nabla\phi) dx,$$ is linear
and continuous, since
    $$|\psi_f(\phi)|\le
|f|_{L_p(\Omega;\rr^n)}|\phi|_{H_{p'}^1(\Omega)},$$ hence
$\psi_f\in \left(H_{p'}^1(\Omega\backslash\Gamma)\right)^*$. With
the help of the Dirichlet form
    $$a:H_p^1(\Omega\backslash\Gamma)\times
H_{p'}^1(\Omega\backslash\Gamma)\to\rr,\quad
a(q,v):=\int_{\Omega}\nabla q\cdot\nabla v dx,$$ we define an
operator $A:H_p^1(\Omega\backslash\Gamma)\to
\left(H_{p'}^1(\Omega\backslash\Gamma)\right)^*$ by means of
    $$\langle Aq,v\rangle:=a(q,v),$$
with domain $D(A)=\{q\in H_p^1(\Omega\backslash\Gamma):[\![\rho
q]\!]=0\ \mbox{on}\ \Gamma\}$. Making use of these definitions,
we may rewrite \eqref{trans12} in the abstract form
    \begin{equation}\label{trans13}
    \lambda q+Aq=\psi_f,\quad \mbox{in}\
\left(H_{p'}^1(\Omega\backslash\Gamma)\right)^*.
    \end{equation}
Since
    $$H_p^1(\Omega\backslash\Gamma)\compemb
\left(H_{p'}^1(\Omega\backslash\Gamma)\right)^*,$$ the resolvent
of $A$ is compact and therefore the spectral set $\sigma(A)$
consists solely of a countably infinite sequence of isolated
eigenvalues. By a bootstrap argument it is easily seen that the
corresponding eigenfunctions are smooth. Hence, defining $A_2$ to
be the part of $A$ in $L_2(\Omega\backslash\Gamma)$ with domain
$D(A_2)=\{q\in D(A):Au\in L_2(\Omega\backslash\Gamma)\}$, it
follows that $\sigma(A)=\sigma(A_2)$. Integrating by parts, we
obtain
    $$D(A_2)=\{q\in H_2^2(\Omega\backslash\Gamma):[\![\rho q]\!]=0,\
[\![\partial_{\nu_\Gamma} q]\!]=0\ \mbox{on}\ \Gamma,\
\partial_\nu q=0\ \mbox{on}\ \partial\Omega\}$$ and $A_2
q=-\Delta q$ in $\Omega\backslash\Gamma$. Let
$\lambda\in\sigma(-A)=\sigma(-A_2)$ and let $q\in D(A_2)$ be a
corresponding eigenfunction. Then $q$ satisfies the problem
    \begin{align}
    \begin{split}\label{trans14}
    \lambda q-\Delta q&=0,\quad x\in\Omega\backslash\Gamma,\\
    [\![\rho q]\!]&=0,\quad x\in\Gamma,\\
    [\![\partial_{\nu_\Gamma} q]\!]&=0,\quad x\in\Gamma,\\
    \partial_\nu q&=0,\quad x\in\partial\Omega.
    \end{split}
    \end{align}
Multiplying $\eqref{trans14}_1$ by $\rho q$ and integrating by
parts, we obtain by $\eqref{trans14}_{2,3,4}$
    \begin{align*}
    -\lambda\int_{\Omega\backslash\Gamma}\rho |q|^2dx&=-\int_{\Omega\backslash\Gamma}\rho q \Delta q
dx=-\rho_1\int_{\Omega_1} q_1 \Delta q_1
dx-\rho_2\int_{\Omega_2} q_2
\Delta q_2 dx\\
    &=\rho_1|\nabla q_1|_2^2+\rho_2|\nabla q_2|_2^2+\int_\Gamma
\left(\partial_{\nu_\Gamma} q_2 \rho_2q_2-\partial_{\nu_\Gamma}
q_1
\rho_1q_1\right) d\Gamma\\
    &=\rho_1|\nabla q_1|_2^2+\rho_2|\nabla q_2|_2^2+\int_\Gamma
\partial_{\nu_\Gamma} q_2 [\![\rho q]\!] d\Gamma\\
    &=\rho_1|\nabla q_1|_2^2+\rho_2|\nabla q_2|_2^2\ge 0,
    \end{align*}
where $q_j$ denotes the part of $q$ in $\Omega_j$. In particular
it follows that $\lambda$ is real and $\lambda\le 0$ for all $\lambda\in\sigma
(-A)$ and if $\lambda =0$ then $q_1$ and $q_2$ are both equal to
a constant in $\Omega_1$ and $\Omega_2$, respectively, satisfying
the identity $\rho_1 q_1=\rho_2 q_2$. In  other words the
eigenvalue $\lambda=0$ is simple and the kernel $N(A)=N(A_2)$ is
given by
    $$N(A)=\K\mathds{1}_\rho,\quad
\mathds{1}_\rho(x):=\chi_{\Omega_1}(x)+\frac{\rho_1}{\rho_2}\chi_{\Omega_2}(x),\
x\in\Omega.$$ Therefore, spectral theory implies
$\left(H_{p'}^1(\Omega\backslash\Gamma)\right)^*=N(A)\oplus R(A)$
and $H_p^1(\Omega\backslash\Gamma)=N(A)\oplus Y$, where $Y$ is a
closed subspace of $H_p^1(\Omega\backslash\Gamma)$. Note that
these decompositions reduce the linear operator $A$. It follows
that the equation $Aq=F$ has a unique solution $q\in Y\subset
H_p^1(\Omega\backslash\Gamma)$ if and only if $F\in R(A)$, or
equivalently $\langle F,\mathds{1}_\rho\rangle=0$. If $c\in\K$,
then \emph{any} other solution $\tilde{q}\in
H_p^1(\Omega\backslash\Gamma)$ of $A\tilde{q}=F$ is given by
$\tilde{q}=q+c\mathds{1}_\rho$ and we have the estimate
    $$|\nabla \tilde{q}|_{L_p(\Omega)}\le
C|F|_{\left(H_{p'}^1(\Omega\backslash\Gamma)\right)^*}\ .$$
\begin{lemma}\label{translem2}
Let $1<p<\infty$, $1/p+1/p'=1$ and let $f\in L_p(\Omega;\rr^n)$
be given. Then the problem
    \begin{align*}
    (\nabla q|\nabla\phi)_{L_2(\Omega)}&=(f|\nabla\phi)_{L_2(\Omega)},\quad
\phi\in H_{p'}^1(\Omega),\\
    [\![\rho q]\!]&=0,\quad x\in\Gamma,
    \end{align*}
has a unique solution $q\in \dot{H}_p^1(\Omega\backslash\Gamma)$,
satisfying the estimate
    $$|\nabla q|_{L_p(\Omega)}\le
C|f|_{L_p(\Omega;\rr^n)}\ .$$
\end{lemma}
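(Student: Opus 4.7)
The proof is essentially a direct application of the spectral framework established in the paragraph immediately preceding the statement of the lemma, so the plan is more a matter of fitting the hypotheses into that framework than of developing new machinery.

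First, I would reinterpret the problem in functional-analytic form. Define the linear functional $\psi_f$ on $H^1_{p'}(\Omega\backslash\Gamma)$ by
$$\langle \psi_f,\phi\rangle := (f|\nabla \phi)_{L_2(\Omega)}, \quad \phi\in H^1_{p'}(\Omega\backslash\Gamma),$$
and observe, by H\"older's inequality, that
$$|\langle \psi_f,\phi\rangle| \le |f|_{L_p(\Omega;\R^n)}\,|\nabla\phi|_{L_{p'}(\Omega)} \le |f|_{L_p(\Omega;\R^n)}\,|\phi|_{H^1_{p'}(\Omega\backslash\Gamma)},$$
so that $\psi_f\in \bigl(H^1_{p'}(\Omega\backslash\Gamma)\bigr)^*$ with $|\psi_f|_{(H^1_{p'})^*}\le |f|_{L_p(\Omega;\R^n)}$. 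The weak problem then reads $Aq=\psi_f$, where $A$ is the operator introduced just above the lemma.

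Second, I would verify the one compatibility condition needed for solvability, namely $\psi_f\in R(A)$. According to the spectral analysis carried out above, $R(A)$ is characterised by $\langle F,\mathds{1}_\rho\rangle = 0$, where $\mathds{1}_\rho = \chi_{\Omega_1} + (\rho_1/\rho_2)\chi_{\Omega_2}$ spans $N(A)=N(A_2)$. But $\mathds{1}_\rho$ is constant on each of the sets $\Omega_1$ and $\Omega_2$, hence $\nabla\mathds{1}_\rho\equiv 0$ on $\Omega\backslash\Gamma$, and therefore
$$\langle \psi_f,\mathds{1}_\rho\rangle = (f|\nabla\mathds{1}_\rho)_{L_2(\Omega)} = 0$$
is automatic, with no hypothesis on $f$ needed. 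This is really the only substantive observation in the proof.

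Third, I would invoke the consequences of the spectral decomposition already derived: there exists a solution $q\in Y\subset H^1_p(\Omega\backslash\Gamma)$ of $Aq = \psi_f$, unique modulo $N(A)=\K\mathds{1}_\rho$, and the bounded inverse of $A$ on the range yields
$$|\nabla q|_{L_p(\Omega)} \le C\,|\psi_f|_{(H^1_{p'}(\Omega\backslash\Gamma))^*} \le C\,|f|_{L_p(\Omega;\R^n)}.$$
Any two solutions $q_1,q_2$ differ by an element of $N(A)$, i.e.\ by $c\mathds{1}_\rho$, whose gradient vanishes; consequently they represent the same class in $\dot{H}^1_p(\Omega\backslash\Gamma)$, which gives uniqueness in that homogeneous space.

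No serious obstacle is anticipated: the nontrivial analytical content (compactness of the resolvent, discreteness and semisimplicity of the spectrum, identification of $N(A)$, and the Fredholm decomposition of $(H^1_{p'}(\Omega\backslash\Gamma))^*$) is already in hand from the preceding paragraph. The only thing the present lemma contributes is the observation that divergence-type right-hand sides automatically satisfy the single scalar compatibility condition imposed by the one-dimensional kernel, so existence at $\lambda=0$ holds without any restriction on $f\in L_p(\Omega;\R^n)$.
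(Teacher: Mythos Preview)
Your proposal is correct and matches the paper's approach exactly: the lemma is stated without a separate proof because it is the immediate specialization of the preceding spectral discussion to the functional $F=\psi_f$, and the paper later (in the proof of Theorem~\ref{thmtrans}) makes explicit precisely the observation you single out, namely that $\psi_f(\mathds{1}_\rho)=0$ for every $f\in L_p(\Omega;\R^n)$.
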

For the final step, let $v\in H_p^1(\Omega\backslash\Gamma)$ be
the unique solution of
    \begin{align*}
    \lambda_0 (v|\phi)_{L_2(\Omega)}+(\nabla
v|\nabla\phi)_{L_2(\Omega)}&=0,\quad \phi\in
H_{p'}^1(\Omega),\\
    [\![\rho v]\!]&=g,\quad x\in\Gamma,
    \end{align*}
which is well-defined, thanks to Lemma \ref{translem1}. With the
help of this solution $v$, we define a functional $\psi_v\in
\left(H_{p'}^1(\Omega\backslash\Gamma)\right)^*$ by
    $$\psi_v(\phi):=\int_{\Omega}\nabla v\cdot\nabla\phi
dx.$$ By definition it holds that $\psi_v(\mathds{1}_\rho)=0$.
Since also $\psi_f(\mathds{1}_\rho)=0$ for all $f\in
L_p(\Omega;\rr^n)$, Lemma \ref{translem2} yields a unique
solution $w\in \dot{H}_p^1(\Omega\backslash\Gamma)$ of
    \begin{align*}
    (\nabla w|\nabla\phi)_{L_2(\Omega)}&=\psi_f(\phi)-\psi_v(\phi),\quad
\phi\in H_{p'}^1(\Omega),\\
    [\![\rho w]\!]&=0,\quad x\in\Gamma.
    \end{align*}
Finally, the sum $q:=v+w\in\dot{H}_p^1(\Omega\backslash\Gamma)$
is the unique solution of
    \begin{align*}
    (\nabla q|\nabla\phi)_{L_2(\Omega)}&=\psi_f(\phi)=(f|\nabla\phi)_{L_2(\Omega)},\quad
\phi\in H_{p'}^1(\Omega),\\
    [\![\rho q]\!]&=g,\quad x\in\Gamma
    \end{align*}
and we have the estimate
    $$|\nabla q|_{L_p(\Omega)}\le
C\left(|f|_{L_p(\Omega;\rr^n)}+|g|_{W_p^{1-1/p}(\Gamma)}\right).$$
\begin{theorem}\label{thmtrans}
Let $1<p<\infty$, $1/p+1/p'=1$, $f\in L_p(\Omega;\rr^n)$ and
$g\in W_p^{1-1/p}(\Gamma)$ be given. Then the problem
    \begin{align*}
    (\nabla q|\nabla\phi)_{L_2(\Omega)}&=(f|\nabla\phi)_{L_2(\Omega)},\quad
\phi\in H_{p'}^1(\Omega),\\
    [\![\rho q]\!]&=g,\quad x\in\Gamma
    \end{align*}
has a unique solution $u\in \dot{H}_p^1(\Omega\backslash\Gamma)$
satisfying the estimate
    $$|\nabla q|_{L_p(\Omega)}\le
C_1\left(|f|_{L_p(\Omega;\rr^n)}+|g|_{W_p^{1-1/p}(\Gamma)}\right).$$
If $J=[0,a]$, $f=f(t,x)$, $f\in H_p^1(J;L_p(\Omega;\R^n))$, $g=0$, then $q\in H_p^1(J;\dot{H}_p^1(\Omega\backslash\Gamma))$ and
    $$||\nabla q||_{H_p^1(J;L_p(\Omega))}\le
C_2||f||_{H_p^1(J;L_p(\Omega;\rr^n))}.$$
\end{theorem}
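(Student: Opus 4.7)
The plan is to decompose the problem via linearity: one piece carries the jump $g$ with no volumetric source, and the other carries $f$ with vanishing jump. Each piece is handled by one of the preparatory lemmas, and the two solutions are then superposed.

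Concretely, I would first fix a $\lambda_0 \geq 1$ as supplied by Lemma \ref{translem1} and use that lemma to produce $v \in H^1_p(\Omega\backslash\Gamma)$ with $[\![\rho v]\!] = g$ solving the shifted variational identity $\lambda_0(v|\phi)_{L_2(\Omega)} + (\nabla v|\nabla\phi)_{L_2(\Omega)} = 0$ for every $\phi \in H^1_{p'}(\Omega)$, together with the bound $|v|_{H^1_p(\Omega)} \leq C|g|_{W^{1-1/p}_p(\Gamma)}$. Writing $q = v + w$ reduces the problem to finding $w$ with $[\![\rho w]\!] = 0$ satisfying
\[
(\nabla w|\nabla\phi)_{L_2(\Omega)} = (f|\nabla\phi)_{L_2(\Omega)} - (\nabla v|\nabla\phi)_{L_2(\Omega)}, \quad \phi \in H^1_{p'}(\Omega).
\]
The right-hand side is of the form $(\tilde f|\nabla\phi)_{L_2(\Omega)}$ with $\tilde f := f - \nabla v \in L_p(\Omega;\R^n)$ (interpreting $\nabla v$ pointwise on $\Omega\backslash\Gamma$), so Lemma \ref{translem2} provides the required $w \in \dot H^1_p(\Omega\backslash\Gamma)$ together with the bound $|\nabla w|_{L_p(\Omega)} \leq C(|f|_{L_p(\Omega)} + |\nabla v|_{L_p(\Omega)})$. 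Summing the two estimates yields the claimed a priori inequality for $q = v + w$. Uniqueness then follows by applying Lemma \ref{translem2} to the difference of two solutions, which satisfies the homogeneous problem in $\dot H^1_p(\Omega\backslash\Gamma)$ and hence vanishes there.

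For the time-dependent statement with $g = 0$, I would exploit the fact that the stationary result yields a bounded linear solution operator $T : L_p(\Omega;\R^n) \to \dot H^1_p(\Omega\backslash\Gamma)$ which is time-independent. Setting $q(t,\cdot) := T f(t,\cdot)$ gives the $L_p(J;\dot H^1_p)$ bound immediately from the stationary estimate; since $T$ commutes with $\partial_t$, differentiation in time yields $\partial_t q = T(\partial_t f)$ with the matching $L_p(J;\dot H^1_p)$ bound, and the two bounds combine to place $q$ in $H^1_p(J;\dot H^1_p(\Omega\backslash\Gamma))$ with the stated norm inequality.

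The main difficulty is really located in the preparatory lemmas rather than in the theorem itself: Lemma \ref{translem1} requires the delicate localization argument with half-space resolvent estimates uniform in $\lambda \geq 1$ to accommodate the jump data, while Lemma \ref{translem2} rests on a spectral analysis identifying the one-dimensional kernel spanned by $\mathds{1}_\rho$. Given those, the only compatibility point to verify here is that the reduced functional $\phi \mapsto (\tilde f|\nabla\phi)_{L_2(\Omega)}$ annihilates $\mathds{1}_\rho$, which is immediate since $\nabla\mathds{1}_\rho = 0$ almost everywhere on $\Omega\backslash\Gamma$; after that, the theorem is a clean superposition of the two lemmas.
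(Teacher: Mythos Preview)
Your proposal is correct and follows essentially the same route as the paper: the paper also fixes $\lambda_0$ from Lemma~\ref{translem1} to obtain $v$ carrying the jump $g$, then applies Lemma~\ref{translem2} to the residual functional $\psi_f-\psi_v$ (your $\tilde f=f-\nabla v$) after noting it annihilates $\mathds{1}_\rho$, and sets $q=v+w$. The time-dependent assertion is likewise handled by applying the bounded linear solution operator pointwise in $t$ and differentiating, exactly as you describe.
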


\subsection{Higher regularity in the bulk phases}

The next problem we consider, is about higher
regularity in the bulk phases $\Omega\backslash\Gamma$. To be precise, we study
the elliptic transmission problem
    \begin{align}
    \begin{split}\label{trans15}
    \lambda q-\Delta  q&=f,\quad x\in\Omega\backslash\Gamma,\\
    [\![\rho q]\!]&=0,\quad x\in\Gamma,\\
    [\![\partial_{\nu_\Gamma}  q]\!]&=0,\quad x\in\Gamma,\\
    \delta\partial_{\nu_{\Omega}} q+(1-\delta) q&=0,\quad x\in\partial\Omega,\ \delta\in\{0,1\},
    \end{split}
    \end{align}
where $f\in L_p(\Omega)\cap W_p^s(\Omega\backslash\Gamma)$,
$s>0$, is given and $\lambda\ge 1$. It is our aim to find a
unique solution $ q\in W_p^{2+s}(\Omega\backslash\Gamma)$ of \eqref{trans15}. Note that
by Theorem \ref{thmtrans0} there exists a unique solution $ q\in
H_p^2(\Omega\backslash\Gamma)$ of \eqref{trans15}. Moreover, there exists a
constant $C>0$ being independent of $\lambda\ge 1$ such that the
estimate
    \begin{equation}\label{trans16}
    | q|_{H_p^2(\Omega\backslash\Gamma)}\le C|f|_{L_p(\Omega)}
    \end{equation}
is valid. Thus, it remains to show that in addition $ q\in W_p^{2+s}(\Omega\backslash\Gamma)$, provided
$f\in L_p(\Omega)\cap W_p^s(\Omega\backslash\Gamma)$. For this purpose let $\partial\Omega\in C^3$ and cover
the compact set $\bar{\Omega}$ by a union of finitely many open
sets $U_k,\ k=0,\ldots,N$ which are subject to the following
conditions
    \begin{itemize}
        \item $\partial\Omega\subset U_0$ and $U_0\cap\Gamma=\emptyset$;
        \item $U_1\subset\Omega_1$ and $U_1\cap\Gamma=\emptyset$;
        \item $U_k\cap\Gamma\neq\emptyset$,
$U_k\cap\partial\Omega=\emptyset\ k=2,\ldots,N$ and
            $$\bigcup_{k=2}^N U_k\supset \Gamma.$$
    \end{itemize}
For $k\ge 2$, the sets $U_k$ may be balls with a fixed but
arbitrarily small radius $r>0$. As before, let
$\{\varphi_k\}_{k=0}^N$ be a partition of unity, such that ${\rm
supp}\,\varphi_k\subset U_k$ and $0\le\varphi_k(x)\le 1$ for all
$x\in\bar{\Omega}$. Let $ q_k:= q\varphi_k$ and
$f_k:=f\varphi_{k}$.

Multiplying \eqref{trans15} by $\varphi_0$ yields the problem
    \begin{align}
    \begin{split}\label{trans17}
    \lambda q_0-\Delta  q_0&=f_0-2(\nabla q|\nabla\varphi_0)- q\Delta\varphi_0,\quad x\in \Omega,\\
    \delta\partial_{\nu_{\Omega}} q_0+(1-\delta) q_0&=\delta q\partial_{\nu_{\Omega}}\varphi_0,\quad x\in\partial\Omega,\ \delta\in\{0,1\}.
    \end{split}
    \end{align}
Since $\varphi_0$ is smooth and $ q\in H_p^2(\Omega)$, the
right hand side $(F_0,G_0)$ in \eqref{trans17} is in
$W_p^s(\Omega)\times W_p^{1+s-1/p}(\partial\Omega)$, at least for $s\in(0,1]$. It follows from
\cite[Theorem 5.5.1 \& Remark 5.5.2/2]{Tri95} that $ q_0\in
W_p^{2+s}(\Omega)$, $s\in [0,1]$ and
    $$| q_0|_{W_p^{2+s}(\Omega)}\le C(|F_0|_{W_p^s(\Omega)}+|G_0|_{W_p^{1+s-1/p}(\partial\Omega)})\le C\left(|f|_{W_p^{s}(\Omega\backslash\Gamma)}+|f|_{L_p(\Omega)}\right),$$
by \eqref{trans16}, where the constant $C>0$ does not depend on
$\lambda\ge 1$. Multiplying \eqref{trans15} by $\varphi_1$ we
obtain the full space problem
    \begin{equation}\label{trans18}
    \lambda q_1-\Delta  q_1=f_1-2(\nabla q|\nabla\varphi_1)- q\Delta\varphi_1,\quad x\in \rr^n,
    \end{equation}
with a right hand side in $W_p^s(\rr^n)$, $s\in(0,1]$, which we denote by
$F_1$. Then the solution of \eqref{trans18} is given by
    $$ q_1=(\lambda-\Delta)^{-1}F_1.$$
If $\alpha\in\{0,1\}$ and $F_1\in H_p^\alpha(\rr^n)$ then
$ q_1\in H_p^{2+\alpha}(\rr^n)$ and
    \begin{align*}
    | q_1|_{H_p^{2+\alpha}(\rr^n)}&\le C|(I-\Delta)^{1+\alpha/2} q_1|_{L_p(\rr^n)}=C|(I-\Delta)^{1+\alpha/2}(\lambda-\Delta)^{-1}F_1|_{L_p(\rr^n)}\\
    &=C|(I-\Delta)(\lambda-\Delta)^{-1}(I-\Delta)^{\alpha/2}F_1|_{L_p(\rr^n)}\\
    &\le C||(I-\Delta)(\lambda-\Delta)^{-1}||_{\cB(L_p,L_p)}|(I-\Delta)^{\alpha/2}F_1|_{L_p(\rr^n)}\\
    &\le C||(I-\Delta)(\lambda-\Delta)^{-1}||_{\cB(L_p,L_p)}|F_1|_{H_p^\alpha(\rr^n)},
    \end{align*}
since $|(I-\Delta)^{1+\alpha/2}\cdot|_{L_p(\rr^n)}$ is an
equivalent norm in $H_p^{2+\alpha}(\rr^n)$, $\alpha\in \{0,1\}$.
Note that the term
    $$||(I-\Delta)(\lambda-\Delta)^{-1}||_{\cB(L_p,L_p)}$$
is independent of $\lambda\ge 1$, which follows e.g.\ from
functional calculus. The real interpolation method and
\eqref{trans16} then yield the estimate
    $$| q_1|_{W_p^{2+s}(\rr^n)}\le C|F_1|_{W_p^s(\rr^n)}\le C\left(|f|_{W_p^{s}(\Omega\backslash\Gamma)}+|f|_{L_p(\Omega)}\right),$$
for $s\in(0,1]$, where $C>0$ does not depend on $\lambda\ge 1$.
Next, we multiply \eqref{trans15} by $\varphi_k$,
$k\in\{2,\ldots,N\}$, to obtain the pure transmission problems
    \begin{align}
    \begin{split}\label{trans21}
    \lambda q_k-\Delta  q_k&=f_k-2(\nabla q|\nabla\varphi_k)- q\Delta\varphi_k,\quad x\in \rr^n\backslash\Gamma,\\
    [\![\rho q_k]\!]&=0,\quad x\in\Gamma,\\
    [\![\partial_\nu  q_k]\!]&=[\![ q]\!]\partial_\nu\varphi_k,\quad x\in\Gamma,
    \end{split}
    \end{align}
with some function $f_k\in W_p^s(\rr^n\backslash\Gamma)\cap
L_p(\rr^n)$. For each fixed $k\in\{2,\ldots,N\}$ we may use the
transformation described above, to reduce \eqref{trans21} to the
problem
    \begin{align}
    \begin{split}\label{trans22}
    \lambda\psi-\Delta \psi&=F,\quad x'\in\rr^{n-1},\ y\in\dot{\rr},\\
    [\![\rho\psi]\!]&=0,\quad x'\in\rr^{n-1},\ y=0,\\
    [\![\partial_y \psi]\!]&=G,\quad x'\in\rr^{n-1},\ y=0,
    \end{split}
    \end{align}
with given functions $F\in W_p^{s}(\dot{\rr}^n)$ and $G\in
W_p^{1+s-1/p}(\rr^{n-1})$, $s\in(0,1]$. First we remove the
inhomogeneity $F$. To this end we solve the Dirichlet problems
    $$\lambda\psi-\Delta_{x'}\psi^+-\partial_y^2\psi^+=F^+,\ x'\in\rr^{n-1},\ y>0,\quad \psi^+(x',0)=0,$$
and
    $$\lambda\psi-\Delta_{x'}\psi^--\partial_y^2\psi^-=F^-,\ x'\in\rr^{n-1},\ y<0,\quad \psi^-(x',0)=0,$$
where $F^+:=F|_{y>0}$ and $F^-:=F|_{y<0}$. Let $\psi^{\pm}\in
W_p^{2+s}(\dot{\rr}^n)$ be defined as
    $$\psi^{\pm}(x',y):=
    \begin{cases}
    \psi^+(x',y),&\ y>0,\\
    \psi^-(x',y),&\ y<0.
    \end{cases}
    $$
Since $\psi^+(x',0)=\psi^-(x',0)=0$ and $[\![\rho\tilde{\psi}]\!]=[\![\rho]\!]\psi^{\pm}=0$, the shifted function
$\tilde{\psi}:=\psi-\psi^{\pm}$ solves the problem
    \begin{align}
    \begin{split}\label{trans23}
    \lambda\tilde{\psi}-\Delta \tilde{\psi}&=0,\quad x'\in\rr^{n-1},\ y\in\dot{\rr},\\
    [\![\rho\tilde{\psi}]\!]&=0,\quad x'\in\rr^{n-1},\ y=0,\\
    [\![\partial_y \tilde{\psi}]\!]&=\tilde{G},\quad x'\in\rr^{n-1},\ y=0,
    \end{split}
    \end{align}
where $\tilde{G}:=G-[\![\partial_y \psi^{\pm}]\!]\in
W_p^{1+s-1/p}(\rr^{n-1})$. According to \eqref{trans2.1} and
\eqref{trans2.2} the unique solution of \eqref{trans23} is given
by
    \begin{equation}\label{trans24}
    \tilde{\psi}(y)=-\frac{1}{\rho_1+\rho_2}L^{-1}
    \begin{cases}
    \rho_1e^{-Ly}\tilde{G},&\ y>0,\\
    \rho_2e^{Ly}\tilde{G},&\ y<0,
    \end{cases}
    \end{equation}
where $L:=(\lambda-\Delta_{x'})^{1/2}$ with domain
$D(L)=H_p^1(\rr^{n-1})$. Assume for a moment that $\tilde{G}\in
W_p^{2-1/p}(\rr^{n-1})$. Then it follows from semigroup theory
and \eqref{trans24} that the solution of \eqref{trans23}
satisfies the estimates
    $$|\tilde{\psi}|_{H_p^3(\dot{\rr}^n)}\le C|\tilde{G}|_{W_p^{2-1/p}(\rr^{n-1})}$$
as well as
    $$|\tilde{\psi}|_{H_p^2(\dot{\rr}^n)}\le C|\tilde{G}|_{W_p^{1-1/p}(\rr^{n-1})},$$
where the constant $C>0$ does not depend on $\lambda\ge 1$. This
can be seen as in the proof of Lemma \ref{translem1}. Applying
the real interpolation method yields
    $$|\tilde{\psi}|_{W_p^{2+s}(\dot{\rr}^n)}\le C|\tilde{G}|_{W_p^{1+s-1/p}(\rr^{n-1})},$$
for some $s\in(0,1]$ and if $\tilde{G}\in
W_p^{1+s-1/p}(\rr^{n-1})$. We have thus shown that the
transmission problem \eqref{trans22} has a unique solution
$\psi\in W_p^{2+s}(\dot{\rr}^{n})$ if and only if $F\in
W_p^s(\dot{\rr}^n)$ and $G\in W_p^{1+s-1/p}(\rr^{n-1})$. By
perturbation theory, there exists $\lambda_0\ge 1$ such that
\eqref{trans21} has a unique solution $ q_k\in W_p^{2+s}(\rr^n\backslash\Gamma)$, $s\in (0,1]$, satisfying the estimate
    \begin{multline*}
    | q_k|_{W_p^{2+s}(\rr^n\backslash\Gamma)}\le C\Big(|f_k|_{W_p^s(\rr^n\backslash\Gamma)}+|(\nabla q|\nabla\varphi_k)|_{W_p^s(\rr^n\backslash\Gamma)}
    +| q\Delta\varphi_k|_{W_p^s(\rr^n\backslash\Gamma)}\\
    +|[\![ q]\!]\partial_\nu\varphi_k|_{W_p^{s+1-1/p}(\Gamma)}\Big),
    \end{multline*}
provided $\lambda\ge \lambda_0$. By the smoothness of $\varphi_k$
and by \eqref{trans16} we obtain the estimate
    \begin{align*}
    | q_k|_{W_p^{2+s}(\rr^n\backslash\Gamma)}\le C\left(|f|_{W_p^s(\Omega\backslash\Gamma)}+| q|_{W_p^{1+s}(\Omega)}\right)
    \le C\left(|f|_{W_p^s(\Omega\backslash\Gamma)}+|f|_{L_p(\Omega)}\right),
    \end{align*}
valid for all $k\in \{2,\ldots,N\}$ and $s\in(0,1]$. Since
$\{\varphi_k\}_{k=0}^N$ is a partition of unity, we obtain
    $$| q|_{W_p^{2+s}(\Omega\backslash\Gamma)}\le\sum_{k=0}^N| q_k|_{W_p^{2+s}(\Omega\backslash\Gamma)}\le C\left(|f|_{W_p^s(\Omega\backslash\Gamma)}+|f|_{L_p(\Omega)}\right),$$
showing that $ q\in W_p^{2+s}(\Omega\backslash\Gamma)$, $s\in (0,1]$.
It is easy to extend this result to the case $\lambda\in[0,\lambda_0)$. To this end, let $f\in L_p(\Omega)\cap W_p^s(\Omega\backslash\Gamma)\cap R(A_\delta)$, $s>0$, where $A_\delta:H_p^2(\Omega\backslash\Gamma)\to L_p(\Omega)$ was defined at the beginning of Section 3.
Note that $R(A_\delta)=\{f\in L_p(\Omega):(f|\mathds{1}_\rho)_2=0\}$ if $\delta=1$ and $\lambda=0$ and $R(A_\delta)=L_p(\Omega)$ if either $\delta=0$ and $\lambda\ge 0$ or $\delta=1$ and $\lambda>0$. Consider the solution $ q\in H_p^2(\Omega\backslash\Gamma)$ of \eqref{trans15}
with $\lambda\in [0,\lambda_0)$, which is well-defined thanks to
Theorem \ref{thmtrans0} and which satisfies the estimate
\eqref{trans16}. Rewriting $\eqref{trans15}_1$ as
    $$\lambda_0 q-\Delta q=f+(\lambda_0-\lambda) q,$$
we may regard the new right hand side $f+(\lambda_0-\lambda) q$
as a given function, say $\tilde{f}\in
W_p^s(\Omega\backslash\Gamma)$, $s\in(0,1]$. The above result for
\eqref{trans15} then yields the estimate
    \begin{align*}
    | q|_{W_p^{2+s}(\Omega\backslash\Gamma)}&\le C\left(|\tilde{f}|_{W_p^s(\Omega\backslash\Gamma)}+|\tilde{f}|_{L_p(\Omega)}\right)\\
    &\le C\left(|f|_{W_p^s(\Omega\backslash\Gamma)}+|f|_{L_p(\Omega)}\right),
    \end{align*}
since
    $$| q|_{W_p^s(\Omega\backslash\Gamma)}=| q|_{W_p^s(\Omega)}\le C| q|_{H_p^2(\Omega)}\le C|f|_{L_p(\Omega)},$$
by the smoothness of $ q$ and by \eqref{trans16}. If $s>1$ and $f\in L_p(\Omega)\cap W_p^s(\Omega\backslash\Gamma)$, then $ q\in H_p^3(\Omega\backslash\Gamma)$, since $f\in L_p(\Omega)\cap H_p^1(\Omega\backslash\Gamma)$. This additional regularity for $ q$ and the preceding steps allow us to conclude that $ q\in W_p^{2+s}(\Omega\backslash\Gamma)$, at least for $s\in[1,2]$. By an obvious argument it follows that $ q\in W_p^{2+s}(\Omega\backslash\Gamma)$ for each fixed $s>0$, provided $f\in L_p(\Omega)\cap W_p^{s}(\Omega\backslash\Gamma)$.
This yields the following result.
\begin{theorem}\label{thmtrans2}
Let $\Omega\subset\R^n$ be a bounded domain with boundary $\partial\Omega\in C^{2+s}$, let $1<p<\infty$, $s>0$ and $f\in L_p(\Omega)\cap
W_p^s(\Omega\backslash\Gamma)$. Then the following assertions hold.
    \begin{enumerate}
    \item If $\delta=1$ and $\lambda=0$, then there exists a unique solution $q\in W_p^{2+s}(\Omega\backslash\Gamma)\ominus\mathbb{K}\mathds{1}_\rho$ of \eqref{trans15}, provided that $(f|\mathds{1}_\rho)=0$.
    \item If either $\delta=1$ and $\lambda>0$ or $\delta=0$ and $\lambda\ge 0$, then there exists a unique solution $q\in W_p^{2+s}(\Omega\backslash\Gamma)$ of \eqref{trans15}.
    \end{enumerate}
If in addition $J=[0,a]$, $f=f(t,x)$ and $f\in H_p^1(J;L_p(\Omega)\cap W_p^{2+s}(\Omega\backslash\Gamma))$ s.t.\ $f(t,\cdot)\in R(A_\delta)$ for a.e.\ $t\in J$, then $q\in H_p^1(J;W_p^{2+s}(\Omega\backslash\Gamma)\ominus N(A_\delta))$.
\end{theorem}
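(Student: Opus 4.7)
The strategy is to start from the $H^2_p$-solution already provided by Theorem~\ref{thmtrans0.1} and to upgrade its regularity by a standard localisation argument. Fix a partition of unity $\{\varphi_k\}_{k=0}^N$ subordinate to a covering $\{U_k\}$ of $\bar\Omega$ consisting of one boundary chart $U_0$ (with $U_0\cap\Gamma=\emptyset$), one strictly interior chart $U_1\subset\Omega_1$ (with $U_1\cap\Gamma=\emptyset$), and finitely many interface charts $U_k$, $k=2,\dots,N$, which cover $\Gamma$ and which can be chosen to be small balls; set $q_k=q\varphi_k$. Multiplying \eqref{trans15} by $\varphi_k$ yields a decoupled system of three prototype problems: an elliptic boundary value problem for $q_0$ on $\Omega$, a Poisson problem for $q_1$ on $\R^n$, and transmission problems for $q_k$ ($k\geq 2$) on $\R^n\setminus\Gamma$, in each case with a right-hand side which involves commutators of $\Delta$ with $\varphi_k$ acting on $q$. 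I would first restrict attention to $s\in(0,1]$ and to $\lambda\geq\lambda_0$ for some $\lambda_0\geq 1$ chosen below, since these are the estimates where constants can be made uniform in $\lambda$.

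For the boundary chart one applies classical elliptic regularity (e.g.\ \cite[Thm.~5.5.1]{Tri95}) to obtain $q_0\in W^{2+s}_p(\Omega)$ with the right bound, using only that the commutator data sit in $W^s_p(\Omega)\times W^{1+s-1/p}_p(\partial\Omega)$, which follows from the a priori bound \eqref{trans16}. For the interior chart the solution is $q_1=(\lambda-\Delta)^{-1}F_1$ on $\R^n$, and the estimate $\|(I-\Delta)(\lambda-\Delta)^{-1}\|_{\cB(L_p)}\leq C$ uniformly in $\lambda\geq 1$, obtained from the bounded $H^\infty$-calculus of $-\Delta$, combined with real interpolation between $\alpha=0$ and $\alpha=1$, yields the desired $W^{2+s}_p$-bound. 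The main obstacle is the interface charts: here I would flatten $\Gamma$ locally by the graph transform used earlier in the paper, which reduces the problem to the model problem \eqref{trans22}. Splitting off the inhomogeneity via two Dirichlet problems on the half-spaces $\{y>0\}$ and $\{y<0\}$ reduces matters to the pure two-side problem \eqref{trans23}, whose explicit solution \eqref{trans24} in terms of $L=(\lambda-\Delta_{x'})^{1/2}$ provides, by semigroup theory, uniform estimates in $H^2_p$ and $H^3_p$; real interpolation then gives the $W^{2+s}_p$-estimate. A perturbation argument (using the smallness of tangential derivatives of the chart graph) then transfers the result to $q_k$ and produces $\lambda_0\geq 1$ such that surjectivity and the a priori estimate hold on each chart.

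Summing over the partition of unity yields, for $\lambda\geq\lambda_0$ and $s\in(0,1]$,
\begin{equation*}
|q|_{W^{2+s}_p(\Omega\setminus\Gamma)}\leq C\big(|f|_{W^s_p(\Omega\setminus\Gamma)}+|f|_{L_p(\Omega)}\big).
\end{equation*}
To cover $\lambda\in[0,\lambda_0)$, rewrite $\eqref{trans15}_1$ as $\lambda_0 q-\Delta q=f+(\lambda_0-\lambda)q$ and apply the previous step with the right-hand side $\tilde f:=f+(\lambda_0-\lambda)q$; the $H^2_p$-bound from Theorem~\ref{thmtrans0.1} controls $q$ in $W^s_p$ when $s\leq 1$, closing the argument. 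In cases where $A_\delta$ has a non-trivial kernel (i.e.\ $\delta=1$, $\lambda=0$), one solves in $R(A_\delta)$ under the compatibility condition $(f|\mathds{1}_\rho)=0$ and obtains $q\in W^{2+s}_p(\Omega\setminus\Gamma)\ominus\K\mathds{1}_\rho$. To reach $s>1$ I would bootstrap: the preceding step with $s=1$ places $q$ in $H^3_p(\Omega\setminus\Gamma)$, which in turn upgrades the commutator data, so the three prototype arguments can be repeated with $s\in[1,2]$, and a finite iteration covers any fixed $s>0$, provided $\partial\Omega\in C^{2+s}$ so that the boundary-chart argument remains valid.

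For the final time-dependent statement, the stationary result already provides, for each $t\in J$, a unique solution $q(t,\cdot)$ with the required spatial regularity and a bound linear in the $W^{2+s}_p\cap L_p$-norm of $f(t,\cdot)$. Differentiating \eqref{trans15} in $t$ and invoking uniqueness of the stationary problem (applied to $\partial_t q$ with data $\partial_t f$), exactly as in the second part of Theorem~\ref{thmtrans0}, yields $q\in H^1_p(J;W^{2+s}_p(\Omega\setminus\Gamma)\ominus N(A_\delta))$ with the corresponding continuity estimate, completing the proof.
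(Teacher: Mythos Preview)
Your proposal is correct and follows essentially the same route as the paper's own argument: the same localisation with boundary, interior and interface charts, the same model-problem analysis via \eqref{trans22}--\eqref{trans24} with real interpolation, the same shift $\lambda_0 q-\Delta q=f+(\lambda_0-\lambda)q$ to cover small $\lambda$, and the same bootstrap from $s\in(0,1]$ to arbitrary $s>0$. The only cosmetic difference is that the paper invokes Theorem~\ref{thmtrans0} (rather than Theorem~\ref{thmtrans0.1}) for the initial $H^2_p$-estimate \eqref{trans16}, which is the correct reference for $\lambda\ge 1$; your citation of Theorem~\ref{thmtrans0.1} is more naturally reserved for the step where $\lambda=0$.
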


\bibliographystyle{plain}

\begin{thebibliography}{99}

\bibitem{Ama93} H. Amann,
{\it Nonhomogeneous linear and quasilinear elliptic and parabolic
boundary value problems}, in Schmeisser/Triebel: "Function Spaces,
Differential Operators and Nonlinear Analysis", Teubner-Texte zur
Mathematik {\bf 133}, Leipzig (1993).

\bibitem{Bea84} J.T. Beale,
Large-time regularity of viscous surface waves.
{\em Arch. Rational Mech. Anal.} {\bf 84}, (1983/84), 304--352.



\bibitem{BP07} D.~Bothe and J.~Pr\"uss, On the two-phase Navier-Stokes equation with Bousinesq-Scriven surface fluid.
{\em J.~Math.~Fluid Mech.} {\bf 11}, 1--18 (2008)

\bibitem{BP08} D.~Bothe and J.~Pr\"uss,
Stability of equilibria for  two-phase flows with soluble surfactant.
submitted (2008).

\bibitem{Deni91} I. V. Denisova,
A priori estimates for the solution of the linear nonstationary problem connected
with the motion of a drop in a liquid medium. (Russian)
{\em Trudy Mat. Inst. Steklov} {\bf 188}  (1990), 3--21.
Translated in {\em Proc. Steklov Inst. Math.} 1991, no. 3, 1--24.

   \bibitem{Deni93}  I.V. Denisova,
  Solvability in H\"older spaces of a linear problem on the motion of
   two fluids separated by a closed surface. (Russian)  {\em Algebra i Analiz} {\bf 5}  (1993),  no. 4, 122--148.
   Translation in  St. Petersburg Math. J. {\bf 5}  (1994),  no. 4, 765--787.


\bibitem{Deni94} I.V. Denisova,
Problem of the motion of two viscous incompressible
fluids separated by a closed free interface.
Mathematical problems for Navier-Stokes equations (Centro, 1993).
{\em Acta Appl. Math.} {\bf 37}  (1994), 31--40.



  \bibitem{DS91} I.V. Denisova, V.A. Solonnikov,
  Solvability in H\"older spaces of a model initial-boundary value
  problem generated by a problem on the motion of two fluids.  (Russian. English summary)
   {\em Zap. Nauchn. Sem. Leningrad. Otdel. Mat. Inst. Steklov. (LOMI)} {\bf 188}  (1991), 5--44.
   Translation in
   {\em J. Math. Sci.} {\bf 70} (1994), no. 3, 1717--1746.

\bibitem{DS95} I.V. Denisova, V.A. Solonnikov,
Classical solvability of the problem of the motion of two viscous incompressible fluids. (Russian)
{\em Algebra i Analiz} {\bf 7} (1995), no. 5, 101--142.
Translation in {\em St. Petersburg Math. J.} {\bf 7} (1996), no. 5, 755--786.


\bibitem{DS02} I.V. Denisova, V.A. Solonnikov,
Classical solvability of the problem of the motion of an
isolated mass of compressible fluid. (Russian)  {\em Algebra i Analiz} {\bf 14}  (2002),  no. 1, 71--98.
Translation in  {\em St. Petersburg Math. J.} {\bf 14}  (2003),  no. 1, 53--74.

\bibitem{DHP03} R. Denk, M. Hieber, and J. Pr\"uss,
{\it ${\mathcal R}$-boundedness, Fourier multipliers, and problems of
elliptic and parabolic type}, AMS Memoirs {\bf 788}, Providence, R.I. (2003).


\bibitem{DHP07}R. Denk, M. Hieber, and J.Pr\"uss,
Optimal $L\sp p$-$L\sp q$-estimates for parabolic boundary value problems with inhomogeneous data.
{\em Math. Z.} {\bf 257}  (2007),  no. 1, 193--224.




\bibitem{EPS03a}
J. Escher, J.  Pr\"uss, G.  Simonett,
A new approach to the regularity of solutions for parabolic equations.
{\it Evolution Equations: Proceedings in Honor of J.A. Goldstein's 60th Birthday, (eds. G.R. Goldstein, R. Nagel, S. Romanelli), Marcel Dekker, New York}, 167--190  (2003).

\bibitem{EPS03b}
J. Escher, J.  Pr\"uss, G.  Simonett,
Analytic solutions for a Stefan problem with Gibbs-Thomson correction.
{\it J. Reine Angew. Math.}  {\bf 563}, 1--52  (2003).

\bibitem{PrWi09} M.~K\"ohne, J.~Pr\"uss, M.~Wilke,
\newblock On quasilinear parabolic evolution equations in weighted $L_p$-spaces.
\newblock To appear in \textit{J. Evol. Equ.} (2010)






\bibitem{Pru03} J.~Pr\"uss, Maximal regularity for evolution equations in $L_p$-spaces.\\
      {\em Conf. Sem. Mat. Univ. Bari} {\bf 285}, 1--39 (2003)

\bibitem{PrSi04} J.~Pr\"uss, G.~Simonett, Maximal regularity for evolution equations in weighted $L_p$-spaces.\\
      {\em Archiv Math.} {\bf 82}, 415--431 (2004)


\bibitem{PrSi09} J.~Pr\"uss, G.~Simonett,
\newblock Analysis of the boundary symbol for the two-phase Navier-Stokes equations with surface tension.
\newblock {\em Banach Center Publications }{\bf 86}, 265--285 (2009).

\bibitem{PrSi09a} J.~Pr\"uss, G.~Simonett,
\newblock On the two-phase Navier-Stokes equations with surface tension.
\newblock Submitted.

\bibitem{PrSi09b} J.~Pr\"uss, G.~Simonett,
\newblock Analytic solutions for the  two-phase Navier-Stokes equations with surface tension.
\newblock To appear (2010).

\bibitem{PrSi09c} J.~Pr\"uss, G.~Simonett,
\newblock On the Rayleigh-Taylor instability for the two-phase Navier-Stokes equations.
\newblock Submitted.

\bibitem{PSZ09} J.~Pr\"uss,  G.~Simonett, R.~Zacher,
On the onvergence to equilibria of solutions for quasilinear parabolic problems.
{\em J.~Differential Equations} {\bf 246}, 3902--3931 (2009)

\bibitem{ShSh07} Y. Shibata, S. Shimizu,
On a free boundary value problem for the Navier-Stokes equations.
{\em Differential Integral Equations} {\bf 20}  (2007), 241--276.

\bibitem{ShSh07a} Y. Shibata, S. Shimizu,
Free boundary value problems for a viscous incompressible fluid.
{\em Kyoto Conference on the Navier-Stokes Equations and their Applications},
356--358,  RIMS Kokyuroku Bessatsu, B1, Res. Inst. Math. Sci. (RIMS), Kyoto, 2007.

\bibitem{ShSh09} Y. Shibata, S. Shimizu,
Resolvent estimates and maximal regularity of the interface problem for the Stokes system in abounded domain.
{\em Preprint} 2009.





%







\bibitem{Tanaka93b} N. Tanaka,
Global existence of two phase nonhomogeneous viscous incompressible fluid flow.
{\em Comm. Partial Differential Equations} {\bf 18} (1993), 41--81.

\bibitem{Tanaka93} N. Tanaka,
Two-phase free boundary problem for viscous incompressible
thermo-capillary convection.
{\em Japan J. Mech.} {\bf 21} (1995), 1--41.




\bibitem{TT95} A. Tani, N. Tanaka,
Large-time existence of surface waves in incompressible viscous fluids
with or without surface tension.
{\em Arch. Rat. Mech. Anal.} {\bf 130} (1995), 303--304.


\bibitem{Tri83} H. Triebel,
{\it Theory of function spaces}, Monographs in Mathematics, Birkh\"a¤user,
Basel (1983).

\bibitem{Tri95} H. Triebel,
{\it Interpolation Theory, Function spaces, Differential
Operators}, 2nd edition, Barth, Heidelberg, Leipzig (1995).

\end{thebibliography}

\end{document}